\setlist[enumerate]{label=(\alph*)}
\newcommand{\Z}{\mathbb{Z}}
\newcommand{\R}{\mathbb{R}}
\newcommand{\Q}{\mathbb{Q}}
\newcommand{\N}{\mathbb{N}}
\newcommand{\Hom}{\operatorname{Hom}}
\newcommand{\Aut}{\operatorname{Aut}}
\newcommand{\tr}{\operatorname{tr}}
\newcommand{\rk}{\operatorname{rk}}
\newcommand{\inv}{^{-1}}
\newcommand{\End}{\operatorname{End}}
\newcommand{\Res}{\operatorname{Res}}
\newcommand{\Ind}{\operatorname{Ind}}
\newcommand{\Inf}{\operatorname{Inf}}
\newcommand{\id}{\operatorname{id}}
\newcommand{\im}{\operatorname{im}}
\newcommand{\calB}{\mathcal{B}}
\newcommand{\calC}{\mathcal{C}}
\newcommand{\calE}{\mathcal{E}}
\newcommand{\calF}{\mathcal{F}}
\newcommand{\calH}{\mathcal{H}}
\newcommand{\calM}{\mathcal{M}}
\newcommand{\calO}{\mathcal{O}}
\newcommand{\calP}{\mathcal{P}}
\newcommand{\calT}{\mathcal{T}}
\newcommand{\calX}{\mathcal{X}}
\newcommand{\Syl}{\operatorname{Syl}}
\newcommand{\triv}{\mathbf{triv}}
\newcommand{\catmod}{\mathbf{mod}}
\newcommand{\Dim}{\operatorname{Dim}}
\newtheorem{theorem}{Theorem}[section]
\newtheorem{lemma}[theorem]{Lemma}
\newtheorem{prop}[theorem]{Proposition}
\newtheorem{corollary}[theorem]{Corollary}
\newtheorem*{theorem*}{Theorem}
\theoremstyle{remark}
\newtheorem{remark}[theorem]{Remark}
\theoremstyle{definition}
\newtheorem{definition}[theorem]{Definition}
\begin{document}
    \title{The classification of endotrivial complexes}
    \author{Sam K. Miller}
    \address{university of California, Santa Cruz, Department of Mathematics} 
    \email{sakmille@ucsc.edu} 
    \subjclass[2010]{20J05, 19A22, 20C05, 20C20} 
    \keywords{Endotrivial, invertible, permutation modules, Dade group, Bouc homomorphism, splendid Rickard equivalence, $p$-permutation equivalence, biset functor, Picard group} 

    \begin{abstract}
        Let $G$ be a finite group and $k$ a field of prime characteristic $p$. We give a complete classification of endotrivial complexes, i.e. determine the Picard group $\calE_k(G)$ of the tensor-triangulated category $K^b({}_{kG}\triv)$, the bounded homotopy category of $p$-permutation modules, which Balmer and Gallauer recently considered in \cite{BG23}. For $p$-groups, we identify $\calE_k(-)$ with the rational $p$-biset functor $CF_b(-)$ of Borel-Smith functions and recover a short exact sequence of rational $p$-biset functors constructed by Bouc and Yal\c{c}in. As a consequence, we prove that every $p$-permutation autoequivalence of a $p$-group arises from a splendid Rickard autoequivalence. Additionally, we give a positive answer to a question of Gelvin and Yal\c{c}in in \cite{GeYa21}, showing the kernel of the Bouc homomorphism for an arbitrary finite group $G$ is described by superclass functions $f: s_p(G) \to \Z$ satisfying the oriented Artin-Borel-Smith conditions.
    \end{abstract}

    \maketitle

    \section{Introduction}

    Let $G$ be a finite group and $k$ a field of prime characteristic $p$. In \cite{SKM23} we considered the notion of an endotrivial complex of $p$-permutation $kG$-modules, i.e. an invertible object in the tensor-triangulated category $K^b({}_{kG}\triv)$, the bounded homotopy category of finitely generated $p$-permutation $kG$-modules. This tt-category has been studied in great detail recently by Balmer and Gallauer \cite{BaGa23, BG22, BG23}; in particular, the authors deduce its Balmer spectrum. Endotrivial complexes are analogues of endotrivial modules, the invertible objects of the stable module category ${}_{kG}\underline{\catmod}$, which have been studied extensively by representation theorists (see \cite{Ma18} for a detailed account). In \cite{SKM24a}, we introduced notions of relatively endotrivial chain complexes, chain complex-theoretic analogues of relatively endotrivial $kG$-modules, which were first introduced by Lassueur \cite{CL11}.

    Our main result of this paper is a complete classification of endotrivial complexes. We determine the parametrizing group $\calE_k(G)$, i.e. the Picard group of $K^b({}_{kG}\triv)$, see Definition \ref{def:grpofetrivs}.

    \begin{theorem*}[Corollary \ref{thm:borelsmithfornonpgroups}] \label{thm:mainresult}
        Let $G$ be a finite group. We have a split exact sequence of abelian groups \[0 \to \Hom(G,k^\times) \to \calE_k(G) \xrightarrow{h} CF_b(G,p) \to 0.\] Here, $CF_b(G,p)$ denotes the additive group of Borel-Smith superclass functions $f: s_p(G) \to \Z$ valued on $p$-subgroups (see Definition \ref{def:bsf}), and $h$ denotes the h-mark homomorphism (see Proposition \ref{hmarkhoms}). In particular, if $G$ is a $p$-group, we have an isomorphism $h: \calE_k(G)\cong CF_b(G)$.
    \end{theorem*}
    
    We apply much of the theory developed in \cite{SKM23, SKM24a} to derive this classification and beyond, answering questions concerning endotrivial complexes, the Bouc homomorphism (see Definition \ref{def:bouchom}), and splendid Rickard equivalences. A key structure in this study is the Dade group $D_k(G)$ of a finite group. First introduced by Dade in \cite{Da78} and \cite{Da78b} for the case when $G$ is a $p$-group, $D_k(G)$ parameterizes the ``capped'' endopermutation $kG$-modules. $D_k(G)$ was generalized to all finite groups $G$ by Linckelmann and Mazza in \cite{LiMa09}, by Lassueur in \cite{CL13}, and by Gelvin and Yal\c{c}in in \cite{GeYa21} via separate techniques. In Lassueur's construction, the generalized Dade group $D_k(G)$ parameterizes the ``strongly capped'' endo-$p$-permutation $kG$-modules. We define the $kG$-module $V(\calF_G)$ as follows: \[V(\calF_G) := \bigoplus_{P \in [s_p(G)\setminus \Syl_p(G)]} k[G/P].\] Then, $D_k(G)$ is realized as the subgroup generated by endo-$p$-permutation modules of the group $T_{V(\calF_G)}(G)$ of $V(\calF_G)$-endotrivial $kG$-modules (see Definition \ref{dadegroupgeneralizations}). These modules are closely tied to $V(\calF_G)$-endosplit-trivial complexes, as defined in \cite{SKM24a}, as a $V(\calF_G)$-endosplit-trivial complex is equivalently a shifted endosplit $p$-permutation resolution of a relatively $V(\calF_G)$-endotrivial module. This observation gives us a short exact sequence, which is detailed in Theorem \ref{kernelofhomology}, connecting endotrivial complexes and the generalized Dade group. If $G$ is a $p$-group, the story simplifies considerably and we recover a short exact sequence constructed by Bouc and Yal\c{c}in in \cite{BoYa06}. 

    \begin{theorem*}[Theorem \ref{bigtheorem}]
        Let $G$ be a $p$-group. We have an isomorphism of short exact sequences of rational $p$-biset functors:

        \begin{figure}[H]
            \centering
            \begin{tikzcd}
                0 \ar[r] & \calE_k(G) \ar[r] \ar[d, "h"] & \calE_k^{V(\calF_G)}(G) \ar[r, "\calH"] \ar[d, "h"] & D^\Omega(G) \ar[r] \ar[d, "="] & 0\\
                0 \ar[r] & CF_b(G) \ar[r]  & CF(G) \ar[r, "\Psi"] & D^\Omega(G) \ar[r] & 0
            \end{tikzcd}
        \end{figure}

        Here, the map $\Psi: CF(G) \to D^\Omega(G)$ is the Bouc homomorphism, see Definition \ref{def:bouchom}.
    \end{theorem*}

    Next, we transport the superclass function and Borel-Smith function biset functor structure to $\calE_k(G)$ and $\calE_k^{V(\calF_G)}(G)$ respectively in Section \ref{sec:bisetstructure}. In particular, we verify that the rational $p$-biset functor structure of $\calE_k(-)$ is what one might expect. Interestingly, the induction operation does not coincide with the usual tensor induction of chain complexes, but rather, the topological norm map (see \cite[Section 4.1]{Be982} or \cite[Section A.4]{HHR16}). This fits in with the topological nature of endotrivial complexes - morally, they arise as the Bredon cohomology of representation spheres. On the other hand, the biset functor structure of $\calE_k^{V(\calF_{(-)})}(-)$ is less well-behaved. 

    Bouc and Yal\c{c}in's short exact sequence above determines the kernel of the Bouc homomorphism $\Psi: CF(G) \to D^\Omega(G)$ when $G$ is a $p$-group. The Bouc homomorphism was generalized to all finite groups in \cite[Theorem 1.4]{GeYa21}, and in general, we have an inclusion $\ker(\Psi)\subseteq CF_b(G,p)$. Gelvin and Yal\c{c}in in \cite{GeYa21} proposed an additional condition, the \textit{oriented Artin condition}, and proved that $CF_{ba^+}(G,p) \subseteq \ker(\Psi) \subseteq CF_b(G,p)$, where $CF_{ba^+}(G,p)$ denotes the subgroup of $CF(G,p)$ consisting of superclass functions satisfying the oriented Artin-Borel-Smith conditions (see Definition \ref{def:orientedartin}). Furthermore, they posed the question of whether in general, $CF_{ba^+}(G,p) = \ker(\Psi)$. We provide a positive answer to this question by rephrasing it into a question regarding a particular class of endotrivial complexes.

    \begin{theorem*}[Theorem \ref{thm:bouchomker}]
        We have $\ker(\Psi) = CF_{ba^+}(G,p)$.
    \end{theorem*}

    The proof relies on the Lefschetz homomorphism $\Lambda: \calE_k(G) \to O(T(kG))$ induced by taking the Lefschetz invariant (also called the Euler characteristic) of a chain complex (i.e. an alternating sum of its terms), and an equivalent formulation of the trivial source ring $T(kG)$, the Grothendieck group of $K^b({}_{kG}\triv)$ as a tensor-triangulated category, developed by Boltje and Carman in \cite{BC23}. In \cite{SKM23}, we determined that the Lefschetz homomorphism is not surjective in general for non-$p$-groups. In particular, the image of $\Lambda$ satisfies a certain Galois invariance condition. However, the question remained open for $p$-groups. Using a classical result of Tornehave, we determine that indeed, $\Lambda$ is surjective for any $p$-group. 

    \begin{theorem*}[Theorem \ref{thm:surjetivityoflefschetz}]
        Let $G$ be a $p$-group. $\Lambda: \calE_k(G) \to O(T(kG))$ is surjective.
    \end{theorem*}

    We obtain as a rather striking and surprising corollary that if $G$ is a $p$-group, every $p$-permutation autoequivalence of the block algebra $kG$ is induced by a splendid Rickard autoequivalence of $kG$. 

    \begin{theorem*}[Theorem \ref{thm:ppermsliftforpgroups}]
        Let $G$ be a $p$-group and let $\gamma \in O(T^\Delta(kG,kG))$ be a $p$-permutation autoequivalence of $kG$. There exists a splendid Rickard equivalence $\Gamma$ inducing $\gamma$, i.e. we have $\Lambda(\Gamma)=\gamma.$
    \end{theorem*}

    Finally, we remark that the classification of endotrivial complexes should have significant implications for our understanding of the geometry of the tensor-triangulated category $K^b({}_{kG}\triv)$. Moreover, by the equivalences of categories outlined in \cite{BG23b}, Corollary \ref{thm:borelsmithfornonpgroups} also provides the classification of invertible objects in certain categories of Artin motives and cohomological Mackey functors. We describe one such possibility the classification opens: in \cite[Section 3]{BG23}, a ``twisted cohomology ring'' is constructed to determine the topology of the Balmer spectrum of $K^b({}_{kG}\triv)$. This construction seems to implicitly use the classification of endotrivial complexes for elementary abelian $p$-groups, therefore it is reasonable to expect that a more general twisted cohomology ring for any group, corresponding with the classification of endotrivial complexes for that group, should exist. This also opens the door for gluing techniques with respect to localization, akin to those performed in \cite{BBC09}. However, we leave this exploration for a future article - the scope of this paper does not include tensor-triangulated geometry. 

    \textbf{Notation and conventions:} For the paper, we fix $p$ a prime, $k$ a field of characteristic $p$, and $G$ a finite group. ${}_{kG}\triv$ denotes the category of finitely generated $p$-permutation $kG$-modules, and $K^b({}_{kG}\triv)$ denotes its bounded homotopy category. We denote the Brauer construction (also called the Brauer quotient) at a $p$-subgroup $P$ of $G$ by $-(P): {}_{kG}\triv \to {}_{k[N_G(P)/P]}\triv$. This extends to a functor $-(P): K^b({}_{kG}\triv) \to K^b({}_{k[N_G(P)/P]}\triv)$. We write $s_p(G)$ for the set of $p$-subgroups of $G$, $[s_p(G)]$ for a set of representatives of conjugacy classes of $s_p(G)$, and $\Syl_p(G)$ for the set of Sylow $p$-subgroups of $G$. We let $S \in \Syl_p(G)$ denote an arbitrary Sylow $p$-subgroup of $G$. Given a $kG$-module $M$, we let $M[i]$ denote the chain complex with $M$ in degree $i$ and the zero module in all other degrees. We denote the additive group of all $\Z$-valued superclass functions of $G$, $f: \text{sub}(G) \to \Z$, by $CF(G)$ and the additive group of all $\Z$-valued superclass functions on the set of $p$-subgroups of $G$, $f: s_p(G) \to \Z$, by $CF(G,p)$.

    \section{$V$-endosplit-trivial complexes}

    We refer the reader to \cite{CL11} for review of projectivity relative to modules, absolute $p$-divisibility, and relatively endotrivial modules, to \cite{SKM23} for review of $p$-permutation modules. the Brauer construction, and endotrivial modules. Additionally, we refer the reader to \cite{Bou10} for a complete overview of biset functors, and to \cite[Section 2]{BoYa06} for a brief overview of rational $p$-biset functors.

    Let $V$ be an absolutely $p$-divisible $kG$-module, possibly 0. That is, every indecomposable direct summand of $V$ has $k$-dimension divisible by $p$. We first review some results regarding $V$-endosplit-trivial chain complexes and endotrivial chain complexes.

    \begin{definition}
        Let $C \in K^b({}_{kG}\triv)$. Say $C$ is a \textit{$V$-endosplit-trivial complex} if \[\End_k(C) \cong C^* \otimes_k C \simeq (k \oplus M)[0],\] where $M$ is a $V$-projective $kG$-module.

        If $V = 0$, we say $C$ is an \textit{endotrivial complex} for short. This coincides with the definition of an endotrivial complex given in \cite{SKM23}, that is, we have $C^* \otimes_k C \simeq k[0].$

    \end{definition}

    If $V$ is an absolutely $p$-divisible $kG$-module (possibly $0$), we define $\calX_V \subseteq s_p(G)$ to be the set of $p$-subgroups of $G$ for which $V(P) = 0$. 

    \begin{prop}{(Omnibus properties)}
        Let $C$ be a $V$-endosplit-trivial complex. Then the following hold:
        \begin{enumerate}
            \item $C$ has a unique indecomposable summand $C_0$ which is $V$-endosplit-trivial, and all other direct summands are $V$-projective or contractible. We say $C_0$ is the \textit{cap} of $C$.
            \item There exists a unique $i \in \Z$ for which $H_i(C) \neq 0$.
            \item $C_0$ has vertex set $\Syl_p(G)$.
            \item If $H \leq G$, then $\Res^G_H C$ is a $\Res^G_H V$-endosplit-trivial complex.
            \item If $G$ is a quotient of $\Tilde{G}$, then $\Inf^{\Tilde{G}}_G C$ is a $\Inf^{\Tilde{G}}_G V$-endosplit-trivial complex.
            \item If $P \in s_p(G)$, then the chain complex of $k[N_G(P)/P]$-modules $C(P)$ is a $V(P)$-endosplit-trivial complex. In particular, if $P \in \calX_V$, i.e. $V(P) = 0$, then $C(P)$ is an endotrivial complex.
            \item If $H_i(C) \neq 0$, then $H_i(C)$ is a relatively $V$-endotrivial $kG$-module.
        \end{enumerate}
    \end{prop}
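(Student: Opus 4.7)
The plan is to exploit the defining isomorphism $\End_k(C) = C^* \otimes_k C \simeq (k \oplus M)[0]$ and push it through Künneth, the Brauer construction, and a Krull--Schmidt argument in an appropriate relatively stable category. Throughout, I would work in the bounded homotopy category $K^b({}_{kG}\triv)$, or rather its quotient by $V$-projectives and contractible complexes, because this is the natural setting where the hypothesis says $C^* \otimes C \cong k$. Note that because $k$ is a field, Künneth gives $H_*(C^* \otimes C) = H_*(C^*) \otimes H_*(C) = H_{-*}(C)^* \otimes H_*(C)$, and the hypothesis on $\End_k(C)$ forces this to be concentrated in total degree $0$.

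For part (b), concentration of $H_{-*}(C)^* \otimes H_*(C)$ in total degree $0$ immediately implies $H_*(C)$ is concentrated in a single degree $i$. For part (g), that same Künneth identification gives $\End_k(H_i(C)) = H_i(C)^* \otimes H_i(C) \cong k \oplus M$ as $kG$-modules, which is exactly the definition of $H_i(C)$ being $V$-endotrivial. For part (a), decompose $C = \bigoplus_j C_j$ into indecomposables via Krull--Schmidt in $K^b({}_{kG}\triv)$; then $\End_k(C) = \bigoplus_{j,\ell} C_j^* \otimes C_\ell$, and in the relatively stable category this sum collapses to $k$. Since $k$ is indecomposable, exactly one diagonal term, say $C_0^* \otimes C_0$, contains the $k$-summand (the choice is forced by the nondegenerate pairing of $C_j$ with $C_j^*$), and every other $C_j^* \otimes C_\ell$ must be $V$-projective or contractible. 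A summand-of-a-summand argument then forces each $C_j$ ($j \neq 0$) to be $V$-projective or contractible itself, and $C_0$ to be $V$-endosplit-trivial.

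For (d) and (e), the functors $\Res^G_H$ and $\Inf^{\tilde G}_G$ commute with $\otimes_k$ and with $(-)^*$, and they send $V$-projectives to $\Res V$- (resp.\ $\Inf V$-) projectives, so applying them to $\End_k(C) \simeq (k \oplus M)[0]$ gives the desired conclusions directly. Part (f) rests on the fact that the Brauer construction commutes with $\otimes_k$ on $p$-permutation modules (Brou\'e): $\End_k(C)(P) \cong \End_k(C(P))$ as complexes of $kN_G(P)/P$-modules, and $M(P)$ is $V(P)$-projective when $M$ is $V$-projective. Part (c) then falls out of (f) applied at a Sylow $P = S$: since $V$ is absolutely $p$-divisible, $V(S) = 0$, so $C(S)$ is a (nonzero) endotrivial complex, forcing the cap $C_0$ to have Brauer quotient nonzero at every Sylow, i.e.\ $\Syl_p(G)$ lies in the vertex set, while $C_0$ being a direct summand of a $p$-permutation complex restricts its vertices to $p$-subgroups.

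The main obstacle is the Krull--Schmidt/cap argument in (a): one has to be careful that the ``decomposition'' is taken in the right category (so that idempotents in $\End_{K^b}(C)$ lift to summands), and that the identification of the $k$-summand with a single diagonal tensor product $C_0^* \otimes C_0$ is rigorous. Once this is set up cleanly, the isomorphism $\End_k(C_0) \simeq (k \oplus M')[0]$ for some $V$-projective $M'$ and the $V$-projectivity/contractibility of the other summands follow from matching $k$-dimensions and using that a direct summand of a $V$-projective complex is $V$-projective. Everything else is then either Künneth, functoriality, or an application of the Brauer construction.
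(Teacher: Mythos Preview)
The paper does not prove this proposition at all: its ``proof'' is the single sentence ``Proofs of all these statements can be found in \cite{SKM24a}.'' So there is nothing to compare your argument against within this paper itself.

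That said, your sketch is essentially the standard route and almost certainly tracks the arguments in the cited companion paper: K\"unneth for (b) and (g), monoidality of $\Res$, $\Inf$, and the Brauer construction on $p$-permutation modules for (d)--(f), and a Krull--Schmidt argument in the $V$-relative stable homotopy category for (a) and (c). One small tightening for (a): rather than arguing that ``exactly one diagonal term $C_0^*\otimes C_0$ contains the $k$-summand,'' it is cleaner to observe that in the $V$-relative stable homotopy category $C$ is \emph{invertible} (with inverse $C^*$), and in any Krull--Schmidt tensor category an invertible object is indecomposable; hence exactly one indecomposable summand $C_0$ of $C$ survives in that quotient, and the rest are $V$-projective or contractible. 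Your version is morally the same but the phrasing ``the choice is forced by the nondegenerate pairing'' is doing more work than it appears to. For (c), you should also say explicitly why no other summand contributes to $C(S)$: the contractible summands stay contractible, and the $V$-projective summands vanish under $(-)(S)$ because $V(S)=0$ and the Brauer construction is monoidal, so $C_0(S)$ alone carries the (nonzero) endotrivial complex $C(S)$.
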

    \begin{proof}
        Proofs of all these statements are fairly straightforward and can be found in \cite{SKM24a}.
    \end{proof}

    \begin{theorem}{\cite[Corollary 8.4]{SKM24a}}\label{thm:equivdefendosplittriv}

        Let $C \in Ch^b({}_{kG}\triv)$. The following are equivalent:
        \begin{enumerate}
            \item $C$ is $V$-endosplit-trivial.

            \item For all $P \in s_p(G)$, $C(P)$ has nonzero homology concentrated in exactly one degree, and if $P\in \calX_V$, that homology has $k$-dimension one.

            \item For all $P \in s_p(G)$, $C(P)$ has nonzero homology concentrated in exactly one degree $i$ and $H_i(C)$ is a $V$-endotrivial $kG$-module.

            \item $C$ is isomorphic to a shift of an endosplit $p$-permutation resolution of a $V$-endotrivial $kG$-module.
        \end{enumerate}

        In particular, $C$ is endotrivial if and only if for all $P \in s_p(G)$, $C(P)$ has nonzero homology concentrated in one degree with that homology having $k$-dimension one.

    \end{theorem}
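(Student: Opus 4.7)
My plan is to prove the four-way equivalence via the cycle $(1) \Rightarrow (2) \Rightarrow (3) \Rightarrow (4) \Rightarrow (1)$, relying on the omnibus properties above and two preliminary observations valid because $k$ is a field. First, the K\"unneth formula yields a $kG$-equivariant isomorphism
\[
H_n(\End_k(C)) \;\cong\; \bigoplus_{r \in \Z} H_r(C)^* \otimes_k H_{r+n}(C),
\]
so $\End_k(C)$ has nonzero homology concentrated in degree $0$ if and only if $C$ has nonzero homology concentrated in a single degree $i$, in which case $H_0(\End_k(C)) \cong \End_k(H_i(C))$. Second, in the homology-concentrated case, $C \simeq H_i(C)[i]$ in $D(kG)$, and since $k$-duality and $\otimes_k$ are exact over the field $k$, one obtains a quasi-isomorphism $\End_k(C) \simeq \End_k(H_i(C))[0]$ of complexes of $kG$-modules. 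Both observations apply equally to each Brauer quotient via $\End_k(C)(P) = \End_k(C(P))$.

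For $(1) \Rightarrow (2)$: by omnibus properties (f) and (b), $C(P)$ is $V(P)$-endosplit-trivial with homology in a single degree; for $P \in \calX_V$ we have $V(P) = 0$, hence $\End_k(C(P)) \simeq k[0]$, and the K\"unneth identity forces $\dim_k H(C(P)) = 1$. For $(3) \Rightarrow (4)$: the hypothesis of homology in a single degree $i$ makes $C[-i]$ a $p$-permutation resolution of the (assumed $V$-endotrivial) module $H_i(C)$, and the second preliminary observation supplies $\End_k(C) \simeq \End_k(H_i(C))[0]$, which is precisely the "endosplit" part of being an endosplit $p$-permutation resolution. For $(4) \Rightarrow (1)$: a shift of an endosplit $p$-permutation resolution of a $V$-endotrivial module $N$ automatically satisfies $\End_k(C) \simeq \End_k(N)[0] \simeq (k \oplus W)[0]$ with $W$ being $V$-projective by definition of $V$-endotriviality, so $C$ is $V$-endosplit-trivial.

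The main obstacle is the implication $(2) \Rightarrow (3)$. Setting $P = 1$ in (2) gives homology concentration of $C$ in a single degree $i$, and the K\"unneth identity yields $H_0(\End_k(C)) \cong \End_k(H_i(C))$. What remains is to exhibit a decomposition $\End_k(H_i(C)) \cong k \oplus W$ with $W$ a $V$-projective $kG$-module. I would proceed by applying the K\"unneth identity to each Brauer quotient $C(P)$ with $P \in \calX_V$: the hypothesis $\dim_k H(C(P)) = 1$ forces $\End_k(C)(P) = \End_k(C(P))$ to be quasi-isomorphic to $k[0]$. Combined with the local-global characterization of $V$-projectivity from \cite{CL11} (a module $W$ is $V$-projective exactly when $W(P) = 0$ for every $P \in \calX_V$), this pinpoints the required splitting and establishes $V$-endotriviality of $H_i(C)$. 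The delicate step is the compatibility of Brauer quotient with homology of the $p$-permutation complex $\End_k(C)$, which is enabled here precisely by the homology-concentration hypothesis.

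Finally, the last sentence of the theorem is the case $V = 0$, where $\calX_V = s_p(G)$ and the dimension-$1$ condition is imposed at every $p$-subgroup.
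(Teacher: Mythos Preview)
The paper does not give its own proof of this statement; it is quoted from \cite[Corollary 8.4]{SKM24a} and used as a black box. So the question is whether your argument stands on its own.

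There is a genuine gap, concentrated in $(3)\Rightarrow(4)$ and, by the same mechanism, in your handling of $(2)\Rightarrow(3)$. The ``endosplit'' condition in (4) requires that $\End_k(C)$ be \emph{split} as a complex---homotopy equivalent in $K^b({}_{kG}\catmod)$ to its homology placed in degree $0$---not merely quasi-isomorphic to it. Your second preliminary observation only produces a quasi-isomorphism $\End_k(C)\simeq \End_k(H_i(C))[0]$ in $D(kG)$, obtained from $C\simeq H_i(C)[i]$ in $D(kG)$. That is strictly weaker: a bounded acyclic complex of $p$-permutation modules need not be contractible (e.g.\ for $G=C_2$ the complex $k\xrightarrow{1+g}kG\xrightarrow{1+g}kG\twoheadrightarrow k$ is exact but not split). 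Hence the sentence ``which is precisely the `endosplit' part'' does not follow.

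What is missing is a splitting criterion specific to $p$-permutation complexes: if $D\in Ch^b({}_{kG}\triv)$ and $D(P)$ has homology concentrated in degree $0$ for every $P\in s_p(G)$, then $D\simeq H_0(D)[0]$ in $K^b$, and in particular $H_0(D)$ is again $p$-permutation. One argues by peeling off the extremal degrees, using that a monomorphism (resp.\ epimorphism) between $p$-permutation modules is split exactly when it remains injective (resp.\ surjective) after every Brauer construction. Under hypothesis (2) or (3) this applies to $D=\End_k(C)$, because $\End_k(C)(P)\cong\End_k(C(P))$ has homology in degree $0$ for all $P$ by K\"unneth; this is the step that actually upgrades your quasi-isomorphism to a homotopy equivalence. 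Only after you know $H_0(\End_k(C))=\End_k(H_i(C))$ is $p$-permutation can you legitimately invoke the Brauer-quotient criterion for $V$-projectivity in $(2)\Rightarrow(3)$; that criterion is not available for arbitrary $kG$-modules, so your appeal to it is premature as written. Your ``delicate step'' remark correctly locates the difficulty but does not supply the argument.
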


    \begin{definition}\label{def:grpofetrivs}
        We define the group $\calE^V_k(G)$ of $V$-endosplit-trivial chain complexes as follows. We say two $V$-endosplit-trivial complexes $C_1, C_2$ are equivalent, written $C_1 \sim C_2$, if and only if $C_1$ and $C_2$ have isomorphic caps. Set $\calE_k^V(G)$ to be the set of equivalence classes of $V$-endosplit-trivial complexes, with group addition induced from $\otimes_k$. 
        
        If $C$ is a $V$-endosplit-trivial complex, we write $[C]\in \calE^V_k(G)$ to denote the corresponding class of complexes in the group. We set $\calE_k(G) := \calE^{\{0\}}_k(G)$. This construction agrees with the definition of $\calE_k(G)$ given in \cite{SKM23}, and is the Picard group of $K^b({}_{kG}\triv).$

    \end{definition}

    \begin{definition}
        \begin{enumerate}
            \item \cite[Proposition 3.5.1]{CL11} Let $V$ be an absolutely $p$-divisible $kG$-module. Define an equivalence relation $\sim_V$ on the class of $V$-endotrivial $kG$-modules as follows: given two $V$-endotrivial modules $M,N$, write $M\sim_V N$ if and only if $M$ and $N$ have isomorphic caps, or equivalently, if $M \cong N$ in ${}_{kG}\underline\catmod_V$ (see \cite[Section 4]{SKM24a}). Let $T_V(G)$ denote the resulting set of equivalence classes, then $\otimes_k$ induces an abelian group structure on $T_V(G)$.
            \item Let $S\in \Syl_p(G)$. We define $T_V(G,S) \leq T_V(G)$ as the kernel of $\Res^G_S: T_V(G) \to T_{\Res^G_S V}(S)$. Equivalently, $T_V(G,S)$ is the subgroup of $T_V(G)$ generated by trivial source $V$-endotrivial $kG$-modules; see \cite[Theorem 5.5]{SKM24a} for details. Such modules are also called \textit{Sylow-trivial}. 
        \end{enumerate}
    \end{definition}

    \begin{remark}\label{rem:identification}
        Observe that we have injective group homomorphisms $\calE_k(G) \to \calE_k^V(G)$ given by the obvious inclusion and $T_V(G,S) \to \calE_k^V(G)$ given by $[M] \mapsto [M[0]]$. In this way, we identify both $T_V(G,S)$ and $\calE_k(G)$ as subgroups of $\calE_k^V(G)$. It is straightforward to see the image of the inclusion $\calE_k(G) \hookrightarrow \calE_k^V(G)$ consists of all equivalences classes of $V$-endosplit-trivial complexes $[C]$ for which $C$ has a cap with $k$-dimension one. 

        Recall $CF(G,p)$ denotes the set of $\Z$-valued superclass functions on the set of $p$-subgroups of $G$. When $G$ is a $p$-group, $CF(G) = B(G)^*$, the $\Z$-dual of the Burnside ring $B(G)$, via the identification \[CF(G) \to B(G)^*, f \mapsto \left([G/H] \mapsto f(H)\right).\] In this situation, these groups arise from isomorphic rational $p$-biset functors, which we describe in Section \ref{section:caseofpgroups}.
    \end{remark}

    \begin{definition}
        Let $C$ be a $V$-endosplit-trivial chain complex. The \textit{h-mark of $C$ at $P$}, denoted $h_C(P)$, is the unique integer for which $H_{h_C(P)}(C(P)) \neq 0$. The \textit{homology of $C$ at $P$}, denoted $\calH_C(P)$, is $[H_{h_C(P)}(C(P))] \in T_{V(P)}(N_G(P)/P)$. If $V = 0$, we identify $T_0(G) = X(G)$, the group of isomorphism classes of $k$-dimension one $kG$-modules, with group law induced by $\otimes_k$. $X(G)$ identifies with the group of group homomorphism $\Hom(G,k^\times)$ by sending a group homomorphism $\omega: G\to k^\times$ to the $k$-dimension one $kG$-module $k_\omega$ defined by $g\cdot m := \omega(g)m$ for $m \in k_\omega, g\in G$.

        Observe $h_C$ is a superclass function $h_C: s_p(G) \to \Z$, since for any $g \in G$ and $P \in s_p(G)$ $C({}^gP)\cong ({}^g C)(P),$ see \cite[Proposition 2.7(c)]{SKM23}.
    \end{definition}

    \begin{prop}{\cite[Definition 3.5]{SKM23} and \cite[Theorem 9.7]{SKM24a}}\label{hmarkhoms}
        We have a well-defined, injective group homomorphism:

        \begin{align*}
            \Xi: \calE_k^V(G) &\to \prod_{P \in s_p(G)}\left( \Z \times T_{V(P)}(N_G(P)/P) \right)\\
            [C] &\mapsto (h_C(P), \calH_C(P))_{P \in s_p(G)}.
        \end{align*}

        Regarding $h_C$ as a $\Z$-valued superclass function on $s_p(G)$ gives another group homomorphism, which we call the \textit{h-mark homomorphism},

        \begin{align*}
            h: \calE_k^V(G) &\to CF(G,p)\\
            [C] &\mapsto h_C
        \end{align*}

        We have $\ker(h) \cong T_V(G,S)$, the torsion subgroup of $\calE_k^V(G)$. In particular, $\calE_k^V(G)$ is a finitely generated abelian group.
    \end{prop}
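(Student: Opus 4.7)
My plan is to deduce the proposition from the characterization theorem for $V$-endosplit-trivial complexes cited above, together with basic compatibility of the Brauer construction with direct sum and tensor product.

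For well-definedness of both $\Xi$ and $h$, I would note that if $C_1 \sim C_2$ share an isomorphic cap, they differ only by $V$-projective and contractible summands; the Brauer construction at $P$ sends these to $V(P)$-projectives and contractibles respectively, so the caps of $C_1(P)$ and $C_2(P)$ agree, forcing $h_{C_1}(P) = h_{C_2}(P)$ and $\calH_{C_1}(P) = \calH_{C_2}(P)$ in $T_{V(P)}(N_G(P)/P)$. For the homomorphism property, the natural isomorphism $(C_1 \otimes_k C_2)(P) \cong C_1(P) \otimes_k C_2(P)$ together with the K\"unneth formula over $k$ gives $h_{C_1 \otimes C_2}(P) = h_{C_1}(P) + h_{C_2}(P)$ and $\calH_{C_1 \otimes C_2}(P) = \calH_{C_1}(P) \otimes \calH_{C_2}(P)$, since each $C_i(P)$ has homology concentrated in a single degree.

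For injectivity of $\Xi$, suppose $\Xi([C]) = 0$, so $h_C \equiv 0$ and $\calH_C(P) = [k]$ for all $P$. By the characterization theorem, $C$ is (the cap of) an endosplit $p$-permutation resolution of the $V$-endotrivial module $H_0(C)$; the condition $\calH_C(1) = [k]$ identifies the cap of $H_0(C)$ with $k$, and uniqueness of such resolutions forces the cap of $C$ to be $k[0]$, so $[C] = 0$. For the description $\ker h \cong T_V(G,S)$, the inclusion $[M] \mapsto [M[0]]$ from Proposition 6.4 obviously lands in $\ker h$. Conversely, if $[C] \in \ker h$, the characterization identifies the cap of $C$ with the endosplit $p$-permutation resolution of $H_0(C)$; the hypothesis $h_C(P) = 0$ at every $P$ forces this resolution to be concentrated in degree zero, equivalently the cap of $H_0(C)$ is a trivial source module. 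This furnishes the inverse $[C] \mapsto [\text{cap of }H_0(C)] \in T_V(G,S)$. Finite generation of $\calE_k^V(G)$ then follows, as $\Xi$ embeds it in the finitely generated product $\prod_{P \in [s_p(G)]} (\Z \times T_{V(P)}(N_G(P)/P))$, the second factors being finite by Lassueur's results; this image also certifies that $\ker h$ is the full torsion subgroup, since $C(G,p)$ is torsion-free.

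The main obstacle is the trivial-source conclusion in the description of $\ker h$: one must show that $h_C \equiv 0$ actually forces the cap of $H_0(C)$ to be trivial source rather than merely endo-$p$-permutation with a nontrivial resolution. I would invoke Rickard's uniqueness theorem for endosplit $p$-permutation resolutions: any higher-degree terms in the cap of $C$ would, after applying the Brauer construction at a suitable $p$-subgroup, yield nonzero homology in a nonzero degree, contradicting $h_C \equiv 0$ and forcing the resolution to collapse to a single trivial source module.
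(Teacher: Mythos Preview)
The paper does not supply a proof here; the result is quoted from \cite{SKM24a}. I can therefore only assess your outline on its own terms.

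Your arguments for well-definedness, additivity, and injectivity of $\Xi$ are sound; in fact injectivity already follows from the single component at $P=1$ via Rickard's uniqueness of minimal endosplit $p$-permutation resolutions, so the full product is certainly injective.

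There are two genuine problems. First, the identification $\ker h \cong T_V(G,S)$ is not established. You assert that $h_C \equiv 0$ forces the cap $C_0$ to be concentrated in degree zero, and offer as justification that any positive-degree term would survive the Brauer construction at some $P$ and contribute positive-degree homology there. That inference is unjustified: a nonzero $(C_0)_n(P)$ may perfectly well map injectively into $(C_0)_{n-1}(P)$ for every $P$, so nothing in your sketch rules out a genuinely long indecomposable resolution all of whose local homologies sit in degree $0$. What is actually required is the implication $h_C\equiv 0 \Rightarrow \calH_C(1)\in T_V(G,S)$, i.e.\ that the cap of $H_0(C)$ becomes trivial on restriction to a Sylow subgroup; this needs a separate argument (for instance a reduction to the $p$-group case together with an independent proof that $h$ is injective there), not the collapse heuristic you propose. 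Your argument \emph{does} go through when $V=0$, since then $\calH_C(1)$ is automatically one-dimensional and hence trivial source; the difficulty lies precisely in passing to general $V$.

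Second, the claim that the factors $T_{V(P)}(N_G(P)/P)$ are finite is false. For $G$ a $p$-group, $V = V(\calF_G)$ and $P=1$, this group contains the Dade group $D_k(G)$, which has positive free rank whenever $G$ admits a noncyclic subgroup. If you replace ``finite'' by ``finitely generated'' your embedding argument for finite generation of $\calE_k^V(G)$ can be salvaged; alternatively, once $\ker h \cong T_V(G,S)$ is secured, finite generation follows immediately since $T_V(G,S)$ is finite (there are only finitely many isomorphism classes of trivial source $kG$-modules) and $C(G,p)$ is free of finite rank.
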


    \subsection{The module $V(\calF_G)$}

    We next introduce a specific absolutely $p$-divisible $kG$-module which plays a crucial role in the sequel. This module was used by Lassueur in \cite{CL13} in her construction of the generalized Dade group.

    \begin{definition}
        We define the $kG$-module $V(\calF_G)$ as follows: \[V(\calF_G) = \bigoplus_{Q \in s_p(G)\setminus \Syl_p(G)} k[G/Q].\] It easily follows from definition that a $kG$-module $M$ is $V(\calF_G)$-projective if and only if every indecomposable summand of $M$ has non-Sylow vertices. 
    \end{definition}

    $V(\calF_G)$ is absolutely $p$-divisible, and in a sense is the ``largest'' absolutely $p$-divisible module via the next theorem. 
    \begin{theorem}\label{thm:universalabsolutepdiv}
        Let $V$ be an absolutely $p$-divisible permutation $kG$-module.
        \begin{enumerate}
            \item If $M$ is a $V$-projective $kG$-module, $M$ is $V(\calF_G)$-projective.
            \item If $C$ is a $V$-endosplit-trivial $kG$-complex, $C$ is $V(\calF_G)$-endosplit-trivial.
        \end{enumerate}

    \end{theorem}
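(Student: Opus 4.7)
The plan is to deduce the theorem directly from the Brauer-construction characterization of $V$-endosplit-trivial complexes stated above (\cite[Corollary 8.4]{SKM24a}). The central observation is that, among all absolutely $p$-divisible $kG$-modules, $V(\calF_G)$ produces the smallest possible set $\calX_{V(\calF_G)}$; consequently the pointwise conditions defining $V(\calF_G)$-endosplit-triviality are the weakest, and both statements of the theorem reduce to a containment $\calX_{V(\calF_G)} \subseteq \calX_V$.

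First I would verify that $\calX_{V(\calF_G)} = \Syl_p(G)$. For a subgroup $H \leq G$ and a $p$-subgroup $P$, one has $(k[G/H])(P) \neq 0$ if and only if $P$ is $G$-subconjugate to $H$. Applying this summand-by-summand to
\[V(\calF_G) = \bigoplus_{Q \in [s_p(G)\setminus \Syl_p(G)]} k[G/Q],\]
we see $V(\calF_G)(P) \neq 0$ if and only if $P$ is $G$-subconjugate to some non-Sylow $p$-subgroup $Q$. For non-Sylow $P$ one may take $Q$ in the conjugacy class of $P$; for Sylow $P$, any $Q$ containing a conjugate of $P$ must itself be Sylow, so no non-Sylow summand contributes. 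Hence $\calX_{V(\calF_G)} = \Syl_p(G)$. Since absolute $p$-divisibility of $V$ is by definition the condition $\Syl_p(G) \subseteq \calX_V$, we obtain $\calX_{V(\calF_G)} \subseteq \calX_V$ for every absolutely $p$-divisible $V$.

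For part (b), suppose $C$ is $V$-endosplit-trivial. By the cited characterization, for every $P \in s_p(G)$ the Brauer construction $C(P)$ has nonzero homology concentrated in exactly one degree, and for every $P \in \calX_V$ that homology has $k$-dimension one. The containment $\calX_{V(\calF_G)} \subseteq \calX_V$ immediately yields the corresponding conditions with $\calX_V$ replaced by $\calX_{V(\calF_G)}$, so reapplying the characterization shows $C$ is $V(\calF_G)$-endosplit-trivial. Part (a) is then the special case of stalk complexes: the map $M \mapsto M[0]$ recalled in the preceding Proposition identifies $V$-endotrivial modules with $V$-endosplit-trivial complexes concentrated in a single degree, so (a) follows from (b). The only nontrivial step in this plan is the identification of $\calX_{V(\calF_G)}$, which is a routine Brauer-construction calculation; no serious obstacle arises.
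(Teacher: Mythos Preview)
Your argument for (b) is correct and is in fact a cleaner route than the paper's: you use condition (b) of \cite[Corollary 8.4]{SKM24a} together with the containment $\calX_{V(\calF_G)} = \Syl_p(G) \subseteq \calX_V$, whereas the paper first establishes the inclusion ${}_{kG}Proj(V) \subseteq {}_{kG}Proj(V(\calF_G))$ via vertex theory (each indecomposable summand of $V$ has non-Sylow vertex, and ${}_{kG}Proj(V_i) \subseteq {}_{kG}Proj(Q_i)$ for $Q_i$ a vertex), obtains (a) from that, and then deduces (b) from (a) via condition (c) of the same corollary. Your computation of $\calX_{V(\calF_G)}$ is fine.

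The gap is in your deduction of (a) from (b). The map $M \mapsto M[0]$ does not land in $Ch^b({}_{kG}\triv)$ for an arbitrary $V$-endotrivial module $M$: by definition, $V$-endosplit-trivial complexes are complexes of $p$-permutation modules, and a general $V$-endotrivial module (for instance a nontrivial Heller translate $\Omega^n(k)$ when $V=0$) is not $p$-permutation. The Proposition you invoke only embeds $T_V(G,S)$, the \emph{trivial source} $V$-endotrivial modules, into $\calE_k^V(G)$; it says nothing about arbitrary $V$-endotrivial modules. Consequently the Brauer-construction criterion of \cite[Corollary 8.4]{SKM24a} does not apply to $M[0]$, and (b) does not specialize to (a) in the way you claim.

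To repair this you need an argument that works at the level of modules. The containment $\calX_{V(\calF_G)} \subseteq \calX_V$ alone is too coarse: it does not imply ${}_{kG}Proj(V) \subseteq {}_{kG}Proj(V(\calF_G))$, which is what is actually needed to pass from ``$M^*\otimes_k M \cong k \oplus P$ with $P$ $V$-projective'' to the same statement with $V(\calF_G)$ in place of $V$. The paper's vertex argument supplies exactly this inclusion, and once you have it, (a) is immediate from the definition of relative endotriviality; you could then keep your proof of (b), or derive (b) from (a) as the paper does.
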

    \begin{proof}
        Let $H \leq G$; we denote the full subcategory of $H$-projective $kG$-modules by ${}_{kG}Proj(H)$. Similarly, for a $kG$-module $V$, we denote the full subcategory of $V$-projective $kG$-modules by ${}_{kG}Proj(V)$. Recall that that ${}_{kG}Proj(H) = {}_{kG}Proj(k[G/H])$. For (a), write $V = V_1 \oplus\cdots \oplus V_n$ with each $V_i$ a transitive permutation module. Since $V$ is absolutely $p$-divisible, each $V_i$ has vertex strictly contained in a Sylow $p$-subgroup. Then we have ${}_{kG}Proj(V_i) = {}_{kG}Proj(Q_i)$ with $Q_i$ the stabilizer of $V_i$. We have \[{}_{kG}Proj(V) = \bigoplus_{i=1}^n {}_{kG}Proj(V_i) \subseteq \bigoplus_{i=1}^n {}_{kG}Proj(Q_i) \subseteq {}_{kG}Proj(V(\calF_G)). \] Now (b) follows directly from (a) and Theorem \ref{thm:equivdefendosplittriv}.
    \end{proof}

    \begin{definition}
        Given any $G$-set $X$, we write $\Delta(X)$ for the $kG$-module given by the kernel of the augmentation homomorphism, \[kX \to k,\, x \mapsto 1.\] This is referred to as the \textit{relative syzygy of $X$}.
    \end{definition}

    \begin{theorem}\label{thm:basisofcalEVFG}
        Let $V = V(\calF_G)$. Then $h: \calE_k^V(G) \to CF(G,p)$ is surjective. In particular, if $G$ is a $p$-group, we have an isomorphism $\calE_k^V(G) \cong CF(G) \cong B(G)^*$.
    \end{theorem}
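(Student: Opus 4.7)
The plan is to invoke Proposition \ref{hmarkhoms}, which identifies $\ker h$ with $T_V(G,S)$, and reduce surjectivity to producing enough explicit $V(\calF_G)$-endosplit-trivial complexes to span $C(G,p)$. A convenient $\Z$-basis of $C(G,p)$ is the family $\{\chi_{\leq Q}\}_{Q \in [s_p(G)]}$ of characteristic functions of the sets $\{R \in s_p(G) : R \leq_G Q\}$: if conjugacy classes are ordered by subgroup order, the transition matrix to the standard indicator basis $\{\delta_Q\}$ is unitriangular (two subgroups of the same order are subconjugate only if conjugate). So it suffices to realize each $\chi_{\leq Q}$ as an h-mark function of some $[C] \in \calE_k^V(G)$.

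For each non-Sylow $Q \in [s_p(G)]$, let $C_Q$ be the two-term complex with $k[G/Q]$ in degree $1$ and $k$ in degree $0$, joined by the augmentation $\epsilon\colon gQ \mapsto 1$; its first homology is the relative syzygy $\Delta(G/Q)$. Via the standard identifications $k[G/Q](P) \cong k[(G/Q)^P]$ and $k(P) = k$, the Brauer quotient $C_Q(P)$ becomes $k[(G/Q)^P] \to k$ in degrees $1,0$. If $P \not\leq_G Q$, then $(G/Q)^P = \emptyset$, so $C_Q(P) = (0 \to k)$ has nonzero homology of $k$-dimension one in degree $0$. If $P \leq_G Q$, then
\[
|(G/Q)^P| \;=\; (\text{number of conjugates of }Q\text{ containing }P) \cdot [N_G(Q):Q] \;\geq\; [N_G(Q):Q] \;\geq\; p,
\]
since $Q$ non-Sylow forces $N_S(Q) > Q$ for any Sylow $S \supseteq Q$; hence $\epsilon$ is surjective with nontrivial kernel and $H_\bullet(C_Q(P))$ is concentrated in degree $1$. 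A Sylow $P$ cannot be subconjugate to the non-Sylow $Q$, so it always falls in the first case, and the dimension-one condition at Sylows is satisfied. Thus \cite[Corollary 8.4]{SKM24a} gives $[C_Q] \in \calE_k^V(G)$ with $h_{C_Q} = \chi_{\leq Q}$.

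For a Sylow $S$, the constant function $1 = \chi_{\leq S}$ is realized by $h_{k[1]}$, since $k[1] \in \calE_k(G) \subseteq \calE_k^V(G)$ with Brauer quotient $k$ in degree $1$ at every $P$. The image of $h$ therefore contains the full basis $\{\chi_{\leq Q}\}_{Q \in [s_p(G)]}$, proving surjectivity. When $G$ is a $p$-group, $S = G$ is the only Sylow and $T_V(G,S) = \ker(\Res^G_G : T_V(G) \to T_V(G)) = 0$, so Proposition \ref{hmarkhoms} promotes the surjection to an isomorphism $\calE_k^V(G) \xrightarrow{\sim} C(G,p) \cong B(G)^*$, with the second isomorphism coming from Definition \ref{classfunctionsdef}. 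The one real obstacle is verifying $V(\calF_G)$-endosplit-triviality of each $C_Q$; the Brauer-quotient check above settles this directly, though one could alternatively observe that $C_Q$ is the shifted endosplit $p$-permutation resolution of Lassueur's $V(\calF_G)$-endotrivial module $\Delta(G/Q)$ and invoke the omnibus theorem.
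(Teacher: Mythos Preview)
Your proof is correct and follows essentially the same approach as the paper: you construct the identical complexes $C_Q = (k[G/Q]\to k)$ and $C_S = k[1]$, compute $h_{C_Q} = \chi_{\leq Q}$, and conclude by the unitriangular (equivalently, M\"obius inversion) argument that these span $C(G,p)$. The only cosmetic difference is that you verify $V(\calF_G)$-endosplit-triviality of $C_Q$ via the Brauer-quotient criterion of \cite[Corollary 8.4]{SKM24a}, whereas the paper argues that the augmentation is $Q$-split and hence $\Delta(G/Q)$ is $V(\calF_G)$-endotrivial; you in fact mention this alternative yourself.
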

    \begin{proof}
        We give an explicit set of $V$-endosplit-trivial chain complexes $C_Q$ for every $Q \in [s_p(G)]$ such that the set $\{h_{C_Q}\}$ forms a basis of $CF(G,p)$.

        If $S \in \Syl_p(G)$, we define $C_S = k[1]$. Otherwise, if $Q \not\in\Syl_p(G)$, we define \[C_Q = 0\to k[G/Q] \to k \to 0,\] with $k$ in degree 0, and the differential given by the augmentation map, that is, the homomorphism induced by $gQ \mapsto 1 \in k$. $C_Q$ is a $V$-endosplit-trivial chain complex, as the augmentation map $k[G/Q] \to k$ is $Q$-split, hence $k[G/Q]$-split (see \cite{Al01}). Therefore $H_1(C_Q) \cong \Delta(G/Q)$ is $k[G/Q]$-endotrivial, hence $V(\calF_G)$-endotrivial. It follows that \[h_{C_Q}(P) = \begin{cases}1 & P \leq_G Q\\ 0 & P \not\leq_G Q\end{cases}.\] It follows by a standard M\"obius inversion argument on the poset of $p$-subgroups of $G$ that the set $\{h_{C_Q}\}_{Q \in [s_p(G)]}$ forms a $\Z$-basis of $CF(G,p)$, as desired. The last statement follows immediately, since if $G$ is a $p$-group, $h$ is injective as well, by Proposition \ref{hmarkhoms} and the fact that $T_V(G,S) = T_V(G, G) = \{k[0]\}$.
    \end{proof}

    \begin{remark}
        Explicitly, if $\{e_Q\}_{Q\in [s_p(G)]}$ is the idempotent basis for $CF(G,p)$, i.e. $e_Q(P) = 1$ if $Q =_G P$ and $e_Q(P) = 0$ otherwise, then \[e_Q = \sum_{P \in [s_p(G)]} \mu(P,Q)h_{C_P},\] where $\mu$ denotes the M\"obius function for the poset $[s_p(G)]$.
    \end{remark}

    \begin{definition}\label{basisnotationdefinitions}
        For the rest of this paper, we will always use the notation used in the previous proof for the $V(\calF_G)$-endosplit-trivial complex $C_Q$. We write $\omega_Q$ for $h_{C_Q}$. We will use extensively the facts that $\{C_Q\}_{Q\in [s_p(G)]}$ forms a $\Z$-basis of a free complement of $T_{V(\calF_G)}(G,S) \leq \calE_k^{V(\calF_G)}(G)$ (the torsion subgroup of $ \calE_k^{V(\calF_G)}(G)$), and $\{\omega_Q\}_{Q \in [s_p(G)]}$ forms a $\Z$-basis of $CF(G,p)$.

        Moreover, observe that when $Q\not\in \Syl_p(G)$, $\calH_{C_Q}(1) = [\Delta(G/Q)] \in T_{V(\calF_G)}(G)$, and for $S \in \Syl_p(G)$, $H_{C_S}(1) = [k]$.

        Extending notation, for a $G$-set $X$, define $\omega_X \in CF(G,p)$ as follows: \[\omega_X(P) = \begin{cases} 1 & X^P \neq \emptyset \\ 0 & \text{otherwise} \end{cases}.\] In this case, $\omega_{G/Q} = \omega_Q$.
    \end{definition}

    Using Theorem \ref{thm:basisofcalEVFG}, we obtain a construction for a $V$-endosplit-trivial chain complex $C$ given only its h-marks, up to a twist by a unique trivial source (i.e. indecomposable $p$-permutation) $V$-endotrivial $kG$-module. It suffices to assume $V$ is permutation by \cite[Proposition 3.10]{SKM24a}.

    \begin{theorem}\label{thm:hmarktocomplex}
        Let $V$ be an absolutely $p$-divisible permutation $kG$-module (possibly 0), and $C$ be an indecomposable $V$-endosplit-trivial chain complex of $kG$-modules with h-marks $h_C \in CF(G,p)$. For $P \in [s_p(G)]$, set \[b_P:= \sum_{Q \in [s_p(G)]} h_{C}(Q)\mu(P,Q).\]

        There exists a unique trivial source $V(\calF_G)$-endotrivial $kG$-module $M$ for which the following holds: $C$ is isomorphic to the unique $V$-endosplit-trivial indecomposable direct summand of the $V(\calF_G)$-endosplit-trivial chain complex
        \[M\otimes_k \bigotimes_{Q \in [s_p(G)]} C_Q^{\otimes b_Q},\]where $C_Q$ is defined in the proof of Theorem \ref{thm:basisofcalEVFG}.

    \end{theorem}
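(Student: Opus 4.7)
The plan is to work inside the group $\calE_k^{V(\calF_G)}(G)$: by Theorem \ref{thm:universalabsolutepdiv}, every $V$-endosplit-trivial complex is $V(\calF_G)$-endosplit-trivial, so the class $[C]$ may be viewed as an element of $\calE_k^{V(\calF_G)}(G)$. The strategy is to exploit the explicit $\Z$-basis $\{\omega_Q\}_{Q\in [s_p(G)]}$ of $C(G,p)$ provided by Theorem \ref{thm:basisofcalEVFG} and use Möbius inversion to recover $[C]$ from its h-mark function up to a trivial source correction term. Since $\omega_Q = h_{C_Q}$, the Möbius inversion formula from the remark following Theorem \ref{thm:basisofcalEVFG}, applied to $h_C = \sum_{Q} h_C(Q)\, e_Q$, yields
\[h_C = \sum_{P \in [s_p(G)]} \left(\sum_{Q \in [s_p(G)]} h_C(Q)\mu(P,Q)\right) h_{C_P} = \sum_{P \in [s_p(G)]} b_P\, h_{C_P}.\]

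Next, since the h-mark map $h : \calE_k^{V(\calF_G)}(G) \to C(G,p)$ is a group homomorphism (Proposition \ref{hmarkhoms}), the difference
\[[C] - \sum_{P \in [s_p(G)]} b_P\, [C_P] \in \calE_k^{V(\calF_G)}(G)\]
lies in $\ker h$, which by Proposition \ref{hmarkhoms} equals the torsion subgroup $T_{V(\calF_G)}(G,S)$. This difference therefore equals $[M[0]]$ for a uniquely determined indecomposable trivial source $V(\calF_G)$-endotrivial $kG$-module $M$. Rearranging produces the equality of classes
\[[C] = \left[M \otimes_k \bigotimes_{Q \in [s_p(G)]} C_Q^{\otimes b_Q}\right] \in \calE_k^{V(\calF_G)}(G).\]

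To finish, I appeal to the cap-based description of $\calE_k^{V(\calF_G)}(G)$: the above equality forces $C$ to be isomorphic to the cap of $M \otimes_k \bigotimes_Q C_Q^{\otimes b_Q}$. Since $C$ is by assumption $V$-endosplit-trivial and indecomposable, this cap is $V$-endosplit-trivial, and by the Omnibus property (a) it is the unique such indecomposable direct summand. Uniqueness of $M$ as an indecomposable module follows from uniqueness of the Möbius coefficients $b_Q$ combined with the injectivity of the map $T_{V(\calF_G)}(G,S) \hookrightarrow \calE_k^{V(\calF_G)}(G)$.

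The only genuine subtlety is keeping track of the two equivalence relations on $V(\calF_G)$-endosplit-trivial complexes --- isomorphism versus isomorphism of caps --- and the fact that the constructed tensor product is only a priori $V(\calF_G)$-endosplit-trivial rather than $V$-endosplit-trivial. Both issues are resolved automatically by the class equality above: the $V(\calF_G)$-endosplit-trivial cap of the product is isomorphic to the given $V$-endosplit-trivial complex $C$, so this cap is itself $V$-endosplit-trivial.
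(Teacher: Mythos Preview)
Your proof is correct and follows essentially the same route as the paper: regard $C$ as $V(\calF_G)$-endosplit-trivial, use M\"obius inversion to match h-marks with $\bigotimes_Q C_Q^{\otimes b_Q}$, identify the discrepancy as an element of $\ker h = T_{V(\calF_G)}(G,S)$, and then pass to caps. Your closing paragraph on the $V$ versus $V(\calF_G)$ distinction is slightly more explicit than the paper's, but the argument is the same.
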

    \begin{proof}
        Let $h_C$ denote the corresponding h-marks of $C$. Note $C$ is a $V(\calF_G)$-endosplit-trivial complex as well by Theorem \ref{thm:universalabsolutepdiv}, since $V$ is absolutely $p$-divisible. Then we have
        \begin{align*}
            h_C = \sum_{Q \in [s_p(G)]}a_Q e_Q &= \sum_{Q \in [s_p(G)]} a_Q \left(\sum_{P\in s_p(G)} \mu(P,Q)\omega_P\right) \\
            &= \sum_{P \in [s_p(G)]} \omega_P \left(\sum_{Q \in [s_p(G)]} a_Q \mu(P,Q) \right)= \sum_{P \in [s_p(G)]} b_P\omega_P.
        \end{align*}
        It follows that the $V(\calF_G)$-endosplit-trivial chain complex \[D := \bigotimes_{Q \in [s_p(G)]} C_Q^{\otimes b_Q}\] has the same h-marks as $C$, and thus $D^* \otimes_k C$ has h-marks all zero. By Proposition \ref{hmarkhoms}, \[[M[0]] + [D] = [C] \in \calE_k^{V(\calF_G)}(G),\] for some class of $V(\calF_G)$-endotrivial $p$-permutation modules $[M[0]]$. Since every class of relatively $V(\calF_G)$-endotrivial modules has a unique indecomposable representative, $M$ can be chosen to be indecomposable. Since every class of $V(\calF_G)$-endosplit-trivial complexes has a unique summand with Sylow vertices, and this chain complex is also $V(\calF_G)$-endosplit-trivial, $C$ is the unique indecomposable $V(\calF_G)$-endosplit-trivial summand of $M[0]\otimes_k D$, as desired. 
    \end{proof}

    \begin{remark}
        In particular, if $G$ is a $p$-group, $C$ is isomorphic to the unique indecomposable $V(\calF_G)$-endosplit-trivial direct summand of \[\bigotimes_{Q \in [s_p(G)]} C_Q^{\otimes b_Q},\] since in this case, the only trivial source relatively endotrivial $kG$-module is $k$. One can reconstruct an endotrivial $kG$-complex purely from knowing its h-marks, but explicitly determining such complexes remains difficult. 
    \end{remark}

    \begin{remark}
        We briefly note on endosplit-trivial complexes in the wild. In this paper, we are mostly concerned with either endotrivial complexes (i.e. $\{0\}$-endosplit-trivial complexes) or $V(\calF_G)$-endosplit-trivial complexes. Another choice of module $V$ which produces a class of $V$-endosplit-trivial complexes which is large enough to be interesting but small enough to be understood is $V = k[G/1]$; in this case a $k[G/1]$-endosplit-trivial complex is a bounded chain complex $C$ of $p$-permutation $kG$-modules for which $C^* \otimes_k C \simeq (k \oplus M)[0]$ for a projective $kG$-module $M$. In particular, the homology of $C$ is necessarily an endotrivial module (though not necessarily Sylow-trivial). Certain $k[G/1]$-endosplit-trivial complexes (more accurately, ``twisted'' Steinberg complexes over the orbit category $\calO_p^*(G)$, see \cite[3.2.2]{Gr23}) play a fundamental role in Grodal's main theorem \cite[Theorem A]{Gr23}: there exists an isomorphism of abelian groups $T_k(G,S) \cong H^1(\calO_p^*(G), k^\times)$.
        
        In \cite{SKM24a}, multiple weaker notions of relatively endotrivial complexes are defined: one may alternatively consider bounded chain complexes $C$ of $p$-permutation $kG$-modules for which either $C^* \otimes_k C \simeq k[0] \oplus D$ for $D$ a chain complex of $V$-projective $kG$-modules, or $D$ a $V$-projective chain complex; in these cases, $C$ is respectively a weakly $V$-endotrivial complex or a strongly $V$-endotrivial complex. These versions of relative endotriviality each have corresponding parametrizing subgroups. For $V = k[G/1]$, \cite[Theorems 11.1, 11.4]{SKM24a} assert such groups are isomorphic modulo truncated projective resolutions or injective hulls; for general absolutely $p$-divisible $kG$-modules $V$, it is unknown if an analogue holds.  

    \end{remark}

    \section{Capped endosplit $p$-permutation resolutions and the Dade group of a finite group}

    In this section, we construct a short exact sequence relating $V$-endosplit-trivial complexes, endotrivial complexes, and the generalized Dade group as defined by Lassueur in \cite{CL13}. We also determine up to equivalence which strongly capped endo-$p$-permutation modules, as defined by Lassueur, have endosplit $p$-permutation resolutions.

    \begin{definition}\label{dadegroupgeneralizations}
        \begin{enumerate}
            \item We say a $kG$-module $M$ is \textit{endopermutation} (resp. \textit{endo-$p$-permutation}) if $M^* \otimes_k M$ is a permutation module (resp. $p$-permutation module).
            \item The following definition is due to Dade in \cite{Da78}. Let $P$ be a $p$-group. We say an endopermutation $kP$-module $M$ is \textit{capped} if $M$ has a direct summand with vertex $P$. Equivalently, $M$ is capped if $(M^* \otimes_k M)(P)$ is nonzero, and equivalently, $M^* \otimes_k M$ has $k$ as a direct summand. If $M_0$ is an indecomposable summand of $M$ with vertex $P$, we say $M_0$ is the \textit{cap} of $M$.

            Define an equivalence relation on the set of capped endopermutation $kP$-modules as follows: say $M \sim N$ if and only if $M \oplus N$ is an endopermutation $kP$-module. Equivalently, $M\otimes_k N^*$ is endopermutation. It follows by a lemma of Dade that each equivalence class has a unique indecomposable representative. Therefore, two endopermutation $kP$-module are equivalent if and only if they have isomorphic caps. Write $D_k(P)$ for the set of equivalence classes of capped endopermutation modules. This forms an abelian group with addition induced by $\otimes_k$, and is called \textit{the Dade group}.

            \item A generalization of the Dade group to non-$p$-groups was first constructed by Linckelmann and Mazza in \cite{LiMa09}, and later described using separate techniques by Lassueur in \cite{CL13}. We give the definition in \cite{CL13}. An endo-$p$-permutation $kG$-module $M$ is \textit{strongly capped} if $M$ is $V(\calF_G)$-endotrivial. In this case, $M$ has a unique indecomposable strongly capped direct summand, and all other direct summands are $V(\calF_G)$-projective. As before, say this unique summand is the \textit{cap} of $M$.

            Define an equivalence relation on the set of strongly capped endo-$p$-permutation modules as before: given two endo-$p$-permutation $kG$-modules $M,N$, say $M \sim N$ if and only if $M$ and $N$ have isomorphic caps. It follows that each equivalence class has a unique indecomposable representative. Write $D_k(G)$ for the set of equivalence classes of strongly capped endo-$p$-permutation modules. This forms an abelian group as before with addition induced by $\otimes_k$, and is called \textit{the generalized Dade group}. If $G$ is a $p$-group, we recover the classical Dade group, and for this reason, we refer to $D_k(G)$ as the \textit{Dade group} without further mention.

            Recall that $X(G)$ denotes the set of isomorphism classes of $k$-dimension one $kG$-modules. We have a series of inclusions \[X(G) \hookrightarrow T_{V(\calF_G)}(G,S) \hookrightarrow D_k(G) \hookrightarrow T_{V(\calF_G)}(G).\] In general, these maps will not be surjective.

            \item We extend this notion to $V$-endosplit-trivial $kG$-complexes. Say a chain complex of $kG$-modules $C$ is \textit{capped} if $C$ is a $V(\calF_G)$-endosplit-trivial complex. In this case, $C$ is a shifted endosplit $p$-permutation resolution of a strongly capped endo-$p$-permutation $kG$-module (see \cite[Remark 9.11]{SKM24a}).

            The equivalence relation used for the prior definition of $D_k(G)$ is already encoded in $\calE_k^{V(\calF_G)}(G)$. In this way, $\calE_k^{V(\calF_G)}(G)$ is a chain-complex theoretic ``Dade group.''
        \end{enumerate}

    \end{definition}

    \begin{definition}
        \begin{enumerate}
            \item For a $G$-set $X$, recall $\Delta(X)$ is the kernel of the augmentation homomorphism $kX \twoheadrightarrow k$. Define the element $\Omega_X \in D_k(G)$ as follows:
            \[\Omega_X = \begin{cases}[\Delta(X)] & \text{if } X^S = \emptyset. \\ [k] & \text{if } X^S \neq \emptyset.\end{cases}\] This is well-defined since $\Delta(X)$ is $V(\calF_G)$-endotrivial. Indeed, the chain complex $kX \twoheadrightarrow k$ is a $V(\calF_G)$-endosplit-trivial complex.
            If $X = G/P$ for some $P \in s_p(G)$, we set $\Omega_P := \Omega_{G/P}$.
            \item Let $D^\Omega_k(G) \leq D_k(G)$ be the subgroup generated by elements of the form $\Omega_X$ where $X$ runs over all $G$-sets. When $G$ is a $p$-group, this group is independent of choice of field $k$, so we write $D^\Omega(G)$ (see for instance \cite[Theorem 12.9.10]{Bou10}).
        \end{enumerate}
    \end{definition}

    The following theorem gives a generating set of $D_k^\Omega(G)$.

    \begin{prop}{\cite[Lemma 12.1]{CL13}}\label{gensetofDomega}
        The group $D^\Omega_k(G)$ is generated by the relative syzygies $\Omega_{G/Q}$, where $Q$ runs over the proper subgroups of $S$.
    \end{prop}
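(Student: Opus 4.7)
The plan is to show every generator $\Omega_X$ of $D_k^\Omega(G)$, for $X$ a finite $G$-set, lies in the subgroup generated by $\Omega_{G/Q}$ for $Q$ a proper subgroup of $S$.

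The core of the proof is a reduction for transitive $G$-sets. Consider $X = G/H$ for some $H \leq G$. If $H$ contains a Sylow $p$-subgroup of $G$, then $(G/H)^S \neq \emptyset$ and $\Omega_{G/H} = 0$ by convention. Otherwise, let $Q$ be a Sylow $p$-subgroup of $H$; since $[H:Q]$ is coprime to $p$, the idempotent $\frac{1}{[H:Q]}\sum_{h \in H/Q}hQ$ produces a $kH$-module splitting $k[H/Q] \cong k \oplus \Delta(H/Q)$. Inducing to $G$ yields
\[ k[G/Q] \cong k[G/H] \oplus \Ind_H^G\Delta(H/Q), \]
and by naturality of the augmentation this decomposition restricts on kernels to
\[ \Delta(G/Q) \cong \Delta(G/H) \oplus \Ind_H^G\Delta(H/Q). \]
The indecomposable summands of $\Ind_H^G\Delta(H/Q)$ have vertices $G$-conjugate to subgroups of $Q$, which is proper in $S$ under our hypothesis; hence this summand is $V(\calF_G)$-projective. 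Since the class in $D_k(G)$ of a strongly capped endo-$p$-permutation module depends only on its unique Sylow-vertex cap, we conclude $\Omega_{G/H} = \Omega_{G/Q}$. After $G$-conjugating $Q$ into $S$, it lies among the prescribed generators.

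For a general $G$-set $X = \bigsqcup_{i=1}^n G/H_i$, I would induct on $n$. Setting $X' = \bigsqcup_{i>1}G/H_i$, the short exact sequence
\[ 0 \to \Delta(G/H_1) \oplus \Delta(X') \to \Delta(X) \to k \to 0 \]
obtained by comparing joint and separate augmentations provides the link. Taking Brauer constructions at each $p$-subgroup $P$ produces analogous sequences that split whenever both components carry $P$-fixed points. Using that a class in $D_k(G)$ is detected by its system of Brauer constructions (together with the characterization of strongly capped endo-$p$-permutation modules), one extracts an identity locating $\Omega_X$ inside the subgroup generated by $\Omega_{G/H_1}$ and $\Omega_{X'}$; the inductive hypothesis, together with the transitive reduction above, then completes the argument.

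The principal obstacle is the disjoint-union step. The short exact sequence above need not split as $kG$-modules, and because the group operation in $D_k(G)$ is tensor product rather than direct sum, no naive additivity $\Omega_{Y \sqcup Z} = \Omega_Y + \Omega_Z$ holds; simple examples with $G = C_p$ already show such a formula fails (indeed, $\Omega_{C_p \sqcup C_p} = \Omega_{C_p}$ while $2\Omega_{C_p} = 0$ for $p$ odd). The resolution must instead exploit the cap-level identities extracted from the Brauer-construction analysis, which is the technical substance of the argument in \cite[Lemma 12.1]{CL13}.
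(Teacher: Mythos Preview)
The paper does not supply its own proof of this proposition; it is quoted from \cite[Lemma 12.1]{CL13}, with \cite[Proposition 5.14]{GeYa21} noted as an independent proof. There is therefore no in-paper argument to compare against.

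Your transitive reduction is correct and cleanly executed. Since $[H:Q]$ is prime to $p$, the averaging idempotent splits $k[H/Q]\cong k\oplus\Delta(H/Q)$ over $kH$, and inducing gives $\Delta(G/Q)\cong\Delta(G/H)\oplus\Ind_H^G\Delta(H/Q)$; the second summand is $V(\calF_G)$-projective because every indecomposable constituent has vertex contained in a $G$-conjugate of $Q\lneq_G S$, so indeed $\Omega_{G/H}=\Omega_{G/Q}$ in $D_k(G)$.

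On the non-transitive step you are candid that the argument is incomplete, and that assessment is accurate: this is a genuine gap, and it is exactly where the content of the cited lemma lives. The short exact sequence $0\to\Delta(G/H_1)\oplus\Delta(X')\to\Delta(X)\to k\to 0$ you write down is valid, but as you say it does not translate into a Dade-group identity because the group law is $\otimes_k$ rather than $\oplus$, and your $C_p$ example shows naive additivity fails. The mechanism actually used in the references is not a Brauer-construction analysis of that sequence but rather explicit relations among relative syzygies (of the type $\Omega_X+\Omega_Y=\Omega_{X\sqcup Y}+\Omega_{X\times Y}$ under suitable hypotheses, going back to Bouc for $p$-groups), which amount to showing that $\Omega_X$ depends only on the class function $\omega_X$; this is precisely \cite[Proposition 6.9]{GeYa21} as recorded later in the paper. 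Once one knows $\Psi(\omega_X)=\Omega_X$ for every $G$-set $X$, writing $\omega_X$ in the basis $\{\omega_{G/Q}\}_{Q\in[s_p(G)]}$ and applying the homomorphism $\Psi$ finishes immediately. What you have, then, is a correct outline with the crux correctly located and honestly left open.
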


    This was proven using separate techniques in \cite[Proposition 5.14]{GeYa21} as well.

    \begin{remark}\label{relsyzygies}
        Let $S \in \Syl_p(G)$. For any $G$-set $X$ for which $X^S = 0$, $\Delta(X)$ is endo-$p$-permutation and has an endosplit $p$-permutation resolution (if $X^S \neq 0$, $\Delta(X)$ is permutation). Indeed, the chain complex $kX \twoheadrightarrow k$ suffices.

        It follows that every element of $D^\Omega_k(G)$ has a representative which has an endosplit $p$-permutation resolution. Indeed, if \[[M] = \sum_{i=1}^j \Omega_{Q_i}^{\epsilon_i} \in D_k^\Omega(G),\] with $\epsilon_i \in\{\pm 1\}$ and $Q_i$ a $p$-subgroup of $G$ which is non-Sylow, then a corresponding endosplit $p$-permutation resolution of $[M]$ (up to some representative) is \[\bigotimes_{i=1}^j [C_Q]^{\epsilon_i} \in \calE_k^{V\calF(G)}(G).\] In particular, given any $[M] \in D^\Omega_k(G)$, there exists a $V(\calF_G)$-endosplit-trivial complex $C$ with $\calH_C(1) = [M] \in D_k(G)$.

        However, every element which has an endosplit $p$-permutation resolution does not necessarily belong to an equivalence class in $D^\Omega_k(G)$. For instance, if $G$ is not a periodic group, any class in $D_k(G)$ with a representative given by a nontrivial $k$-dimension one representation has an endosplit $p$-permutation resolution but is not an element of $D^\Omega_k(G)$, see \cite[Remark 12.2]{CL13}. On the other hand, it is shown in \cite[Section 12]{CL13} that in many cases, $D^\Omega_k(G) + \Gamma(X(N_G(S))) = D_k(G)$, where $\Gamma(X(N_G(S)))$ is the subgroup of $D_k(G)$ generated by Green correspondents of 1-dimensional $k[N_G(S)]$-modules. By \cite[Proposition 4.1]{CL13}, $\Gamma(X(N_G(S)))$ is a well-defined subgroup of $D_k(G)$, that is, each representative of each equivalence class in $\Gamma(X(N_G(S)))$ is $V(\calF_G)$-endotrivial.

        This observation is an analogue of the characterization for $p$-groups that $D_k(G) = D^\Omega(G)$ for any $p$-group which does not contain a generalized quaternion group in its genetic bases (see \cite[Theorem 12.10.4]{Bou10}). It is unknown when this equality holds in general for non-$p$-groups. Note that the elements of $\Gamma(X(N_G(S)))$ have trivial source representatives, so every element of the subgroup $D^\Omega_k(G) + \Gamma(X(N_G(S)))$ has an endosplit $p$-permutation resolution.

        Finally, the sum $D^\Omega_k(G) + \Gamma(X(N_G(S)))$ may not be direct. See the comment after \cite[Theorem 12.6]{CL13}.
    \end{remark}

    In fact, $\Gamma(X(N_G(S))) = T_{V(\calF_G)}(G,S)$, by \cite[Proposition 4.1(d)]{CL13}. From this point on, we replace $\Gamma(X(N_G(S)))$ with $T_{V(\calF_G)}(G,S)$.

    \begin{definition}
        Denote by $\calT\calE_k(G)$ the subgroup of $\calE_k(G)$ consisting of endotrivial complexes $C$ for which $\calH_C(1) = [k]$. We call this subgroup \textit{the homology-normalized subgroup of $\calE_k(G)$}. $\calT\calE_k(G)$ identifies with the subgroup of $\calE_k^{V(\calF_G)}(G)$ consisting of equivalence classes of complexes $C$ for which $\calH_C(1) = [k]$ as well.
    \end{definition}

    Since $V(\calF_G)$-endosplit-trivial complexes are equivalently shifted endosplit $p$-permutation resolutions of $V(\calF_G)$-endotrivial $kG$-modules, the image of $\calH$ consists of all equivalence classes of strongly capped endo-$p$-permutation modules which have endosplit $p$-permutation resolutions. The next theorem characterizes precisely which strongly capped endo-$p$-permutation modules, up to equivalence in the Dade group, have an endosplit $p$-permutation resolution.

    \begin{theorem}\label{thm:whichmoduleshaveepprs}
        Let $M$ be a strongly capped endo-$p$-permutation $kG$-module. $[M] \in D_k(G)$ contains a representative which has an endosplit $p$-permutation resolution if and only if $[M] \in D_k^\Omega(G) + T_{V(\calF_G)}(G,S)$.

    \end{theorem}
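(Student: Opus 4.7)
The reverse direction is essentially already encoded in Remark \ref{relsyzygies}. My plan would be as follows: if $[M] \in D_k^\Omega(G)$, a tensor product of the explicit basis complexes $C_Q$ from Theorem \ref{thm:basisofcalEVFG} gives an endosplit $p$-permutation resolution of a representative (this is exactly the content of the remark); if $[N] \in T_{V(\calF_G)}(G,S)$, then $N$ itself is trivial source and the complex $N[0]$ is trivially such a resolution. Tensoring these two produces a $V(\calF_G)$-endosplit-trivial complex whose degree-$h$ homology realizes the sum $[M] + [N]$ in $D_k(G)$, finishing this direction.

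For the forward implication, I would take $C$ to be an endosplit $p$-permutation resolution of a representative of $[M]$, so that $C$ is $V(\calF_G)$-endosplit-trivial and defines a class $[C] \in \calE_k^{V(\calF_G)}(G)$. The key structural input is Definition \ref{basisnotationdefinitions}: the complexes $\{C_Q\}_{Q \in [s_p(G)]}$ form a $\Z$-basis of a free complement to the torsion subgroup $T_{V(\calF_G)}(G,S)$ of $\calE_k^{V(\calF_G)}(G)$. Consequently, I may write
\[ [C] = \sum_{Q \in [s_p(G)]} b_Q [C_Q] + [N[0]] \]
for integers $b_Q$ and some trivial source $V(\calF_G)$-endotrivial module $N$. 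I would then transport this decomposition into $D_k(G)$ via the map $[D] \mapsto \calH_D(1)$.

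The technical heart of the argument is verifying that $[D] \mapsto \calH_D(1)$ is a group homomorphism $\calE_k^{V(\calF_G)}(G) \to D_k(G)$, which follows from the Künneth formula: a tensor product of complexes each with homology concentrated in a single degree has homology concentrated in the summed degree, equal to the tensor product of the two homologies, and on strongly capped endo-$p$-permutation modules this reproduces the group law of $D_k(G)$. Under this homomorphism, $[C] \mapsto [M]$, $[N[0]] \mapsto [N]$, and for each $Q$, Definition \ref{basisnotationdefinitions} gives $\calH_{C_Q}(1) = [\Delta(G/Q)] = \Omega_{G/Q} = \Omega_Q$ when $Q \not\in \Syl_p(G)$ (since $|Q| < |S|$ forces $(G/Q)^S = \emptyset$), while $\calH_{C_S}(1) = [k] = \Omega_S$ for $S \in \Syl_p(G)$. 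Hence
\[ [M] = \sum_{Q \in [s_p(G)]} b_Q \Omega_Q + [N] \in D_k^\Omega(G) + T_{V(\calF_G)}(G,S), \]
as required. The only real obstacle I anticipate is writing the Künneth-based homomorphy check crisply; the rest is a direct application of the basis decomposition established in Theorem \ref{thm:basisofcalEVFG} and the identification of the torsion subgroup from Proposition \ref{hmarkhoms}.
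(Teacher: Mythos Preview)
Your proposal is correct and follows essentially the same route as the paper: both directions use the basis $\{C_Q\}$ together with the torsion part $T_{V(\calF_G)}(G,S)$ of $\calE_k^{V(\calF_G)}(G)$, and then read off the image under $\calH_{-}(1)$. Your only unnecessary worry is the K\"unneth check for the homomorphism property of $[D]\mapsto \calH_D(1)$; this is already contained in Proposition~\ref{hmarkhoms} (the map $\Xi$ is a group homomorphism, and projecting to the $P=1$ component lands in $T_{V(\calF_G)}(G)\supseteq D_k(G)$), so you can simply cite it rather than reprove it.
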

    \begin{proof}
        The reverse direction follows since any $\Omega_X \in D_k^\Omega(G)$ has the corresponding endosplit $p$-permutation resolution $kX \twoheadrightarrow k$ and any $[M] \in T_{V(\calF_G)}(G,S)$ has $M[0]$ as an endosplit $p$-permutation resolution.

        For the forward direction, suppose $M$ has an endo-$p$-permutation resolution. Since $M$ is $V(\calF_G)$-endotrivial, there exists an $V(\calF_G)$-endosplit-trivial complex $C$ with $\calH(C) = [M] \in D_k(G)$.

        Theorem \ref{thm:basisofcalEVFG} implies that $C$ shares a cap with a tensor product of the chain complexes $C_{Q}$, their duals, and elements of $T_{V(\calF_G)}(G,S)$ considered as chain complexes in degree 0. It follows that \[[M] = \calH(C) = [M_0] +\sum_{i=1}^j \Omega_{Q_i}^{\pm 1} \in D_k(G),\] where each $Q_i$ is some $p$-subgroup which is non-Sylow, and $M_0$ is a trivial source $V(\calF_G)$-endotrivial module. Thus $[M] \in D_k^\Omega(G) + T_{V(\calF_G)}(G,S)$, as desired.
    \end{proof}

    \begin{theorem}\label{shortexactsequence}\label{kernelofhomology}
        We have a well-defined group homomorphism $\calH: \calE_k^{V(\calF_G)}(G) \to D_k(G)$ induced by $[C] \mapsto \calH_C(1)$. $\ker(\calH) = \calT\calE_k(G)$ and $\im (\calH) = D^\Omega_k(G) + T_{V(\calF_G)}(G,S)$, so we have a short exact sequence of abelian groups \[0\to \calT\calE_k(G) \to \calE_k^{V(\calF_G)}(G) \xrightarrow[]{\calH} D_k^\Omega(G) + T_{V(\calF_G)}(G,S) \to 0.\]

        In particular, if $G$ satisfies $D^\Omega_k(G) + T_{V(\calF_G)}(G,S) = D_k(G)$, then $\calH$ is surjective.
    \end{theorem}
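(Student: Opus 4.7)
The plan is to verify the three components of the statement in sequence: well-definedness of $\calH$ as a group homomorphism, identification of its kernel, and identification of its image (from which the exactness and the ``in particular'' clause follow immediately).

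First I would check that $\calH: [C] \mapsto \calH_C(1)$ is well-defined. By the omnibus proposition, a $V(\calF_G)$-endosplit-trivial complex $C$ has nonzero homology in a single degree $h_C(1)$, and that homology $H_{h_C(1)}(C)$ is relatively $V(\calF_G)$-endotrivial; since $C$ is itself an endosplit $p$-permutation resolution of this module (up to shift), $H_{h_C(1)}(C)$ is a strongly capped endo-$p$-permutation $kG$-module and thus determines a class in $D_k(G)$. Two $V(\calF_G)$-endosplit-trivial complexes with isomorphic caps produce isomorphic homology modules (since the cap detects the nonvanishing homology), so the map descends to equivalence classes. For the homomorphism property, I would apply the K\"unneth formula: since we are working over a field, $H_\ast(C\otimes_k D)\cong H_\ast(C)\otimes_k H_\ast(D)$, which in our setting collapses to $H_{h_C(1)+h_D(1)}(C\otimes_k D)\cong H_{h_C(1)}(C)\otimes_k H_{h_D(1)}(D)$. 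This gives $\calH([C]+[D])=\calH([C])+\calH([D])$ in $D_k(G)$.

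Next I would compute $\ker \calH$. If $[C]\in \calT\calE_k(G)$, then $C$ is an endotrivial complex with $\calH_C(1)=[k]$, which is the identity of $D_k(G)$, so $\calT\calE_k(G)\subseteq \ker\calH$. Conversely, suppose $\calH([C])=0$ in $D_k(G)$. Then $H_{h_C(1)}(C)$ is equivalent to $k$ in $D_k(G)$, meaning its cap has $k$-dimension one. By the proposition characterizing $\im(\iota: \calE_k(G)\hookrightarrow \calE_k^{V(\calF_G)}(G))$ as exactly those classes whose homology cap is one-dimensional, we conclude $[C]\in \calE_k(G)$, and moreover $\calH_C(1)=[k]$, so $[C]\in\calT\calE_k(G)$.

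Finally, the image computation is essentially a restatement of Theorem~\ref{thm:whichmoduleshaveepprs}. The image of $\calH$ is precisely the set of classes in $D_k(G)$ which admit a representative possessing an endosplit $p$-permutation resolution, and the theorem identifies this as $D_k^\Omega(G)+T_{V(\calF_G)}(G,S)$. The ``in particular'' statement is then immediate: if $D_k^\Omega(G)+T_{V(\calF_G)}(G,S)=D_k(G)$, then $\calH$ surjects onto $D_k(G)$.

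I expect no serious obstacle; the bulk of the work has already been carried out in Theorem~\ref{thm:whichmoduleshaveepprs} and in the omnibus/characterization results for $V$-endosplit-trivial complexes. The most delicate point is the kernel identification, where one must be careful to distinguish between ``cap is $k$-dimension one'' (putting the class in $\calE_k(G)$) and ``cap equals $k$'' (putting it in the smaller subgroup $\calT\calE_k(G)$); this distinction is resolved precisely because $\calH_C(1)=0$ in $D_k(G)$ forces the cap to be $k$ itself, not just one-dimensional.
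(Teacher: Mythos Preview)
Your proposal is correct and follows essentially the same three-step structure as the paper's proof (well-definedness, kernel, image via Theorem~\ref{thm:whichmoduleshaveepprs}). Two small points of comparison: for well-definedness the paper invokes the already-established homomorphism $\Xi$ from Proposition~\ref{hmarkhoms} (so that the homomorphism property is free) and then checks on the explicit generating set $\{C_Q\}\cup T_{V(\calF_G)}(G,S)$ from Theorem~\ref{thm:basisofcalEVFG} that the image lands in $D_k(G)\subseteq T_{V(\calF_G)}(G)$, rather than arguing via K\"unneth and Definition~3.1(d) as you do. For the kernel, the paper's argument is more direct than yours: since $D_k(G)\hookrightarrow T_{V(\calF_G)}(G)$, one has $\calH_C(1)=[k]$ in $D_k(G)$ if and only if $\calH_C(1)=[k]$ in $T_{V(\calF_G)}(G)$, and the latter is precisely the defining condition for $[C]\in\calT\calE_k(G)$ as a subgroup of $\calE_k^{V(\calF_G)}(G)$; your detour through Proposition~2.7 to first land in $\calE_k(G)$ is unnecessary, and the distinction you flag between ``cap has $k$-dimension one'' and ``cap equals $k$'' never actually arises.
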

    \begin{proof}
        $\calH$ is a well-defined group homomorphism from Theorem \ref{hmarkhoms} after projection, and Theorem \ref{thm:basisofcalEVFG}, which demonstrates that the image of $\calH$ is generated by endo-$p$-permutation $kG$-modules. In particular, $\calE_k^{V(\calF_G)}$ is spanned by $\{C_Q\}_{Q \in [s_p(G)]}$ and $T_V(G,S)$, all of which have endo-$p$-permutation homology.

        Given any $V(\calF_G)$-endosplit-trivial complex $C$, $\calH(C) = [k] \in D_k(G)$ if and only if $\calH(C) = [k] \in T_{V(\calF_G)}(G)$. From this, it follows by definition that $\ker(\calH) = \calT\calE_k(G)$.

        Now, $\im (\calH) = D^\Omega_k(G) + T_{V(\calF_G)}(G,S)$ follows directly from Theorem \ref{thm:whichmoduleshaveepprs}, since $V(\calF_G)$-endosplit-trivial complexes are equivalently shifted endosplit $p$-permutation resolutions. The existence of the short exact sequence follows immediately.
    \end{proof}

    \begin{remark}
        In particular, $\calH: \calE_k^{V(\calF_G)}(G) \to D_k(G)$ is surjective in the following cases:
        \begin{itemize}
            \item $G$ is a $p$-group which is not generalized quaternion.
            \item $G$ has a cyclic Sylow $p$-subgroup.
            \item $p$ is odd and $G$ has a normal Sylow $p$-subgroup.
            \item $N_G(S)$ controls $p$-fusion of $G$.
            \item $G = GL_3(\mathbb{F}_p)$ for odd $p$.
        \end{itemize}
        Surjectivity when $G$ is a $p$-group which is not generalized quaternion follows by the characterization of the Dade group (see \cite[Chapter 12]{Bou10} or \cite{CaTh00}), as $D_k(G)$ is generated by relative syzygies. The other cases are known examples of when $D_k^\Omega(G) + T_{V(\calF_G)}(G,S) = D_k(G)$, see \cite[Remark 12.2]{CL13}.
    \end{remark}

    \begin{corollary}
        Let $M$ be an indecomposable $V(\calF_G)$-endotrivial $kG$-module, that is, a cap of a strongly capped endo-$p$-permutation module. $M$ has an endosplit $p$-permutation resolution if and only if $[M] \in D^\Omega_k(G) + T_{V(\calF_G)}(G,S)$.
    \end{corollary}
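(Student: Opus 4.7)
The plan is to deduce this corollary from Theorem \ref{thm:whichmoduleshaveepprs} and Theorem \ref{shortexactsequence} by upgrading ``the class $[M]$ has some representative with an endosplit $p$-permutation resolution'' to ``the indecomposable cap $M$ itself has such a resolution,'' via passage to the cap on both the complex side and the module side.

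The forward direction is immediate from Theorem \ref{thm:whichmoduleshaveepprs}: if $M$ itself admits an endosplit $p$-permutation resolution, then the class $[M] \in D_k(G)$ trivially contains a representative (namely $M$) having such a resolution, so $[M] \in D^\Omega_k(G) + T_{V(\calF_G)}(G,S)$.

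For the reverse direction, suppose $[M] \in D^\Omega_k(G) + T_{V(\calF_G)}(G,S)$. By Theorem \ref{shortexactsequence} this equals $\im \calH$, so there exists a $V(\calF_G)$-endosplit-trivial complex $C$ with $\calH([C]) = [M]$. The next step is to pass to the unique indecomposable $V(\calF_G)$-endosplit-trivial cap $C_0$ of $C$ via the Omnibus properties (part (a)); since $[C_0] = [C]$ in $\calE_k^{V(\calF_G)}(G)$, we still have $\calH([C_0]) = [M]$. By \cite[Corollary 8.4]{SKM24a}, $C_0$ is, up to shift, an endosplit $p$-permutation resolution of its own homology $N := H_i(C_0)$, and $N$ is $V(\calF_G)$-endotrivial. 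Since $\calH([C_0]) = [M]$ and $M$ is indecomposable, the cap of $N$ is $M$, so we may write $N \cong M \oplus P'$ with $P'$ some $V(\calF_G)$-projective complement.

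The main obstacle is to verify $P' = 0$. The plan is to exploit the indecomposability of $C_0$: by the standard uniqueness of endosplit $p$-permutation resolutions up to contractible summands (from Rickard \cite{R96}), and since a $V(\calF_G)$-projective (in particular trivial source) module $P'$ is its own endosplit $p$-permutation resolution via $P'[i]$, the resolution $C_0$ of $M \oplus P'$ would have to decompose as the direct sum of an endosplit $p$-permutation resolution of $M$ and the complex $P'[i]$. If $P' \neq 0$, this contradicts the indecomposability of $C_0$. Hence $P' = 0$, $N \cong M$ exactly, and $C_0$ itself is (up to shift) an endosplit $p$-permutation resolution of $M$, as desired.
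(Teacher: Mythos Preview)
Your argument is correct, but you work considerably harder than the paper does. The forward direction is identical. For the reverse direction, the paper's proof is a single observation: by Theorem~\ref{thm:whichmoduleshaveepprs}, some representative of $[M]$ has an endosplit $p$-permutation resolution; every representative has the form $M \oplus P$ with $P$ a $V(\calF_G)$-projective module; and it is standard (from Rickard \cite{R96}) that a direct summand of a module admitting an endosplit $p$-permutation resolution again admits one. That finishes the proof immediately.

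You reach exactly the same configuration --- an endosplit $p$-permutation resolution $C_0$ of $N \cong M \oplus P'$ with $P'$ relatively projective --- but then pose as your ``main obstacle'' the claim $P' = 0$. This is unnecessary for the corollary: the moment $M$ is a summand of something with such a resolution, you are done. Your extra step does yield the pleasant bonus that the cap $C_0$ itself resolves $M$, but that is not part of the statement. There is also a small imprecision in how you justify the decomposition $C_0 \simeq C_M \oplus P'[i]$: Rickard's \emph{uniqueness} only compares two given resolutions of the same module; to split $C_0$ along the decomposition $M \oplus P'$ you need the idempotent-lifting/summand-closure property (the idempotent on $N$ projecting onto $P'$ lifts to $C_0$ because $C_0$ is endosplit, and idempotents split in $K^b$). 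Once you invoke that, you already have a resolution $C_M$ of $M$ and the corollary follows --- before any appeal to indecomposability of $C_0$.
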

    \begin{proof}
        The forward direction follows immediately from the previous theorem. For the reverse direction, the previous theorem implies that $[M]$ has a representative which has an endosplit $p$-permutation resolution. However, any representative of $[M]$ can be written as $M \oplus P$, where $P$ is $V(\calF_G)$-projective. If $M\oplus P$ has an endosplit $p$-permutation resolution, $M$ does as well, and the result follows.
    \end{proof}

    \begin{corollary}
        Let $G$ be a $p$-group. $\rk_\Z \calE_k(G) = c(G)$, where $c(G)$ is the number of conjugacy classes of cyclic subgroups of $G$.
    \end{corollary}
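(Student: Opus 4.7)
The plan is to reduce this to a rank computation for Borel--Smith functions via Theorem \ref{bigtheorem}. That theorem identifies the inclusion $\calE_k(G)\hookrightarrow \calE_k^{V(\calF_G)}(G)$ with the inclusion $C_b(G,p)\hookrightarrow C(G,p)$ on the bottom row; in particular, $\calE_k(G) \cong C_b(G,p)$ as abelian groups. Since $\Z$-rank is an isomorphism invariant, the corollary reduces to showing that $\rk_\Z C_b(G,p) = c(G)$.

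To compute this rank, I would exploit the bottom short exact sequence of Theorem \ref{bigtheorem},
$$0\to C_b(G,p) \to C(G,p) \xrightarrow{\Psi} D^\Omega(G) \to 0,$$
together with additivity of rank. By Definition \ref{classfunctionsdef}, for a $p$-group $G$ we have $C(G,p) \cong B(G)^*$, which is free abelian of rank equal to the number of conjugacy classes of subgroups of $G$. Bouc's structure theorem for the torsion-free part of $D^\Omega(G)$ for a $p$-group (see \cite[Theorem 12.9.10]{Bou10}) yields that $\rk_\Z D^\Omega(G)$ equals the number of conjugacy classes of non-cyclic subgroups of $G$. Subtracting, one obtains $\rk_\Z C_b(G,p) = c(G)$, as required.

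The only real point of friction is locating the precise rank of $D^\Omega(G)$ for a $p$-group in the literature; everything else is routine bookkeeping with short exact sequences. If one prefers to bypass the Dade group entirely, an alternative route is to directly invoke Bouc--Yal\c{c}in's description of $\Q \otimes C_b(G,p)$: Artin induction shows that this $\Q$-vector space is spanned by the class functions $\omega_C$ with $C$ cyclic, and so has $\Q$-dimension, and therefore $\Z$-rank, exactly $c(G)$.
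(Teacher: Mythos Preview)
Your argument is correct and essentially the same as the paper's: both compute the rank via additivity in a short exact sequence with middle term of rank $|[s_p(G)]|$ and quotient $D^\Omega(G)$ of rank equal to the number of conjugacy classes of non-cyclic subgroups. The only difference is that the paper works directly with the top sequence $0\to \calE_k(G)\to \calE_k^{V(\calF_G)}(G)\to D^\Omega(G)\to 0$ from Theorem~\ref{shortexactsequence} (using that $\calT\calE_k(G)=\calE_k(G)$ for $p$-groups) and reads off the middle rank from Theorem~\ref{thm:basisofcalEVFG}, whereas you first pass through Theorem~\ref{bigtheorem} to the isomorphic bottom sequence; since the corollary is stated before Theorem~\ref{bigtheorem}, the paper's route avoids a forward reference, but mathematically nothing changes.
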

    \begin{proof}
        This follows from the short exact sequence in Theorem \ref{shortexactsequence}. It is demonstrated in Theorem \ref{thm:basisofcalEVFG} that $\rk_\Z \calE_k^{V(\calF_G)}(G)$ is equal to the number of conjugacy classes of subgroups of $G$, and the classification of $D_k(G)$ for $p$-groups states that $\rk_\Z D_k(G)$ is equal to the number of conjugacy classes of non-cyclic subgroups of $G$, see \cite[Corollary 12.9.11]{Bou10}.
    \end{proof}

    We have another homomorphism, the Bouc homomorphism, which has image in $D^\Omega_k(G)$ and which will be critical in the sequel. This homomorphism was defined for non-$p$-groups by Gelvin and Yal\c{c}in in \cite[Theorem 1.4]{GeYa21}

    \begin{definition}\label{def:bouchom}
        Let $G$ be an finite group. $\Psi: CF(G,p) \to D^\Omega(G)$ is defined as follows. For the basis of $ CF(G,p)$ given by $\{\omega_Q\}_{Q \in [s_p(G)]}$, we $\Z$-linearize the assignment
        \[\omega_Q \mapsto \begin{cases}\Omega_Q & Q \not\in \Syl_p(G) \\  [k] & Q \in \Syl_p(G) \end{cases}\] This homomorphism is called the \textit{Bouc homomorphism}.
    \end{definition}

    Although the Bouc homomorphism is defined by $\Z$-linearization for transitive $G$-sets, the definitions of $\omega_X$ and $\Omega_X$ ensure that the Bouc homomorphism behaves as expected for all $G$-sets.

    \begin{prop}{\cite[Proposition 6.9]{GeYa21}}
        For all $G$-sets $X$, $\Psi(\omega_X) = \Omega_X$.
    \end{prop}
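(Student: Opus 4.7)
The plan is to reduce the general identity $\Psi(\omega_X) = \Omega_X$ to the definitional case $X = G/Q$ with $Q \in s_p(G)$, by two successive reductions. First, for an arbitrary transitive $X = G/H$, I let $Q$ be a Sylow $p$-subgroup of $H$. Sylow's theorem applied inside $H$ forces $\omega_{G/H} = \omega_{G/Q}$ in $C(G,p)$, since for any $p$-subgroup $P$, $P \leq_G H$ iff $P \leq_G Q$. Meanwhile, the natural $kG$-projection $\pi\colon k[G/Q] \twoheadrightarrow k[G/H]$ is $kG$-split by the averaging map $gH \mapsto [H{:}Q]^{-1}\sum_{h \in H/Q} ghQ$ (using that $[H{:}Q]$ is coprime to $p$, hence invertible in $k$), yielding a $kG$-module decomposition $\Delta(G/Q) \cong \Delta(G/H) \oplus \ker\pi$. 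Since $\ker\pi$ has vertex contained in $Q$, it is $V(\calF_G)$-projective whenever $Q \notin \Syl_p(G)$, so $\Omega_{G/H} = \Omega_{G/Q}$ in $D_k(G)$; the case $Q \in \Syl_p(G)$ is trivial as both sides equal $[k]$. Combined with $\omega_{G/Q} = \omega_Q$ and the observation that $(G/Q)^S \neq \emptyset$ iff $S \leq_G Q$ iff $Q \in \Syl_p(G)$ (using $|S| \geq |Q|$), both sides match the defining value of $\Psi(\omega_Q)$.

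For general $X$, I use the parallel identities
\[
\omega_{X \sqcup Y} + \omega_{X \times Y} = \omega_X + \omega_Y \text{ in } C(G,p), \qquad \Omega_{X \sqcup Y} + \Omega_{X \times Y} = \Omega_X + \Omega_Y \text{ in } D_k(G).
\]
The first follows pointwise from $(X \sqcup Y)^P = X^P \sqcup Y^P$, $(X \times Y)^P = X^P \times Y^P$, and the Boolean identity $(a \vee b) + (a \wedge b) = a + b$. The second is a classical identity for relative syzygies, obtained by tensoring $0 \to \Delta(X) \to kX \to k \to 0$ with $kY$, combining with the analogous sequence for $\Delta(Y)$ and the short exact sequence $0 \to \Delta(X) \oplus \Delta(Y) \to \Delta(X \sqcup Y) \to k \to 0$, then simplifying modulo $V(\calF_G)$-projective summands in the Dade group. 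Subtracting, the defect $\epsilon(X) := \Psi(\omega_X) - \Omega_X \in D_k(G)$ satisfies $\epsilon(X \sqcup Y) + \epsilon(X \times Y) = \epsilon(X) + \epsilon(Y)$.

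The previous step gave $\epsilon(G/H) = 0$ for every subgroup $H$. To close the induction for arbitrary $X$, I first observe that both $\omega_X$ and $\Omega_X$ are insensitive to orbit multiplicities (via the $kG$-decomposition $\Delta(X \sqcup X) \cong \Delta(X) \oplus kX$ together with the $V(\calF_G)$-projectivity of $kX$ when $X^S = \emptyset$), so I may assume all orbit stabilizers of $X$ are pairwise non-conjugate. Then I induct on the lexicographic measure (size of the largest orbit stabilizer, then the number of orbits realizing it): writing $X = X_1 \sqcup X_2$ with $X_1$ a transitive orbit of maximal stabilizer, the identity reduces $\epsilon(X)$ to $-\epsilon(X_1 \times X_2)$, and Mackey's formula decomposes $X_1 \times X_2$ into transitive orbits with stabilizers $Q_1 \cap {}^g Q_i \leq Q_1$, strictly smaller than the maximum of $X$ under the WLOG assumption. \emph{The main obstacle} is establishing the Dade group identity $\Omega_{X \sqcup Y} + \Omega_{X \times Y} = \Omega_X + \Omega_Y$ cleanly; this requires carefully tracking $V(\calF_G)$-projective cancellations across several short exact sequences of endo-$p$-permutation modules. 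Once that identity is in hand, the remaining combinatorial induction goes through routinely.
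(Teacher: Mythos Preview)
The paper does not prove this statement; it is quoted verbatim from Gelvin and Yal\c{c}in \cite[Proposition~6.9]{GeYa21}, so there is no in-paper argument to compare against. What can be said is that your strategy is exactly the classical one and is almost certainly the proof given in \cite{GeYa21}: first reduce an arbitrary transitive $G$-set $G/H$ to $G/Q$ with $Q\in\Syl_p(H)$ (your averaging section and the equality $\omega_{G/H}=\omega_{G/Q}$ are correct), and then handle non-transitive $X$ via the Bouc relation
\[
\Omega_{X\sqcup Y}+\Omega_{X\times Y}=\Omega_X+\Omega_Y
\]
together with its obvious analogue for $\omega$. Your lexicographic induction on orbit-stabiliser sizes, after stripping multiplicities, is a clean way to close the argument.

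Two comments on the one step you flag as the obstacle. First, your sketch of the Dade-group identity (``tensor the augmentation sequence for $X$ with $kY$, combine, and simplify'') hides real work: when $X^S\neq\emptyset$ the module $\Delta(X)$ is \emph{not} in general $V(\calF_G)$-endotrivial, so one cannot simply read off $[\Delta(X)]$ as an element of $D_k(G)$ and cancel. A complete proof has to split into cases according to which of $X^S$, $Y^S$ are empty and, in the mixed case, show directly that $[\Delta(X\times Y)]=[\Delta(Y)]$ in $D_k(G)$; this uses that the short exact sequence $0\to kX\otimes\Delta(Y)\to\Delta(X\times Y)\to\Delta(X)\to 0$ is $V(\calF_G)$-split and that $kX\otimes\Delta(Y)\cong\Delta(Y)\oplus(\text{$V(\calF_G)$-projective})$ because $X^S\neq\emptyset$ forces $kX$ to contain $k$ as a summand after restriction to $S$ (hence $kX\otimes\Delta(Y)$ and $\Delta(Y)$ have the same cap). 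For $p$-groups this is due to Bouc; for arbitrary $G$ it is established in \cite{GeYa21}. Second, in your multiplicity step the splitting $\Delta(X\sqcup X)\cong\Delta(X)\oplus kX$ and the $V(\calF_G)$-projectivity of $kX$ when $X^S=\emptyset$ are both fine, but you should note that when $X^S\neq\emptyset$ both $\Omega_{X\sqcup X}$ and $\Omega_X$ equal $[k]$ by definition, so no module argument is needed there. With those case distinctions made explicit, your proof goes through.
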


    \section{The case of $p$-groups} \label{section:caseofpgroups}

    We are ready to begin the classification of endotrivial complexes, focusing first on the case of $p$-groups. For this section, unless we specify otherwise, we assume $G$ is a $p$-group. In this case, the short exact sequence in Theorem \ref{shortexactsequence} can be simplified as follows:

    \[0 \to \calE_k(G) \to \calE_k^{V(\calF_G)}(G) \xrightarrow{\calH} D^\Omega(G) \to 0.\]

    In addition, we have an isomorphism $h: \calE_k^V(G) \cong CF(G)$ induced by the assignment $C_Q \mapsto \omega_Q$. The picture is as follows:

    \begin{figure}[H]
        \centering
        \label{shortexactseqforendotrivs}
        \begin{tikzcd}
            0 \ar[r] & \calE_k(G) \ar[r] & \calE_k^{V(\calF_G)}(G) \ar[r, "\calH"] \ar[d, "h"'] & D^\Omega(G) \ar[r] & 0 \\
            &  & CF(G) \ar[ru, "\Psi"]
        \end{tikzcd}

    \end{figure}

    The Bouc homomorphism is the unique homomorphism which makes the above diagram commute. The next proposition verifies this.

    \begin{prop}\label{prop:bouchomcommutesforpgroups}
        If $G$ is a $p$-group, $\calH = \Psi\circ h$.
    \end{prop}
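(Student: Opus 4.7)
The plan is to verify the identity on the $\Z$-basis $\{C_Q\}_{Q \in [s_p(G)]}$ of $\calE_k^{V(\calF_G)}(G)$ supplied by Theorem \ref{thm:basisofcalEVFG}. For this to suffice as a generating set, I first need the torsion part $T_{V(\calF_G)}(G,S)$ to vanish when $G$ is a $p$-group. Since $S = G$ in this case, the restriction $\Res^G_S$ is the identity, so its kernel $T_{V(\calF_G)}(G,S)$ is trivial. Thus $\calE_k^{V(\calF_G)}(G)$ is $\Z$-free on $\{C_Q\}_{Q \in [s_p(G)]}$, and it is enough to check $\calH(C_Q) = \Psi(h(C_Q))$ for each conjugacy class representative $Q$.

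Next I evaluate both sides on each basis element. For $Q \in [s_p(G)]$ with $Q \neq G$, recall from the proof of Theorem \ref{thm:basisofcalEVFG} that $C_Q = (0 \to k[G/Q] \to k \to 0)$ with $k$ in degree zero and differential the augmentation, so its unique nonzero homology $H_1(C_Q) = \Delta(G/Q)$ sits in degree one. Because $(G/Q)^G = \emptyset$, the definition of $\Omega_{G/Q}$ yields $\calH(C_Q) = [\Delta(G/Q)] = \Omega_Q$. On the other hand, $h(C_Q) = \omega_Q$ by Definition \ref{basisnotationdefinitions}, and $\Psi(\omega_Q) = \Omega_Q$ by Definition \ref{def:bouchom}. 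When $Q = G$ (the Sylow case), $C_G = k[1]$ gives $\calH(C_G) = [k]$, while $h(C_G) = \omega_G$ and $\Psi(\omega_G) = [k]$. So the two homomorphisms agree on each basis vector, and hence on all of $\calE_k^{V(\calF_G)}(G)$.

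There is no serious obstacle here: both $\calH$ and $\Psi \circ h$ are group homomorphisms by construction (Theorem \ref{shortexactsequence} for $\calH$, $\Z$-linearity of $\Psi$ and $h$ for the composite), so the only content of the proposition is the agreement on generators, which follows directly from unfolding the definitions of $C_Q$, $\omega_Q$, $\Omega_Q$, and the Bouc homomorphism. The only subtlety worth flagging is the vanishing of $T_{V(\calF_G)}(G,S)$ used to reduce to the basis $\{C_Q\}$ rather than to a basis together with a torsion summand.
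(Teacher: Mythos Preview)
Your proof is correct and follows essentially the same approach as the paper: both verify the identity on the $\Z$-basis $\{C_Q\}_{Q\in [s_p(G)]}$ of $\calE_k^{V(\calF_G)}(G)$ by direct computation of $\calH(C_Q)$ and $\Psi(h(C_Q))$. You add the helpful extra justification that $T_{V(\calF_G)}(G,S)$ is trivial when $G$ is a $p$-group, which the paper simply asserts via freeness.
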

    \begin{proof}
        Note in this case $\calE_k^{V(\calF_G)}(G)$ is a free abelian group. It suffices to show commutativity for a $\Z$-basis of $\calE_k^{V(\calF_G)}(G)$. Recall a basis is given by the set \[\{C_Q = k[G/Q] \to k\}_{Q \in [s_p(G)] \setminus G} \cup \{C_G = k[1]\}.\] In Definition \ref{basisnotationdefinitions}, we observed $\calH(C_Q) = \Omega_{G/Q}$ when $Q \neq G$ and $\calH(C_G) = [k]$. For the other side, if $Q \neq G$, $(\Psi \circ h)(C_Q) = \Psi(\omega_Q) = \Omega_Q$, and $(\Psi\circ h)(C_G) = \Psi(\omega_G) = [k]$. Thus the diagram commutes.
    \end{proof}

    \begin{remark}
        In general, this equality will not hold if $G$ is not a $p$-group (and in fact, one must enlarge the codomain to $D^\Omega_k(G) + T_{V(\calF_G)}(G,S))$. For instance, for any $M \in T_{V(\calF_G)}(G,S)$, we have \[\calH(M[0]) = [M] \in D^\Omega(G) + T_{V(\calF_G)}(G,S),\] but \[\Psi\circ h(M[0]) = [k] \in D^\Omega(G) + T_{V(\calF_G)}(G,S).\] There is a subgroup of $\calE_k^{V(\calF_G)}(G)$ for which equality holds however, which we discuss in the sequel.
    \end{remark}

    In \cite{BoYa06}, Bouc and Yal\c{c}in proved that the Bouc homomorphism can be regarded as a surjective natural transformation of biset functors, and computed its kernel, which we now describe.

    \begin{definition}
        \begin{enumerate}
            \item The assignment $G \mapsto D^\Omega(G)$ for all $p$-groups $G$ defines a rational $p$-biset functor $D^\Omega$. We refer the reader to \cite[Section 12.6]{Bou10} for details on this construction.
            \item The assignment $G\mapsto CF(G)$ (resp. $G \mapsto B(G)^*$) for all $p$-groups $G$ defines isomorphic rational $p$-biset functors $C\cong B^*$. Given a $(H,G)$-biset $U$, $CF(U)$ is defined to be the morphism \[T_U: f \mapsto \left(K \mapsto \sum_{u \in K \backslash U / G} f(K^u)\right),\] where $K \leq H$, $[K \backslash U / G]$ is a set of representatives of $(K, G)$-orbits on $U$, and $K^u$ is the subgroup of $G$ defined by \[K^u = \{g\in G \mid \exists k \in K, ku = ug\}.\]
        \end{enumerate}

    \end{definition}

    \begin{definition}{\cite[Page 210]{tD87}}\label{def:bsf}
        Let $G$ be an finite group. Say $(T,S)$ is a \textit{subquotient} of $G$ if $S \trianglelefteq T \leq G$. A superclass function on $G$ valued on $p$-subgroups is a \textit{Borel-Smith function} if it satisfies the following conditions, called the \textit{Borel-Smith conditions}.
        \begin{itemize}
            \item If $(T,S)$ is a subquotient of $p$-subgroups of $G$ such that $T/S \cong (\Z/p\Z)^2$, then \[h(S) - \sum_{S < Y < T}h(Y) +ph(T) = 0.\]
            \item If $(T,S)$ is a subquotient of $p$-subgroups of $G$ such that $T/S$ is cyclic of order $p$, for an odd prime $p$, or cyclic of order 4, then $h(S) \equiv h(\hat{S}) \mod 2$, where $\hat{S}/S$ is the unique subgroup of prime order of $T/S$.
            \item If $(T,S)$ is a subquotient of $p$-subgroups of $G$ such that $T/S$ is a quaternion group of order $8$, then $h(S) \equiv h(\hat{S}) \mod 4$, where $\hat{S}/S$ is the unique subgroup of order 2 of $T/S$.
        \end{itemize}
        The Borel-Smith functions of $G$ form an additive subgroup of $CF(G,p)$, denoted by $CF_b(G,p)$. Moreover, for $p$-groups, the assignment $G \mapsto CF_b(G)$ forms a rational $p$-biset biset subfunctor of $CF$. See \cite[Proposition 3.7]{BoYa06} for details.
    \end{definition}

    Borel-Smith functions are also defined for arbitrary superclass functions by requiring that the Borel-Smith conditions are satisfied over all sections of the above form, rather than only sections of $p$-subgroups. However for the context of this paper, we are only concerned with superclass functions valued on $p$-subgroups. For the following theorem, in the original paper, the biset functor $CF$ is replaced with $B^*$, however they are canonically identified (see Remark \ref{rem:identification}).

    \begin{theorem}{\cite[Theorem 1.2]{BoYa06}}
        The kernel of $\Psi: CF \to D^\Omega$ is the biset functor $CF_b$ of Borel-Smith functions. Therefore, there is an exact sequence of $p$-biset functors of the form \[0 \to CF_b \to CF \to D^\Omega \to 0.\]
    \end{theorem}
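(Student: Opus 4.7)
The plan is to view $C$, $C_b$, and $D^\Omega$ as rational $p$-biset functors and show the natural transformation $\Psi: C \to D^\Omega$ has kernel exactly $C_b$. Four ingredients are required: (i) naturality of $\Psi$ with respect to the five biset operations (restriction, induction, inflation, deflation, isogation); (ii) surjectivity of $\Psi$; (iii) the inclusion $C_b \subseteq \ker\Psi$; and (iv) the reverse inclusion $\ker\Psi \subseteq C_b$.

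Surjectivity (ii) is immediate from Proposition \ref{gensetofDomega}: $D^\Omega(G)$ is generated by the relative syzygies $\Omega_{G/Q}$, and these are hit by $\omega_Q$. Naturality (i) amounts to matching the standard Mackey, inflation, and deflation formulas for $\omega_X$ (equivalently, for the Burnside element $[G/Q]$ under the identification $C \cong B^*$) with the corresponding formulas for $\Omega_X$. The latter reduce to the elementary identities $\Delta(X \sqcup Y) \simeq \Delta(X) \oplus \Delta(Y)$ up to a trivial summand and $\Ind_H^G \Delta(X) = \Delta(\Ind_H^G X)$, together with the explicit behavior of $\Delta$ under inflation and deflation along normal subgroups. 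For (iii), each of the three Borel-Smith conditions corresponds to a known relation among relative syzygies in $D^\Omega(G)$: the rank-two condition \[h(S) - \sum_{S<Y<T}h(Y) + p\,h(T) = 0\] mirrors the tensor relation $\Omega_{G/S} - \sum_{S<Y<T}\Omega_{G/Y} + p\,\Omega_{G/T} = 0$ for sections with $T/S \cong (\Z/p)^2$, while the mod-$2$ and mod-$4$ clauses follow from the classical computations of $D^\Omega$ for cyclic $p$-groups, $C_4$, and $Q_8$ respectively.

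Step (iv) is the main obstacle and is where rational $p$-biset functor theory is essential. Both $D^\Omega$ and $C/C_b$ are rational $p$-biset functors (for $D^\Omega$ this is \cite[Section 12.6]{Bou10}; for $C/C_b$ it follows from $C_b$ being a subfunctor of the rational $p$-biset functor $C$). By Bouc's structure theorem for rational $p$-biset functors, a natural transformation between them is an isomorphism if and only if it is an isomorphism on every elementary abelian $p$-group $E = (\Z/p)^n$. Thus it suffices to check that the induced surjection \[\overline{\Psi}_E: C(E,p)/C_b(E,p) \twoheadrightarrow D^\Omega(E)\] is an isomorphism, which reduces to a rank comparison since both sides are free abelian. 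On the right, $D^\Omega(E) = D(E)$ has rank equal to the number of non-cyclic subgroups of $E$ (since $E$ is not generalized quaternion). On the left, one computes the rank of $C_b(E,p)$ by observing that on an elementary abelian $p$-group the only active Borel-Smith relations are the rank-two ones, one per non-cyclic subgroup of $E$, so the corank of $C_b(E,p)$ in $C(E,p)$ matches. A linear independence verification among the rank-two relations, using that distinct non-cyclic subgroups give independent conditions, finishes the isomorphism on elementary abelian groups and hence on all $p$-groups.
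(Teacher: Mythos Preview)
This theorem is quoted from \cite{BoYa06} and the present paper does not supply its own proof, so there is no in-paper argument to compare against. I will therefore assess your sketch on its own merits.

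Your outline for steps (i)--(iii) is sound and is essentially how the original argument proceeds. The problem is step (iv). Two issues arise, one minor and one fatal.

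The minor issue is the assertion that a morphism of rational $p$-biset functors is an isomorphism once it is so on elementary abelian $p$-groups. This is true, but it is not a triviality: it uses that rational $p$-biset functors form a Serre subcategory (closed under subobjects and quotients) together with the defining property that $F(P)$ embeds into the product of $F(E)$ over elementary abelian sections $E$ of $P$. You should state and justify this explicitly.

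The fatal issue is the rank comparison. A surjection of finitely generated abelian groups of equal free rank need not be an isomorphism when torsion is present, and torsion is unavoidable here. Already for $E = \Z/p$ with $p$ odd one has $D^\Omega(E) \cong \Z/2$, generated by $\Omega_1$; more generally $D^\Omega(E)$ for elementary abelian $E$ has nontrivial $2$-torsion coming from the second Borel--Smith clause. Correspondingly, $C(E,p)/C_b(E,p)$ has $2$-torsion because $C_b$ is cut out not only by the integer-valued rank-two relations but also by the congruences $h(S)\equiv h(T)\pmod 2$ for every index-$p$ inclusion $S<T$ (when $p$ is odd). Your argument ignores these congruences entirely and therefore cannot distinguish $C/C_b$ from any quotient with the same free rank. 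Relatedly, your claim that the rank-two relations are ``one per non-cyclic subgroup of $E$'' is not justified: those relations are indexed by \emph{sections} $(T,S)$ with $T/S\cong(\Z/p)^2$, and showing that the number of independent ones equals the number of non-cyclic subgroups is itself a computation you have not done.

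The actual proof in \cite{BoYa06} avoids this by a different route: it identifies $C_b$ with the image of the dimension map from the functor of real representations and then matches $D^\Omega$ with the corresponding cokernel via an explicit comparison, handling the torsion directly rather than by a rank count. If you want to salvage your approach, you must verify at each elementary abelian $E$ that $\overline{\Psi}_E$ is injective on torsion as well, which amounts to checking the cases $E=\Z/p$ and $E=(\Z/p)^2$ by hand and then inducting.
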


    Recall that since $G$ is a $p$-group, $\calE_k^{V(\calF_G)}(G)$ has a canonical basis $\{C_Q\}_{Q\in [s_p(G)]}$ as described in Definition \ref{basisnotationdefinitions}.

    \begin{theorem}\label{bigtheorem}
        Let $G$ be a $p$-group. We have an isomorphism of short exact sequences:

        \begin{figure}[H]
            \centering
            \begin{tikzcd}
                0 \ar[r] & \calE_k(G) \ar[r] \ar[d, "h"] & \calE_k^{V(\calF_G)}(G) \ar[r, "\calH"] \ar[d, "h"] & D^\Omega(G) \ar[r] \ar[d, "="] & 0\\
                0 \ar[r] & CF_b(G) \ar[r]  & CF(G) \ar[r, "\Psi"] & D^\Omega(G) \ar[r] & 0
            \end{tikzcd}
            \label{fig:bigshortexactsequence}
        \end{figure}

        In particular, $[C] \in \calE_k^{V(\calF_G)}(G)$ has an endotrivial cap if and only if $h_C$ is a Borel-Smith function, and we have an isomorphism $h: \calE_k(G) \to CF_b(G)$.
    \end{theorem}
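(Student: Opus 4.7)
The plan is to realize this theorem as a formal diagram comparison, with essentially all the substantive content already supplied by earlier results. First I would establish the top row directly by specializing Theorem \ref{shortexactsequence} to $p$-groups. Two collapses occur in this specialization: (i) $T_{V(\calF_G)}(G,S) = 0$ because $S = G$ makes $\Res^G_S$ the identity on $T_{V(\calF_G)}(G)$, so its kernel is trivial, and (ii) $\calT\calE_k(G) = \calE_k(G)$ because for any endotrivial complex $C$ over a $p$-group, $\calH_C(1)$ lies in $X(G) = \Hom(G,k^\times)$, and this group is trivial since $k$ has characteristic $p$ and $G$ is a $p$-group (any $p$-power-order element $\alpha$ of $k^\times$ satisfies $(\alpha-1)^{p^n} = 0$, hence $\alpha = 1$). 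The bottom row is then Bouc and Yal\c{c}in's exact sequence cited just above.

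For the verticals, the middle map $h$ is an isomorphism by Theorem \ref{thm:basisofcalEVFG} (it sends the basis $\{[C_Q]\}$ to the basis $\{\omega_Q\}$), the right vertical is the identity, and commutativity of the right square is exactly Proposition \ref{prop:bouchomcommutesforpgroups}. Functoriality of kernels then produces the left vertical automatically: since $h$ is an isomorphism intertwining $\calH$ with $\Psi$, its restriction is an isomorphism
\[ \ker \calH \xrightarrow{\;\sim\;} \ker \Psi, \]
and by exactness of the two rows these kernels are $\calE_k(G)$ and $C_b(G,p)$ respectively. Commutativity of the left square is then automatic because both horizontals are the corresponding inclusions and the left vertical is the restriction of $h$.

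The ``in particular'' assertion falls out by a one-line chase of the right square: $[C]$ has an endotrivial cap iff $[C] \in \ker\calH$ iff $\Psi(h_C) = \calH([C]) = 0$ iff $h_C \in C_b(G,p)$. There is no genuine obstacle; the substance lies entirely in the preceding machinery, and the only step requiring real attention is the bookkeeping verification that both $T_{V(\calF_G)}(G,S)$ and $X(G)$ vanish for $p$-groups, so that the top row of Theorem \ref{shortexactsequence} truly collapses to the simpler form displayed in the statement.
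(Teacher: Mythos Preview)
Your proposal is correct and follows essentially the same approach as the paper: invoke Proposition \ref{prop:bouchomcommutesforpgroups} for commutativity of the right square, Theorem \ref{thm:basisofcalEVFG} for the isomorphism $h$, and then identify the induced isomorphism of kernels with $\calE_k(G) \xrightarrow{\sim} C_b(G,p)$. Your additional justification for why the top row of Theorem \ref{shortexactsequence} collapses when $G$ is a $p$-group is exactly the reasoning the paper gives at the start of Section \ref{section:caseofpgroups} before stating the theorem.
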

    \begin{proof}
        The commutativity of the right-hand square is precisely the statement of Proposition \ref{prop:bouchomcommutesforpgroups}, and $h$ is an isomorphism by Theorem \ref{thm:basisofcalEVFG}. Therefore, we have a short exact sequence isomorphism
        \begin{figure}[H]
            \centering
            \begin{tikzcd}
                0 \ar[r] & \ker (\calH) \ar[r] \ar[d, "h"] & \calE_k^{V(\calF_G)}(G) \ar[r, "\calH"] \ar[d, "h"] & D^\Omega(G) \ar[r] \ar[d, "="] & 0\\
                0 \ar[r] & \ker (\Psi) \ar[r]  & CF(G) \ar[r, "\Psi"] & D^\Omega(G) \ar[r] & 0
            \end{tikzcd}
        \end{figure}
        Since $\ker(\calH) = \calE_k(G)$ and $\ker(\Psi) = CF_b(G)$, the result follows.
    \end{proof}

    \begin{remark}
        Using the construction presented in Theorem \ref{thm:hmarktocomplex}, given any Borel-Smith function $f \in CF_b(G)$ for $G$ a $p$-group,
        we can construct a corresponding $V(\calF_G)$-endosplit-trivial complex which contains
        as a direct summand the unique indecomposable endotrivial complex with h-mark function $f$. However, there does not appear an easy way to construct the unique indecomposable endotrivial complex with h-mark function $f$ without taking direct summands. Determining an explicit construction for these complexes is of great interest. 
    \end{remark}

    \begin{remark}
        The isomorphism constructed in Theorem \ref{bigtheorem} factors through an injective morphism of short exact sequences, where the intermediate short exact sequence was constructed by Yal\c{c}in's study of $G$-Moore complexes, see \cite{Ya17} for definitions and details. Indeed, there is a unique injective group homomorphism $\mathscr{Y}$ which sends for any $Q \in s_p(G)\setminus \{G\}$ the chain complex $C_Q$ to the equivalence class in $\calM_G$ of the $G$-Moore space $G/Q$, regarded as a discrete $G$-CW-complex, and sends $C_G$ to the discrete $G$-Moore space with two points and trivial $G$-action. This leads to the following morphisms of short exact sequences:

        \begin{figure}[H]
            \centering
            \begin{tikzcd}
                0 \ar[r] & \calE_k(G) \ar[r] \ar[d, "\mathscr{Y}"] & \calE_k^{V(\calF_G)}(G) \ar[r, "\calH"] \ar[d, "\mathscr{Y}"] & D^\Omega(G) \ar[r] \ar[d, "="] & 0\\
                0 \ar[r] & \calM_0(G) \ar[r] \ar[d, "\Dim"]  & \calM(G) \ar[r, "\mathfrak{hom}"] \ar[d, "\Dim"] & D^\Omega(G) \ar[r] \ar[d, "="] & 0\\
                0 \ar[r] & CF_b(G) \ar[r]  & CF(G) \ar[r, "\Psi"] & D^\Omega(G) \ar[r] & 0
            \end{tikzcd}

        \end{figure}

        It follows from definitions that $h = \Dim \circ \mathscr{Y}$ and that the diagram commutes. The author thanks Erg\"un Yalcin for bringing to his attention these $G$-Moore complexes.

    \end{remark}

    In fact, if $G$ is a $p$-group, the group $CF_b(G)$ of Borel-Smith functions, and hence the group $\calE_k(G)$ of endotrivial complexes, has a canonical basis with a particularly nice property. This was known before (see the references provided in \cite{tD79}), but we spell it out in detail. 

    \begin{definition}
        We say that a superclass function $f$ is increasing (resp. decreasing) if the following holds: if $K \leq H$ are subgroups of $G$, then $f(K) \leq f(H)$ (resp. $f(K) \geq f(H)$). If either holds, we say that the function is \textit{monotone}. Similarly, we say that an endotrivial complex $C$ is increasing (resp. decreasing) if its corresponding h-mark function $h_C$ is increasing (resp. decreasing). 
    \end{definition}
    
    Given an $kG$-module $V$, the \textit{dimension function} associated to $V$ is the superclass function \[\dim: H \mapsto \dim_k V^H.\] If $k$ has characteristic 0, this induces a group homomorphism $R_k(G) \to CF(G)$. 
    
    \begin{theorem}{\cite[Theorem 5.4 and Theorem 5.13, pages 211 and 216]{tD79}}
        Let $G$ be a $p$-group. The image of $\dim: R_\R(G) \to CF(G)$ is the group of Borel-Smith functions $CF_b(G)$. Moreover, if $f$ is a decreasing Borel-Smith function, there exists a real representation $V$ for which $\dim(V) = f$. 
    \end{theorem}
    
    \begin{corollary}
        Let $G$ be a $p$-group group and suppose $f$ is a Borel-Smith function. Then $f$ can be expressed as the difference of two decreasing Borel-Smith functions. In particular, every endotrivial complex is isomorphic in $K^b({}_{kG}\triv)$ to the product of a decreasing endotrivial complex and an increasing endotrivial complex (equivalently, the dual of a decreasing endotrivial complex). 
    \end{corollary}
    \begin{proof}
        This follows from \cite[Theorem 5.4, page 211]{tD79} since the dimension function of any representation is decreasing.
    \end{proof}

    \begin{theorem}
        Let $V_1, \dots, V_n$ denote the irreducible real representations of $G$. Then the set of associated dimension functions $\{f_1,\dots, f_n\}$, after removing duplicates, forms a $\Z$-basis of $CF_b(G)$. In particular, this also determines an associated canonical $\Z$-basis of $\calE_k(G)$.
    \end{theorem}
    \begin{proof}
        Since the image of the dimension homomorphism is precisely $CF_b(G)$, it suffices to show that the set $\calB := \{f_1, \dots, f_n\}$ is linearly independent after removing duplicates.  \cite[Proposition 5.9, page 213]{tD79} asserts that $\ker(\dim)$ is generated by elements of the form $V - \psi^k(V)$, where $V$ is an irreducible real representation, $\psi^k$ is the $k$-th Adams operation, and $k$ is coprime to $|G|$. Therefore, the duplicates in $\calB$ arise from Adams operation conjugates, and it follows that any set of real irreducible representations which are not Adams operation conjugates will correspond to a linearly independent set of Borel-Smith functions.  
    \end{proof}

    \begin{remark}
        The bases given for $\calE_k(G)$ in \cite[Section 6]{SKM23} for dihedral, semidihedral, and quaternion 2-groups, and elementary abelian $p$-groups, coincide exactly with the canonical basis of $\calE_k(G)$ detailed here. Moreover, the endotrivial complexes used to generate the twisted cohomology ring in \cite[Section 3]{BG23} is also precisely the canonical basis of $\calE_k(G).$ Those bases were determined in an ad-hoc manner, suggesting that in fact, there is some heuristic for constructing these decreasing canonical basis elements of $\calE_k(G).$

    \end{remark}

    \section{Transporting rational $p$-biset functor structure} \label{sec:bisetstructure}

    The bottom row of the isomorphism of short exact sequences in Theorem \ref{bigtheorem} arises from the short exact sequence of rational $p$-biset functors shown in \cite[Theorem 1.2]{BoYa06}, \[0 \to CF_b \to B^* \to D^\Omega \to 0,\] where $B^*$ is identified with $CF$. We can transport the structure of those biset functors to the top row, thus realizing $\calE_k^{V(\calF)}$ as a rational $p$-biset functor canonically isomorphic to the rational $p$-biset functor $C$ and $\calE_k(G)$ as a subfunctor which is canonically isomorphic to the rational $p$-biset functor $CF_b$, both via the h-mark homomorphism $h$.

    For this section, since we are concerned only with rational $p$-biset functors, all groups considered are finite $p$-groups.

    \begin{definition}
        Let $\calC$ denote the full subcategory of the biset category consisting of $p$-groups.
        \begin{enumerate}
            \item We define the biset functor \[\calE_k^{V(\calF)}: \calC \to \mathbf{Ab}\] by the assignment $G \mapsto \calE_k^{V(\calF_G)}(G)$. For any $(H,K)$-biset $U$, the action of $U$ on $\calE_k^{V(\calF_K)}(K)$ is given by \[h_H\inv \circ T_U \circ h_K,\] where $h_K: \calE_k^{V(\calF_K)}(K) \xrightarrow{\sim} CF(K)$ and $h_H: \calE_k^{V(\calF_H)}(H) \xrightarrow{\sim} CF(H)$ denote the h-mark isomorphisms over $K$ and $H$ respectively and $T_U$ is the generalized induction defined for $C$. Since $C$ is a rational $p$-biset functor and  $\calE_k^{V(\calF)}\cong C$ by construction, $\calE_k^{V(\calF)}$ is a rational $p$-biset functor.

            \item We define the biset subfunctor \[\calE_k: \calC \to \mathbf{Ab}\] of $\calE_k^{V(\calF)}$ by the assignment $G \mapsto \calE_k(G)$. The biset action on $\calE_k$ is the same as before, since $CF_b$ is a subfunctor of $C$. 
        \end{enumerate}

    \end{definition}

    In \cite[Proposition 4.3]{SKM23}, we determined what effects restriction, inflation, and the Brauer construction have on h-marks of endotrivial complexes. For $\calE_k$, the biset functor structure coincides with those computations.

    \begin{theorem}
        Let $H \leq G$ and $N \trianglelefteq G.$
        \begin{enumerate}
            \item Consider the inflation biset $\text{inf}_{G/N}^G := {}_{G}G/N_{G/N}$. The operation \[\text{inf}_{G/N}^G: \calE_k(G/N) \to \calE_k(G)\] coincides with the usual inflation homomorphism $\Inf_{G/N}^G: \calE_k(G/N) \to \calE_k(G)$.

            \item Consider the restriction biset $\text{res}_{H}^G:= {}_{H}G_G$. The operation \[\text{res}_{H}^G: \calE_k(G) \to \calE_k(H)\] coincides with the usual restriction $\Res_{H}^G: \calE_k(G) \to \calE_k(H)$.

            \item Consider the deflation biset $\text{def}_{G/N}^G := {}_{G/N}G/N_{G}.$ The operation \[\text{def}_{G/N}^G: \calE_k^{V(\calF_{G})}(G) \to \calE_k^{V(\calF_{G/N})}(G/N)\] coincides with the Brauer construction $-(N): \calE_k(G) \to \calE_k(G/N)$.
        \end{enumerate}
    \end{theorem}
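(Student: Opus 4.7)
The plan is to reduce all three assertions to equalities of h-marks. Since the biset functor structures on $\calE_k^{V(\calF)}$ and on its subfunctor $\calE_k$ are \emph{defined} by transport through the h-mark isomorphism $h\colon \calE_k^{V(\calF_K)}(K) \xrightarrow{\sim} C(K,p)$, which is an isomorphism for $p$-groups by Theorem \ref{thm:basisofcalEVFG}, any element of the target is uniquely determined by its image in $C(-,p)$. Thus, for each of the three natural operations (inflation, restriction, Brauer construction), it suffices to show that applying the h-mark homomorphism to the natural operation produces the same class function as applying $T_U$ to the original h-marks.

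The first step is to compute $T_U$ for each biset. All three bisets in question are right-transitive (as $G$ acts transitively on $G$ and $G/N$ on $G/N$), so the double coset space $K\backslash U/(\text{right group})$ consists of a single class, which we may represent by (the image of) $1$. Unpacking the definition $K^u = \{g \mid \exists k\in K,\ ku = ug\}$ yields
\[
\begin{aligned}
T_{\text{inf}^G_{G/N}}(f)(K) &= f(KN/N) \qquad\text{for } K \leq G,\\
T_{\text{res}^G_H}(f)(K) &= f(K) \qquad\text{for } K \leq H,\\
T_{\text{def}^G_{G/N}}(f)(\bar K) &= f(\tilde K) \qquad\text{for } \bar K \leq G/N,
\end{aligned}
\]
where $\tilde K$ denotes the preimage of $\bar K$ under $G \twoheadrightarrow G/N$. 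These formulas match verbatim the h-mark computations of \cite[Proposition 4.3]{SKM23}: $h_{\Inf^G_{G/N} C}(K) = h_C(KN/N)$, $h_{\Res^G_H C}(K) = h_C(K)$, and (via associativity of the Brauer construction, $(C(N))(\bar K) = C(\tilde K)$) $h_{C(N)}(\bar K) = h_C(\tilde K)$. Injectivity of $h$ then promotes these equalities of h-marks into equalities of elements in $\calE_k^{V(\calF)}$.

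I expect the only genuine subtlety to be in part (c), where the deflation biset a priori acts as a homomorphism $\calE_k^{V(\calF_G)}(G) \to \calE_k^{V(\calF_{G/N})}(G/N)$, while the Brauer construction $-(N)$ is stated with codomain $\calE_k(G/N)$. To reconcile, I would invoke that $C_b$ is a rational $p$-biset subfunctor of $C$ (\cite[Proposition 3.7]{BoYa06}), so $T_{\text{def}^G_{G/N}}$ automatically restricts to a map $C_b(G,p) \to C_b(G/N,p)$; translated back through Theorem \ref{bigtheorem}, this means the deflation biset restricts to a homomorphism $\calE_k(G) \to \calE_k(G/N)$, and the h-mark comparison above then identifies this restriction with $-(N)$.
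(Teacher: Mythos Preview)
Your proposal is correct and follows essentially the same approach as the paper: reduce to h-marks (since the biset structure is defined by transport through $h$), compute $T_U$ explicitly for each of the three right-transitive bisets, and match against the h-mark formulas from \cite[Proposition 4.3]{SKM23}. Your additional remark reconciling the codomain in part (c) via the subfunctor $C_b$ is a nice touch that the paper's proof omits.
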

    \begin{proof}
        In \cite[Proposition 4.3]{SKM23}, the effects of restriction, inflation, and the Brauer construction on h-marks are computed. It suffices to show that these biset operations coincide with those computations.
        \begin{enumerate}
            \item Let $L \leq G$ and $f \in CF(G/N)$. We compute:
            \begin{align*}
                (T_{\text{inf}^G_{G/N}} f)(L) &=  \sum_{u \in [L\backslash {}_{G}G/N_{G/N} / G/N]} f(L^u)\\
                \intertext{We set $u = N \in G/N$ since $\inf^G_{G/N}$ is transitive as right $G/N$-set. Then, $L^u = \{gN \in G/N \mid \exists l\in L, lN = gN\} = LN/N$.}
                &= f(LN/N).
            \end{align*}
            This coincides with the description of \cite[Proposition 4.3(c)]{SKM23}, which shows that the h-mark homomorphism of $\Inf^G_{G/N} C$ is $h_{\Inf^G_{G/N} C}(H) = h_{C}(HN/N)$ for any $H \leq G$.

            \item Let $L \leq H$ and $f \in CF(G)$. We compute:
            \begin{align*}
                (T_{\text{res}^G_H} f)(L) &= \sum_{u \in [L\backslash {}_HG_G/G]} f(L^u)\\
                \intertext{We set $u = 1 \in G$, since $\text{res}^G_H$ is transitive as a right $G$-set. Then, $L^1 = \{g \in G \mid \exists l\in L, l=g\}$.}
                &= f(G/L).
            \end{align*}
            This coincides with the description of \cite[Proposition 4.3(a)]{SKM23}, which shows that the h-mark homomorphism of $\Res^G_H C$ is the restriction $\Res^G_H h_C$.

            \item Let $L/N \leq G/N$ and $f \in CF(G)$. We compute:
            \begin{align*}
                (T_{\text{def}^G_{G/N}} f)(L/N) &= \sum_{u \in [(L/N)\backslash {}_{G/N}G/N_G / G]} f((L/N)^u)\\
                \intertext{We set $u = N \in G/N$ since $\text{def}^G_{G/N}$ is transitive on the right. Then, $(L/N)^{N} = \{g \in G \mid \exists lN \in L/N, lN = gN\} = L$, since $N \trianglelefteq L$.}
                &= f(L).
            \end{align*}
            This coincides with the description of \cite[Proposition 4.3(b)]{SKM23}, which shows the h-mark homomorphism of $C(N)$ is $h_{C(N)}(L/N) = h_C(L)$ for any $L \trianglerighteq N$.
        \end{enumerate}
    \end{proof}

    \begin{remark}
        Informally, the endotrivial complex arising from a real representation $V$ arises from the Bredon cohomology complex associated to the representation sphere $S^V$, see for instance \cite{Br67}. Indeed, the h-mark at $H\leq G$ of an endotrivial complex $C$ associated to a real representation $V$ counts the dimension of $V^H$, or equivalently, of $S^{(V^H)}$. Since the dimension function $\dim: R_\R(G) \to CF_b(G)$ is a biset functor, it follows that the induction biset $\text{ind} = {}_GG_H$ corresponds to the topological norm map $N^G_H$, see for instance \cite[Section A.4]{HHR16}. Indeed, we have an isomorphism \[N^G_H S^V \cong S^{\Ind^G_H V},\] see for instance \cite[Proposition A.59]{HHR16}. The author greatly thanks Martin Gallauer for this observation and reference. 
    
        For class functions, we have a formula \[(T_{\text{ind}^G_H} f)(L) = \sum_{x \in [L\backslash G/ H]} f(H\cap L^x).\]
        We note induction does not correspond to the usual tensor induction of chain complexes (see \cite[Section 4.1]{Be982} for a description, as well as a description of the norm map for Ext). In fact, tensor induction of chain complexes does not preserve homotopy equivalence, therefore it does not preserve endotriviality in general. See \cite[Remark 5.2]{SKM23} for a basic example.

    \end{remark}

    \begin{remark}
        By the exact same computations, we can see what effect the restriction,
        inflation, and deflation bisets have on the h-marks of a
        $V(\calF_G)$-endosplit-trivial chain complex.
        These, however, do not coincide with the usual maps $\Res^G_H$,
        $\Inf^G_{G/N}$, or $-(N)$ on the group $\calE_k^{V(\calF_G)}(G)$.
        For instance, the restriction biset $\text{res}^G_H$ induces
        a group homomorphism
        \[\text{res}^G_H: \calE_k^{V(\calF_G)}(G) \to \calE_k^{V(\calF_H)}(H),\]
        but the group homomorphism induced by restriction of modules is of the form
        \[\Res^G_H: \calE_k^{V(\calF_G)}(G) \to \calE_k^{\Res^G_H V(\calF_G)}(H).\]
        If $H$ does not contain a Sylow $p$-subgroup of $G$ as a subgroup,
        ${}_{kH}Proj(\Res^G_H V) = {}_{kG}\catmod$, and the restriction of a
        $V(\calF_G)$-endosplit-trivial chain complex may not be
        $V(\calF_H)$-endosplit trivial.
        For an example, if $G = D_{16}$, the chain complex
        \[C_{H} = k[D_{16}/H] \to k,\] where $H$ is any subgroup of order $4$
        satisfying $Z(D_{16}) \leq H$ and the differential is the augmentation homomorphism, satisfies
        \[\Res^{D_{16}}_{Z(D_{16})} C_H = k\oplus k \oplus k \oplus k \to k,\]
        which is not a $V(\calF_{C_2})$-endosplit-trivial complex.
        On the other hand, one computes via h-marks that
        \[\operatorname{res}^G_{Z(D_{16})} C_H = C_{Z(D_{16})}.\]

        A similar issue arises for deflation, since in general, $\big(V(\calF_G)\big)(N)$ is not an absolutely $p$-divisible $kG$-module. For instance, if $G = C_4$, then \[V(\calF_G) = k[C_4/1] \oplus k[C_4/C_2],\] therefore \[\big(V(\calF_G)\big)(C_2) \cong k\oplus k.\]

        Inflation has a slightly different issue which arises. In general, inflation preserves absolute $p$-divisibility, but ${}_{kG}Proj(\Inf^G_{G/N} V(\calF_{G/N})) \subseteq {}_{kG}Proj(V(\calF_G))$. For instance, let $G = V_4$ with 3 nonconjugate subgroups of order 2 $H_1, H_2, H_3$. Then, \[V(\calF_{G/H_1}) = k[(G/H_1)/(H_1/H_1)], \text{ and } \Inf^G_{G/H_1} V(\calF_{G/H_1}) = G/H_1.\] On the other hand, \[V(\calF_G) = k[G/H_1] \oplus k[G/H_2] \oplus k[G/H_3] \oplus k[G/1].\] Therefore, $k[G/H_2]$ and $k[G/H_3]$ are $V(\calF_G)$-projective but not $\Inf^G_{G/H_1} V(\calF_{G/H_1})$-projective.
    \end{remark}

    \section{Results for arbitrary finite groups}

    Using the results of the previous sections, we consider the case of $G$
    not necessarily a $p$-group. We complete the classification of endotrivial complexes and determine the kernel of the Bouc homomorphism. 

    \subsection{The classification of endotrivial complexes}

    \begin{definition}
        Let $V$ be a (possibly zero) absolutely $p$-divisible $kG$-module and let $H \leq G$ be a subgroup containing a Sylow $p$-subgroup $S$. We define the \textit{$G$-stable} stable subgroup $\calE_k^{V}(H)^G \leq \calE_k^V(H)$ as follows. \[\calE_k^{V}(H)^G = \{[C] \in \calE_k^V(H) \mid \text{For all } P,Q\leq H \text{ with } P =_G Q, h_C(P) = h_C(Q) \}.\]
    \end{definition}

    \begin{theorem}\cite[Theorem 12.6]{SKM24a}\label{thm:imresmap}
        Let $S \in \Syl_p(G)$. Then the group homomorphism $\Res^G_S: \calE_{k}(G) \to \calE_k(S)^G$ is surjective. Moreover, we have a split exact sequence of abelian groups \[0 \to \Hom(G.k^\times) \to \calE_k(G) \xrightarrow{\Res^G_S} \calE_k(S)^G \to 0.\]
    \end{theorem}

    Since this is a fairly nontrivial and consequential result, we reprove this theorem here for the reader's convenience. 

    \begin{lemma}\cite[Theorem 12.3]{SKM24a}\label{lem:inductionbrauercommute}
        Let $P\in s_p(G)$, $H \leq G$, and $M$ be a $p$-permutation $kH$-module. We have the following natural isomorphism of $kN_G(P)$-modules, regarding the Brauer construction as a functor $-(P): {}_{kG}\triv \to {}_{kN_G(P)}\triv$: \[\left(\Ind^G_H M\right)(P) \cong \bigoplus_{x \in [N_G(P)\backslash G / H], P\leq {}^xH} \Ind^{N_G(P)}_{N_G(P)\cap {}^xH}\big( ({}^xM)(P) \big).\]
        In particular, since the isomorphism is natural, it also holds in the category $K^b({}_{kN_G(P)}\triv)$.
    \end{lemma}

    \begin{proof}
        First, we have $(\Ind^G_H M)(P) = (\Res^G_{N_G(P)} \Ind^G_H M)(P)$. Applying the Mackey formula yields the following natural isomorphism of $kN_G(P)$-modules:

        \begin{align*}
            (\Res^G_{N_G(P)} \Ind^G_H M)(P) &\cong \left(\bigoplus_{x \in [N_G(P)\backslash G / H]} \Ind^{N_G(P)}_{N_G(P) \cap {}^xH} \Res^{{}^xH}_{N_G(P) \cap {}^xH} ({}^xM)\right)(P)\\
            &\cong \bigoplus_{x \in [N_G(P)\backslash G / H]} \left(\Ind^{N_G(P)}_{N_G(P) \cap {}^xH} \Res^{{}^xH}_{N_G(P) \cap {}^xH}({}^xM)\right)(P)
        \end{align*}
        Given a double coset representative $x$, each indecomposable constituent of $\Ind^{N_G(P)}_{N_G(P) \cap {}^xH} \Res^{{}^xH}_{N_G(P) \cap {}^xH} ({}^xM)$ has a vertex contained in $N_G(P)\cap {}^xH$, so if $\left(\Ind^{N_G(P)}_{N_G(P) \cap {}^xH} \Res^{{}^xH}_{N_G(P) \cap {}^xH} ({}^xM)\right)(P) \neq 0$, then $N_G(P) \cap {}^xH \geq_{N_G(P)} P$, and since $P \trianglelefteq N_G(P)$, this occurs if and only if $P \leq {}^xH$. Therefore, all terms in the direct sum that do not satisfy $P \leq {}^xH$ are zero, and we have
        \[(\Ind^G_H M)(P) \cong \bigoplus_{x \in [N_G(P)\backslash G / H], P\leq {}^xH} \left(\Ind^{N_G(P)}_{N_G(P) \cap {}^xH} \Res^{{}^xH}_{N_G(P) \cap {}^xH}({}^xM)\right)(P).\]
        We now compute the internal term when $P \leq {}^xH$. First, since $P \trianglelefteq N_G(P)$,
        \[ \Ind^{N_G(P)}_{N_G(P) \cap {}^xH} ({}^xM)^P = \left(\Ind^{N_G(P)}_{N_G(P) \cap {}^xH} \Res^{{}^xH}_{N_G(P) \cap {}^xH}{}^xM\right)^P.\]
        Indeed, for any $g \in N_G(P)$, and $m \in {}^xM$, $p\cdot (g\otimes m) = g\otimes m $ if and only if $g \otimes p^g m = g\otimes m$ for all $p \in P$, and the equality follows from there.
        We next claim \[\sum_{Q < P}\tr^P_Q \left(\Ind^{N_G(P)}_{N_G(P) \cap {}^xH} \Res^{{}^xH}_{N_G(P) \cap {}^xH} {}^xM\right)^Q = \Ind^{N_G(P)}_{N_G(P) \cap {}^xH} \left(\sum_{Q < P} \tr^P_Q ({}^xM)^Q\right).\]
        Indeed, for any $Q < P$, $g \otimes m \in  \left(\Ind^{N_G(P)}_{N_G(P) \cap {}^xH} \Res^{{}^xH}_{N_G(P) \cap {}^xH} {}^xM\right)^Q$ if and only if $m \in (^xM)^{Q^g}$. Now given coset representatives $p_1, \dots, p_k$ of $P/Q$, $(p_1 + \dots + p_k)g \otimes m = g \otimes (p_1^g + \dots + p_k^g)m$, and since $p_1^g, \dots,  p_k^g$ are coset representatives of $P/Q^g$, we have an equality of sets. Finally, for arbitrary groups $H\leq G$ and appropriate modules $N \subseteq M$, $\Ind^G_H M/N \cong \Ind^G_H M / \Ind^G_H N$ naturally by exactness of additive induction. We now compute, assuming $P \leq {}^xH$ and setting $N_{{}^xH}(P) = N_G(P)\cap {}^xH$:

        \begin{align*}
            \left(\Ind^{N_G(P)}_{N_{{}^xH}(P)} \Res^{{}^xH}_{N_{{}^xH}(P)}({}^xM)\right)(P) &= \left(\Ind^{N_G(P)}_{N_{{}^xH}(P)} \Res^{{}^xH}_{N_{{}^xH}(P)}{}^xM\right)^P \\
            &/ \sum_{Q < P}\tr^P_Q \left(\Ind^{N_G(P)}_{N_{{}^xH}(P)} \Res^{{}^xH}_{N_{{}^xH}(P)} {}^xM\right)^Q \\
            &= \Ind^{N_G(P)}_{N_{{}^xH}(P)} ({}^xM)^P/\Ind^{N_G(P)}_{N_{{}^xH}(P)} \left(\sum_{Q < P} \tr^P_Q ({}^xM)^Q\right)\\
            &\cong \Ind^{N_G(P)}_{N_{{}^xH(P)}}\left(({}^xM)^P/ \left(\sum_{Q < P} \tr^P_Q ({}^xM)^Q\right) \right)\\
            &= \Ind^{N_G(P)}_{N_{{}^xH(P)}}\big(({}^xM)(P)\big)
        \end{align*}

        This completes the proof of the isomorphism for modules, and since all the isomorphisms used were natural, the result follows.
    \end{proof}

    \begin{proof}[Proof (of Theorem \ref{thm:imresmap})]
        Let $D \in \calE_k(S)^G$ and assume without loss of generality that $D$ is indecomposable. We have $\calH(D) = [k]$. Since $D$ is an endosplit $p$-permutation resolution for $k$ as $kS$-module and is $G$-stable, it follows from Lemma \ref{lem:inductionbrauercommute} that for all $p$-subgroups $P$ of $G$, $(\Ind^G_S D)(P)$ has homology concentrated in exactly one degree. Hence $\Ind^G_S D$ is an endosplit $p$-permutation resolution for $\Ind^G_S H(C) \cong k[G/S]$, and moreover, for all $P \in s_p(S)$, $h_C(P)$ is the unique value of $i$ for which $H_i((\Ind^G_S C)(P)) \neq 0$, again by Lemma \ref{lem:inductionbrauercommute}. Now, $k$ is a direct summand of $k[G/S]$, therefore there is a direct summand $C \mid \Ind^G_H D$ which is an endosplit $p$-permutation resolution for $k$ as $kG$-module (see \cite[Proposition 7.11.2]{L182}), hence an endotrivial complex, and whose h-marks coincide with those of $D$. Therefore, $\Res^G_S C$ has an indecomposable direct summand isomorphic to $D$. Thus $\calE_k(S)^G \leq \im(\Res^G_S) $, and by $G$-stability, this must be an equality. 

        For the last statement, $\ker(\Res^G_S)$ consists of all endotrivial complexes with h-marks all zero. However, this is simply $\ker(h)$, which is the torsion subgroup of $\calE_k(G)$, isomorphic to $\Hom(G,k^\times)$. Therefore, the inclusion $\Hom(G,k^\times) \to \calE_k(G)$ splits with retraction $\calE_k(G) \to \Hom(G,k^\times)$ given by $\calH: [C] \mapsto H_{h_C(1)}(C) \in \Hom(G,k^\times)$.

    \end{proof}

    Armed with this crucial fact, we are now ready to complete the classification. 

    \begin{corollary}\label{thm:borelsmithfornonpgroups}
        Let $G$ be a finite group. We have a split exact sequence of abelian groups \[0 \to \Hom(G,k^\times) \to \calE_k(G) \xrightarrow{h} CF_b(G,p) \to 0.\] 
    \end{corollary}
    \begin{proof}
        \cite[Remark 3.8]{SKM23} states that we have a split exact sequence \[0 \to \Hom(G,k^\times) \to \calE_k(G) \xrightarrow{h} \im(h) \to 0,\] with retract given by $\calH: \calE_k(G) \to \Hom(G,k^\times)$, so it suffices to determine $\im(h).$ We show $\im(h) = CF_b(G,p).$ 
        
        Since $\Res^G_S: \calE_k(G) \to \calE_k(S)^G$ is surjective, given any Borel-Smith function $f \in CF_b(G,p)$ and a Sylow $p$-subgroup $S$ of $G$, $\Res^G_S f$ is a $G$-stable Borel-Smith function for $S$. By Corollary \ref{bigtheorem}, there exists an endotrivial complex $D$ of $kS$-modules with $h_D = \Res^G_S f$, and by surjectivity of restriction, there exists an endotrivial complex $C$ of $kG$-modules with $[\Res^G_S C] = [D]$ in $\calE_k(S)$. Therefore, $h_C = f$, so $CF_b(G,p) \leq \im(h).$ Now, suppose $C$ is an endotrivial complex of $kG$-modules, with h-mark function $h_C$. Then $\Res^G_S h_C$ is necessarily a Borel-Smith function, but observe that $\Res^G_S: CF_b(G,p) \to CF_b(S)^G$ is in fact an isomorphism, since every $p$-subgroup of $G$ is conjugate to a subgroup of $S$. Therefore, $h_C$ is a Borel-Smith function, and we have the equality $CF_b(G,p) = \im(h)$.  
    \end{proof}

    \subsection{The kernel of the Bouc homomorphism}

    \begin{remark}
        Theorem \ref{thm:borelsmithfornonpgroups} gives us all possible h-marks which arise from endotrivial complexes, and Theorem \ref{thm:hmarktocomplex} gives a recipe for reconstructing an endotrivial complex from its h-marks, up to a twist by a strongly capped indecomposable endo-$p$-permutation $kG$-module. In the $p$-group case, this information tells us how to reconstruct a complex, since the only strongly capped indecomposable endo-$p$-permutation $kG$-module is $k$. However, in the non-$p$-group case, the situation is less clear.

        Recall in Theorem \ref{thm:hmarktocomplex}, the setup was as follows: for $P \in [s_p(G)]$, we set \[b_P:= \sum_{Q \in [s_p(G)]} h_{C}(Q)\mu(P,Q).\]

        Then, the chain complex \[D := \bigotimes_{Q \in [s_p(G)]} C_Q^{b_Q}\] is $V(\calF_G)$-endosplit-trivial. One goal is to determine when $D$ has an endotrivial summand, and if not, a $M \in T_{V(\calF_G)}(G,S)$ for which $M[0] \otimes_k D$ has an endotrivial summand. An even stronger question we can ask is whether $D \in \ker (\calH): \calE_k^{V(\calF_G)}(G) \to D^\Omega_k(G)+T_{V(\calF_G)}(G,S).$ Observe that $D$ is generated by a product of chain complexes representing relative syzygies. This motivates the following definition.
    \end{remark}

    \begin{definition}\label{def:subgroupgenbyrelsyzygies}
        Define the subgroup ${}^\Omega\calE_k^{V(\calF_G)}(G) \leq \calE_k^{V(\calF_G)}(G)$ by \[{}^\Omega\calE_k^{V(\calF_G)}(G) := \langle [C_P] \mid P \in s_p(G) \rangle.\] Define the subgroup $\calE_k^\Omega(G) \leq \calE_k(G) \leq \calE_k^{V(\calF_G)}(G)$ by \[\calE_k^\Omega(G) := {}^\Omega\calE_k^{V(\calF_G)}(G) \cap \calE_k(G).\] Finally, define $\calT\calE_k^\Omega(G) := \calT\calE_k(G) \cap \calE_k^\Omega(G)$. It is straightforward that \[\calH^\Omega: {}^\Omega\calE_k^{V(\calF_G)} \to D_k^\Omega(G)\] is a well-defined surjective group homomorphism, and moreover, $\ker(\calH^\Omega) = \calT\calE_k^\Omega(G)$. We have a short exact sequence \[0 \to \calT\calE_k^\Omega(G) \to  {}^\Omega\calE_k^{V(\calF_G)}(G) \to D^\Omega(G) \to 0,\] where $\calH^\Omega$ is the surjection.
    \end{definition}

    Rephrasing the previous remark, we wish to determine $\calE_k^\Omega(G)$ or $\calT\calE_k^\Omega(G)$. One way we can do so is by returning to the Bouc homomorphism $\Psi: CF(G,p) \to D^\Omega(G)$, in the more general case of non-$p$-groups. In this case, we no longer necessarily have an isomorphism $CF_b(G,p)\cong \ker(\Psi)$.

    Gelvin and Yal\c{c}in describe in \cite{GeYa21} constraints for $\ker(\Psi)$.

    \begin{definition}{\cite[Definitions 9.6, 9.7]{GeYa21}}\label{def:orientedartin}
        A function $f \in CF(G,p)$ satisfies the \textit{oriented Artin condition} if for any distinct prime numbers $p$ and $q$, consider $L \triangleleft K \triangleleft H \leq N_G(L)$ subgroups of $G$ such that $K$ is a cyclic $p$-group, $K/L \cong \Z/p$, and $H/K \cong \Z/q^r$. Then $f(L) \equiv f(K) \mod 2q^{r-l}$, where $H/K$ acts on $K/L$ with kernel of order $q^l$.

        A \textit{oriented Artin-Borel-Smith function} is a superclass function that satisfies the Borel-Smith conditions and the oriented Artin condition. The subgroup of $CF(G,p)$ consisting of oriented Artin-Borel-Smith functions is denoted by $CF_{ba^+}(G,p)$.
    \end{definition}

    If $p = 2$, then any Borel-Smith function vacuously satisfies the Artin condition, since there is no nontrivial automorphism of $K/L \cong \Z/2$.

    \begin{theorem}{\cite[Theorem 9.10]{GeYa21}}
        Let $G$ be a finite group and $\Psi: CF(G,p) \to D^\Omega(G)$ be the Bouc homomorphism. Then \[CF_{ba^+}(G,p) \subseteq \ker (\Psi) \subseteq CF_b(G,p).\]
        In particular, if $p = 2$, $\ker(\Psi) = CF_b(G,p)$.
    \end{theorem}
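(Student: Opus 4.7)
The plan is to treat the two inclusions separately, and then note that the $p=2$ assertion is a consequence of the first inclusion together with the fact that the oriented Artin condition is vacuous in that case.

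For the inclusion $\ker \Psi \subseteq C_b(G,p)$, I would reduce to the $p$-group case via restriction to a Sylow $p$-subgroup $S \in \Syl_p(G)$. The first step is to verify that the square
\begin{equation*}
\begin{tikzcd}
C(G,p) \ar[r, "\Psi_G"] \ar[d, "\Res^G_S"'] & D^\Omega_k(G) \ar[d, "\Res^G_S"] \\
C(S,p) \ar[r, "\Psi_S"'] & D^\Omega(S)
\end{tikzcd}
\end{equation*}
commutes; since $\Psi_G$ is defined by $\Z$-linearizing $\omega_Q \mapsto \Omega_Q$ (with the Sylow case sent to $[k]$), this reduces to the Mackey-type formula $\Res^G_S \omega_Q = \sum_{x \in [S\backslash G/Q]} \omega_{S \cap {}^xQ}$ on the left and the analogous decomposition $\Res^G_S \Omega_Q = \sum \Omega_{S \cap {}^xQ}$ on the right, both of which are direct calculations. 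Granting this, if $f \in \ker \Psi_G$ then $\Res^G_S f \in \ker \Psi_S = C_b(S,p)$ by the Bouc--Yal\c{c}in theorem, and by the definition of $C_b(G,p)$ adopted in this paper this exactly says $f \in C_b(G,p)$.

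The harder inclusion is $C_{ba^+}(G,p) \subseteq \ker \Psi$. My plan is to show that any $f \in C_{ba^+}(G,p)$ can be written as a $\Z$-linear combination of basis functions $\omega_Q$ whose image under $\Psi$ vanishes in $D^\Omega(G)$. The Borel-Smith conditions already handle the $p$-local relations among $\Omega_Q$ inside $D^\Omega(S)$; the additional oriented Artin condition is needed precisely to enforce the extra relations that appear in $D^\Omega(G)$ coming from $G$-fusion that is not visible in $D^\Omega(S)$. Concretely, for each chain $L \triangleleft K \triangleleft H \leq N_G(L)$ with $K$ cyclic $p$-group, $K/L \cong \Z/p$, and $H/K \cong \Z/q^r$ acting on $K/L$ with kernel of order $q^l$, one writes down the expected relation between $\Omega_L$ and $\Omega_K$ that $f$ must respect, and shows it is exactly the congruence $f(L) \equiv f(K) \pmod{2q^{r-l}}$. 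I expect this step to be the main obstacle: identifying the relevant sections $(L,K,H)$ and verifying that the oriented Artin congruences are precisely the defining relations of $D^\Omega(G)$ modulo $D^\Omega(S)$-relations requires a careful computation, likely using Bouc's tensor induction formulas for relative syzygies and the detection of $D^\Omega(G)$ by its images in $D^\Omega(Q)$ for $Q$ running over sections of $G$.

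Finally, when $p = 2$ the subgroup $\Aut(K/L) = \Aut(\Z/2)$ is trivial, so $l = r$ and the modulus $2q^{r-l}$ equals $2$; the congruence $f(L) \equiv f(K) \pmod 2$ is then already implied by the Borel-Smith condition applied to the section $(K,L)$ with $K/L \cong \Z/2$. Therefore $C_{ba^+}(G,2) = C_b(G,2)$, and combining this equality with the two inclusions forces $\ker \Psi = C_b(G,2)$.
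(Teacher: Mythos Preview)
This theorem is not proved in the paper: it is cited verbatim as \cite[Theorem 9.10]{GeYa21}, with no argument supplied. The paper uses it as a known input, and its own contribution (Theorem~\ref{thm:bouchomker}) is the \emph{reverse} inclusion $\ker\Psi \subseteq C_{ba^+}(G,p)$, established via the endotrivial-complex identification of Proposition~\ref{prop:restshortexseqiso} together with the Boltje--Carman description of $O(T(kG))$. So there is no ``paper's own proof'' of this particular statement to compare your attempt against.

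On the content of your sketch: the argument for $\ker\Psi \subseteq C_b(G,p)$ by restricting to a Sylow $p$-subgroup and invoking the $p$-group case of Bouc--Yal\c{c}in is the natural one and should go through. For the harder inclusion $C_{ba^+}(G,p) \subseteq \ker\Psi$, however, your plan does not yet contain a genuine strategy: asserting that the oriented Artin congruences ``are precisely the defining relations of $D^\Omega(G)$ modulo $D^\Omega(S)$-relations'' is a rephrasing of the conclusion, not a method for establishing it; you would need to consult \cite{GeYa21} for the actual argument. Your $p=2$ paragraph also has a gap: the Borel--Smith parity condition as stated in the paper applies only to sections $T/S$ that are cyclic of \emph{odd} prime order or of order $4$, not of order $2$, so it does not directly yield $f(L)\equiv f(K)\pmod 2$ when $K/L\cong\Z/2$. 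The paper's remark that the oriented Artin condition is vacuous for $p=2$ presumably rests on a convention in \cite{GeYa21} (for instance that only faithful, hence nontrivial, actions of $H/K$ on $K/L$ are considered), which you would need to verify there.
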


    In \cite{GeYa21}, Gelvin and Yal\c{c}in asked if in general, $CF_{ba^+}(G,p) = \ker(\Psi)$; they remark that they could find no counterexamples to the claim. Moreover, a nontrivial example was provided in which $ CF_{ba^+}(G,p) = \ker(\Psi) \subset CF_b(G,p)$.

    \begin{lemma}{\cite[Lemma 9.11]{GeYa21}}\label{lem:gelvinyalcinlemma}
        Let $Q\trianglelefteq G$ be a cyclic subgroup of $G$ of order $p$ such that $G/Q \cong \Z/q^r$. Suppose that $G/Q$ acts on $Q$ with kernel order $q^l$. Then $D^\Omega(G) \cong \Z/2q^{r-l}$ and the equality $\ker(\Psi) = CF_{ba^+}(G,p)$ holds.
    \end{lemma}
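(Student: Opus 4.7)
The plan is to reduce the entire lemma to a single order computation in $D_k(G)$. Since $Q \trianglelefteq G$ is the unique Sylow $p$-subgroup and has order $p$, we have $s_p(G) = \{1, Q\}$, so by Theorem \ref{thm:basisofcalEVFG}, $C(G,p) \cong \Z^2$ with basis $\{\omega_1, \omega_Q\}$. By Definition \ref{def:bouchom}, $\Psi(\omega_Q) = [k]$ and $\Psi(\omega_1) = \Omega_{G/1}$, so $D^\Omega(G) = \langle \Omega_{G/1} \rangle$ is cyclic. Letting $d$ denote the order of $\Omega_{G/1}$ in $D_k(G)$, we obtain $\ker\Psi = d\Z\omega_1 \oplus \Z\omega_Q$, which has index $d$ in $C(G,p)$. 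Hence the lemma reduces to showing $d = 2q^{r-l}$ and that $[C(G,p) : C_{ba^+}(G,p)] = 2q^{r-l}$.

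For the order $d$, I would bound it from both sides. Let $H \leq G$ be the preimage of the kernel of the conjugation action $G/Q \to \Aut(Q)$, so $H \cong Q \times \Z/q^l$ and $[G:H] = q^{r-l}$. For the upper bound $d \mid 2q^{r-l}$, I would manipulate relative syzygies in $D^\Omega(G)$: tensoring $\Delta(G/1)$ with permutation factors indexed by $G/H$-orbits reduces $2q^{r-l}\Omega_{G/1}$ to a sum of permutation syzygies, hence to zero in $D^\Omega(G)$. For the matching lower bound $d \geq 2q^{r-l}$, I would construct a detection invariant on $D_k(G)$ valued in $\Z/2q^{r-l}$ on which $\Omega_{G/1}$ evaluates to a generator; a natural candidate is a composite of the Brauer construction at $Q$ with the dimension function evaluated on the section tower $1 \trianglelefteq Q \trianglelefteq H'$, where $H' \leq G$ is the preimage of the faithful Sylow $q$-quotient of $G/Q$, in the spirit of the biset-functor description of $D^\Omega$ in \cite{BoYa06}.

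For the index of $C_{ba^+}(G,p)$, I would unpack the oriented Artin-Borel-Smith conditions against the basis $\{\omega_1, \omega_Q\}$. No section $T/S$ of $G$ is elementary abelian of rank $2$ (since $|G| = pq^r$ and $G/Q$ is cyclic), so the Borel-Smith conditions reduce to cyclic-section parity conditions, while the oriented Artin condition on the section $1 \trianglelefteq Q \trianglelefteq H'$ provides divisibility by $q^{r-l}$; together these yield a single divisibility condition on the coefficient of $\omega_1$ of modulus $2q^{r-l}$. Combined with the already-known inclusion $C_{ba^+}(G,p) \subseteq \ker\Psi$, the matching indices force equality. The main obstacle is the sharp lower bound $d \geq 2q^{r-l}$: the upper bound and the $C_{ba^+}$ index computation are essentially formal, but producing an invariant that detects the full $2q^{r-l}$-torsion of $\Omega_{G/1}$ is the nontrivial step.
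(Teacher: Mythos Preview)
The paper does not prove this lemma; it is quoted verbatim from \cite[Lemma 9.11]{GeYa21} and used as a black box in the proof of Theorem~\ref{thm:bouchomker}. So there is nothing in the paper to compare your argument against, and I can only assess the proposal on its own terms.

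Your reduction is correct: since $s_p(G)=\{1,Q\}$, the map $\Psi$ is determined by $\Psi(\omega_Q)=0$ and $\Psi(\omega_1)=\Omega_{G/1}$, so $\ker\Psi = d\Z\,\omega_1 \oplus \Z\,\omega_Q$ with $d$ the order of $\Omega_{G/1}$ in $D_k(G)$. Your index computation $[C(G,p):C_{ba^+}(G,p)]=2q^{r-l}$ is also right: in the basis $\{\omega_1,\omega_Q\}$ one has $f(1)-f(Q)$ equal to the $\omega_1$-coefficient, the only Borel--Smith constraint is the parity condition from the section $(Q,1)$, and the sharpest oriented Artin constraint comes from $L=1$, $K=Q$, $H=G$, giving exactly the congruence modulo $2q^{r-l}$. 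Note also that the upper bound $d\mid 2q^{r-l}$ is already immediate from the known inclusion $C_{ba^+}(G,p)\subseteq\ker\Psi$, so your syzygy manipulation is not needed there.

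The gap is the lower bound. Your proposed detection invariant, ``the Brauer construction at $Q$ composed with a dimension function on the section tower $1\trianglelefteq Q\trianglelefteq H'$,'' does not make sense as written: applying $(-)(Q)$ lands you in $k[G/Q]$-modules, and $G/Q$ is a $p'$-group, so there are no nontrivial $p$-subgroups left on which to evaluate a dimension function in the sense of \cite{BoYa06}. A workable route is to observe that $\Omega_{G/1}$ is the class of the Heller translate $\Omega k$, that $\Omega^n k$ is indecomposable, and hence $[\Omega_{G/1}]^n=0$ in $D_k(G)$ forces $\Omega^n k\cong k$. Since $G$ has cyclic Sylow $p$-subgroup $Q$ of order $p$ with inertial index $e=[N_G(Q):C_G(Q)]=[G:H]=q^{r-l}$, the trivial module has period $2e=2q^{r-l}$ in the stable category, which gives $d=2q^{r-l}$ and closes the argument.
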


    The next proposition gives us an equivalent formulation of $\ker (\Psi)$.

    \begin{prop}\label{prop:restshortexseqiso}
        We have an isomorphism of short exact sequences:

        \begin{figure}[H]
            \begin{tikzcd}
                0 \ar[r]& \calT\calE_k^\Omega(G) \ar[r] \ar[d, "h"] & {}^\Omega\calE_k^{V(\calF_G)}(G) \ar[r, "\calH^\Omega"] \ar[d, "h"] & D^\Omega(G) \ar[r] \ar[d, "="] & 0 \\
                0 \ar[r] & \ker (\Psi) \ar[r] & CF(G,p) \ar[r, "\Psi"] & D^\Omega(G)  \ar[r]& 0
            \end{tikzcd}
            \centering
        \end{figure}
    \end{prop}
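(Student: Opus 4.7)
The plan is to reduce the problem to two ingredients: commutativity of the right-hand square, and the fact that the middle vertical arrow is an isomorphism. The leftmost isomorphism then follows formally by restricting to kernels.

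First I would verify commutativity of the right square. By $\Z$-linearity it suffices to check on the generating set $\{[C_P]\}_{P\in[s_p(G)]}$ of ${}^\Omega\calE_k^{V(\calF_G)}(G)$. For each such generator I would compute
\[ \Psi(h([C_P])) = \Psi(\omega_P) = \Omega_P = \calH^\Omega([C_P]), \]
using Definition \ref{basisnotationdefinitions} for the identities $h([C_P]) = \omega_P$ and $\calH^\Omega([C_P]) = \Omega_P$, and Definition \ref{def:bouchom} for $\Psi(\omega_P) = \Omega_P$. The Sylow case $P \in \Syl_p(G)$ is analogous, with both sides equaling $[k]$.

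Next I would show that the middle vertical arrow $h: {}^\Omega\calE_k^{V(\calF_G)}(G) \to C(G,p)$ is a $\Z$-module isomorphism. By Definition \ref{basisnotationdefinitions}, the set $\{[C_Q]\}_{Q\in[s_p(G)]}$ is a $\Z$-basis of a free complement to the torsion subgroup $T_{V(\calF_G)}(G,S)$ in $\calE_k^{V(\calF_G)}(G)$, and this complement equals ${}^\Omega\calE_k^{V(\calF_G)}(G)$ by the very definition of the latter as the subgroup generated by the $[C_P]$. On the other hand, Theorem \ref{thm:basisofcalEVFG} gives that $\{\omega_Q\}_{Q\in[s_p(G)]}$ is a $\Z$-basis of $C(G,p)$. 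Since $h$ sends the first basis bijectively onto the second, it is a $\Z$-module isomorphism.

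With commutativity of the right square established and both the middle and right vertical arrows being isomorphisms, the five lemma (or simply restricting $h$ to kernels of the horizontal maps $\calH^\Omega$ and $\Psi$) forces $h: \calT\calE_k^\Omega(G) \to \ker\Psi$ to be an isomorphism as well, completing the isomorphism of short exact sequences. I do not foresee any substantial obstacle: every ingredient is already in place in the paper. The only point that might warrant a sentence of justification is the identification of ${}^\Omega\calE_k^{V(\calF_G)}(G)$ with the free complement of $T_{V(\calF_G)}(G,S)$ rather than merely a subgroup of it, but this is immediate from its definition as the span of the $\Z$-basis $\{[C_P]\}$.
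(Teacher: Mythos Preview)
Your proposal is correct and follows essentially the same approach as the paper: verify $\calH^\Omega = \Psi\circ h$ on the generators $[C_P]$, observe that $h$ carries the free basis $\{[C_P]\}$ of ${}^\Omega\calE_k^{V(\calF_G)}(G)$ to the basis $\{\omega_P\}$ of $C(G,p)$ and is therefore an isomorphism, and then restrict to kernels to obtain the leftmost isomorphism. The paper's proof is slightly terser but structurally identical.
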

    \begin{proof}
        First, we have $\calH^\Omega = \Psi \circ h$ by a similar argument as used in the proof of Proposition \ref{prop:bouchomcommutesforpgroups}. In particular, ${}^\Omega\calE_k^{V(\calF_G)}(G)$ is free abelian with basis given by elements of the form $[C_P]$ for $P \in [s_p(G)]$. The left-hand square commutes simply since the injections are both inclusions of kernels. The image of this basis of ${}^\Omega\calE_k^{V(\calF_G)}(G)$ under $h$ is precisely the basis of $CF(G,p)$ given by elements of the form $\omega_P$. Since $[C_P]\mapsto \omega_P$, $h$ is an isomorphism. Since the top and bottom injective homomorphisms are both subgroup inclusion, $h: \calT\calE_k^\Omega(G) \to \ker(\Psi)$ is also an isomorphism.
    \end{proof}

    In fact, we use the identification given in Proposition \ref{prop:restshortexseqiso} to completely determine $\ker(\Psi)$, giving a positive answer to Gelvin and Yal\c{c}in's question. To do this, we briefly recall a character-theoretic result of Boltje and Carman regarding the orthogonal unit group of the trivial source ring $O(T(kG))$, and the relation of $O(T(kG))$ to $\calE_k(G)$ studied in \cite{SKM23}. 

    \begin{definition}
        Recall the \textit{trivial source ring} $T(kG)$ is the split Grothendieck group of ${}_{kG}\triv$ with multiplication induced by $\otimes_k$, and identically is the Grothendieck ring of the tensor-triangulated category $K^b({}_{kG}\triv).$ The \textit{orthogonal unit group} $O(T(kG))$ is the subgroup of $T(kG)^\times$ consisting of units $u \in T(kG)$ for which $u\inv = u^*$.
    \end{definition}

    \begin{theorem}{\cite[Theorem C]{BC23}}\label{thm:boltjethmc}
        Let $S$ be a Sylow $p$-subgroup of $G$ and let $\calF := \calF_S(G)$ be the associated fusion system on $S$. One has a direct product decomposition \[O(T(k G))\cong B(\calF)^\times \times \left(\prod_{P\in s_p(G)} \Hom(N_G(P)/P,k^\times)\right)'\] where the second factor is defined as the set of all tuples \[(\varphi_P) \in \left(\prod_{P\in s_p(G)} \Hom(N_G(P)/P, k^\times)\right)^G\] satisfying \[\varphi_P(xP) = \varphi_{P\langle x_p\rangle}( xP\langle x_p\rangle)\] for all $P \in s_p(G)$ and $x\in N_G(P)$.
    \end{theorem}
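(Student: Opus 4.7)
The plan is to analyze $O(T(kG))$ through the species table of the trivial source ring. Recall that the species of $T(kG)$ are parametrized by $G$-conjugacy classes of pairs $(P,s)$ with $P \in s_p(G)$ and $sP$ a $p$-regular element of $N_G(P)/P$, with $\sigma_{P,s}$ sending a trivial source module $[M]$ to the Brauer character of the $k[N_G(P)/P]$-module $M(P)$ evaluated at $sP$. After extending scalars, these species embed $T(kG)$ diagonally into a product of copies of a splitting ring $\mathcal{O}$, and a unit $u \in T(kG)^\times$ is orthogonal precisely when $\sigma_{P,s}(u)\,\sigma_{P,s^{-1}}(u) = 1$ for every pair $(P,s)$, forcing each species value to be a root of unity.

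First I would split the tuple $(\sigma_{P,s}(u))$ according to whether $s = 1$. The $s = 1$ values reduce to signs $\pm 1$ coming from the ranks $\dim_k u(P)$, and by Puig's identification of the $p$-permutation Burnside subring of $T(kG)$ with the Burnside ring $B(\calF)$ of the fusion system, these signs constitute a unit of $B(\calF)^\times$. This gives a canonical retraction $O(T(kG)) \twoheadrightarrow B(\calF)^\times$, so it remains to identify the kernel with the claimed second factor. For each fixed $P$, I would repackage the remaining family $\{\sigma_{P,s}(u)\}_{s}$ as a single class function on the $p$-regular classes of $N_G(P)/P$. Since this function is the Brauer character of a virtual trivial source module whose inverse equals its $k$-linear dual, it must coincide with the character of a one-dimensional representation, yielding some $\varphi_P \in \Hom(N_G(P)/P, k^\times)$. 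The $G$-stability of the tuple $(\varphi_P)$ follows from $G$-conjugacy among the species, while the gluing relation $\varphi_P(xP) = \varphi_{P\langle x_p\rangle}(xP\langle x_p\rangle)$ comes from iterating the Brauer construction: evaluation of the Brauer character of $M(P)$ at an element whose $p$-part is $x_p$ factors through $\br_{\langle x_p\rangle}(M(P)) \cong M(P\langle x_p\rangle)$, and this re-expresses the value in terms of $\varphi_{P\langle x_p\rangle}$.

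The main obstacle is surjectivity. Given a pair $(\omega, (\varphi_P))$ satisfying fusion-stability, $G$-stability, and the gluing condition, I would assemble a candidate orthogonal unit $u \in T(kG) \otimes \Q$ using the primitive idempotents $e_{(P,s)}$ of the species decomposition, and then verify that the congruences among species values required for $u$ to lie in the integral $T(kG)$ are precisely those encoded by the compatibility condition. In other words, the gluing relation is exactly what remains to be imposed after fusion-stability has been absorbed into the $B(\calF)^\times$ factor, so the two factors are independent and together exhaust $O(T(kG))$, yielding the stated direct product decomposition.
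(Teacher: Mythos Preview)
The paper does not prove this statement at all: it is quoted verbatim as \cite[Theorem C]{BC23} and used as a black box in the proof of Theorem~\ref{thm:bouchomker}. There is therefore no ``paper's own proof'' to compare against.

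That said, your outline is broadly the right shape for how the Boltje--Carman argument goes---species of $T(kG)$ indexed by pairs $(P,s)$, separation of the $s=1$ part as a Burnside-type invariant, and the remaining values assembling into linear characters $\varphi_P$. One step deserves more care: you assert that because $u(P)\cdot u(P)^* = [k]$ and the species values are roots of unity, the resulting class function on $p'$-elements of $N_G(P)/P$ ``must coincide with the character of a one-dimensional representation.'' This is not automatic from orthogonality alone; a virtual module of virtual dimension $1$ whose Brauer character takes root-of-unity values is not \emph{a priori} a genuine one-dimensional module. The actual argument in \cite{BC23} uses more: one needs the integrality constraints coming from the fact that $u(P)$ lies in $T(k[N_G(P)/P])$ (not just its ghost ring), together with an induction on the order of $G$ or a careful analysis of the congruences between species. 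Your surjectivity paragraph gestures at these congruences but does not pin down why the gluing relation $\varphi_P(xP) = \varphi_{P\langle x_p\rangle}(xP\langle x_p\rangle)$ is \emph{exactly} the integrality obstruction and nothing more. If you want a self-contained proof, that is the place to invest effort.
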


    Here, $x_p$ denotes the $p$-part of $x$. That is, given any $x \in G$, $x = x_p x_{p'} = x_{p'}x_p$, where $x_p$ has order a power of $p$ and $x_{p'}$ has $p'$ order.

    Taking the Lefschetz invariant of an endotrivial complex induces a group homomorphism \[\Lambda: \calE_k(G) \to O(T(kG)), [C] \mapsto \Lambda(C).\]
    Moreover, given an endotrivial complex $C$, the tuple $(\varphi_P)$ in \cite[Theorem C]{BC23} arising in the identification of $\Lambda(C) \in O(T(kG))$ corresponds to the local homology of $C$ via \[\calH_C(P) = [k_{\varphi_P}] \in D^\Omega(N_G(P)/P).\] See \cite[Proposition 4.5]{SKM23} for details.

    We are now ready to prove Gelvin and Yal\c{c}in's conjecture with endotrivial complex machinery. In fact, we do so by reducing to the case they prove in {\cite[Lemma 9.11]{GeYa21}}, then apply some character theory.

    \begin{theorem}\label{thm:bouchomker}
        We have $\ker(\Psi) = CF_{ba^+}(G,p)$.
    \end{theorem}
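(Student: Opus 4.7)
The plan is to prove $\ker\Psi\subseteq C_{ba^+}(G,p)$; the reverse inclusion is already \cite[Theorem 9.10]{GeYa21}. I will realize any $f\in\ker\Psi$ as the h-mark function of a complex $C\in\calT\calE_k^\Omega(G)$ via Proposition \ref{prop:restshortexseqiso}, and then for each section testing the oriented Artin condition, reduce via restriction and Brauer construction to the setting handled in \cite[Lemma 9.11]{GeYa21}.

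Concretely, fix a section $L\triangleleft K\triangleleft H\leq N_G(L)$ of the Artin hypothesis, with distinct primes $p,q$ and parameters $r,l$. Set $G':=H/L$ and $Q:=K/L$, so that $G'$ satisfies the hypothesis of Lemma \ref{lem:gelvinyalcinlemma}, giving $D^\Omega(G')\cong \Z/(2q^{r-l})$ generated by $\Omega_1$. First I would form the endotrivial $k[G']$-complex
\[D:=(\Res^G_H C)(L)\in\calE_k(G').\]
By the h-mark formulas of \cite[Proposition 4.3]{SKM23}, $h_D(1)=f(L)$ and $h_D(Q)=f(K)$. Expanding $h_D$ in the basis $\{\omega_1,\omega_Q\}$ of $C(G',p)$ and applying Definition \ref{def:bouchom} gives
\[\Psi_{G'}(h_D)=(h_D(1)-h_D(Q))\Omega_1\in D^\Omega(G'),\]
so the Artin congruence $f(L)\equiv f(K)\pmod{2q^{r-l}}$ is precisely the statement $h_D\in\ker\Psi_{G'}$.

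Next I would use the character theory of Theorem \ref{thm:boltjethmc}. Writing $\Lambda(C)\leftrightarrow(b,(\varphi_P^C)_{P\in s_p(G)})$, the hypothesis $C\in\calT\calE_k^\Omega(G)$ gives $\varphi_1^C=\triv$, whence Boltje--Carman compatibility at $P=1$ forces $\varphi_{\langle x_p\rangle}^C(x\langle x_p\rangle)=1$ for every $x\in G$. Taking $x=z$ a generator of the cyclic $p$-group $K$ (so $z_p=z$ and $L\langle z\rangle=K$), compatibility at $P=L$ yields
\[\varphi_L^C(zL)=\varphi_K^C(zK)=1,\]
so $\varphi_L^C$ is trivial on $Q=\langle zL\rangle$. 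Since the Lefschetz invariant commutes with restriction and Brauer construction (\cite[Proposition 4.5]{SKM23}), the character $\varphi_1^D=\varphi_L^C|_{G'}$ is trivial on $Q$; equivalently, $\calH_D(1)$ factors through $G'/Q\cong\Z/q^r$.

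The hard part will be the final step: deducing the integer congruence $h_D\in\ker\Psi_{G'}$ from the character-level triviality $\varphi_1^D|_Q=\triv$. My plan is to combine Boltje--Carman compatibility for $D$ itself (which, together with the only nontrivial $p$-cyclic subgroup of $G'$ being $Q$, pins down $\varphi_1^D$ in terms of $\varphi_Q^D$ on the stratum where $x_p\neq 1$) with the identification $\ker\Psi_{G'}=C_{ba^+}(G',p)$ from Lemma \ref{lem:gelvinyalcinlemma}. The Borel--Smith condition (automatic since $D$ is endotrivial) supplies divisibility by $2$; the character constraint $\varphi_1^D|_Q=\triv$ together with the action of $G'/Q$ on $Q$ with kernel of order $q^l$ should supply the remaining factor $q^{r-l}$, closing the Artin congruence.
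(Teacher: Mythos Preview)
Your overall architecture matches the paper's exactly: reduce to $G'=H/L$ via restriction and the Brauer construction, identify the Artin congruence with $h_D\in\ker\Psi_{G'}$, and invoke Lemma~\ref{lem:gelvinyalcinlemma} together with Boltje--Carman compatibility. The gap is in the execution of the character step, and your proposed ``hard part'' plan does not close it.

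The information you extract, namely $\varphi_L^C|_Q=\triv$, is vacuous: $Q=K/L$ is a $p$-group and $\varphi_L^C$ is a homomorphism to $k^\times$ in characteristic $p$, so it kills $p$-elements automatically. Thus your application of compatibility at $P=L$ with the $p$-element $z$ yields nothing new. What actually must be shown is that $\varphi_L^C$ restricted to $H/L$ is trivial on \emph{all} of $H/L$, in particular on the $p'$-quotient $H/K\cong\Z/q^r$; only then does $\calH_D(1)=[k]$, which (via Proposition~\ref{prop:restshortexseqiso} applied to $G'$) gives $h_D\in\ker\Psi_{G'}$. Your final plan --- using compatibility for $D$ to relate $\varphi_1^D$ on elements with $x_p\neq 1$ to $\varphi_Q^D$ --- does not constrain $\varphi_1^D$ on $p'$-elements of $G'$, which is precisely where the content lies, and you have no control over $\varphi_Q^D$ in any case.

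The missing idea, which the paper supplies, is to use compatibility at $P=1$ (which you already recorded: $\varphi_{\langle x_p\rangle}^C(x\langle x_p\rangle)=1$ for every $x\in G$) but with a carefully chosen element. For an arbitrary $x\in H$, one multiplies by a suitable element $k\in K$ to produce $y=xk$ whose $p$-part generates $L$; since $k$ is a $p$-element, $\varphi_L(yL)=\varphi_L(xL)$, while compatibility at $P=1$ with this particular $y$ reads $1=\varphi_1^C(y)=\varphi_{\langle y_p\rangle}^C(y\langle y_p\rangle)=\varphi_L^C(yL)$. This forces $\varphi_L^C|_{H/L}$ to be the trivial character, hence $\calH_D(1)=[k]$, and the congruence follows from Lemma~\ref{lem:gelvinyalcinlemma}.
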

    \begin{proof}
        It suffices to show $CF_{ba^+}(G,p) \supseteq \ker (\Psi)$. Suppose $[C] \in \calT\calE_k^\Omega(G)$ and let $C$ be the unique indecomposable representative of $[C]$. By Proposition \ref{prop:restshortexseqiso}, it suffices to show that $h_C$ satisfies the oriented Artin-Borel-Smith conditions. For any collection of subgroups $L\triangleleft K \triangleleft H \leq N_G(L)$ as specified in the oriented Artin conditions, we will show $h_C(L) \equiv h_C(K) \mod 2q^{r-l}$. Note that $r$ and $l$ are local conditions, only depending on $K/L$ and $H/K$.

        Let $D = \Res^{N_G(L)/L}_{H/L}C(L)$. Then \[h_D(K/L) = h_C(K) \text{ and } h_D(L/L) = h_C(L)\] by \cite[Proposition 4.3]{SKM23}. Moreover, the values of $r$ and $l$ do not change. Therefore, it suffices to show $h_D$ satisfies $h_D(L/L) \equiv h_D(K/L) \mod 2q^{r-l}$. Notice that $H/L$ satisfies the conditions of \cite[Lemma 9.11]{GeYa21}, with $K/L$ as the subgroup $Q$ in the lemma. Therefore, by \cite[Lemma 9.11]{GeYa21}, if $h_D \in \ker (\Psi)$, the congruence holds, and we are done. This occurs if and only if $\calH_D(1) = [k] \in D^\Omega(H/K)$ by Proposition \ref{prop:restshortexseqiso}. Note that \[\Res^{N_G(L)/L}_{H/L} \calH_{C}(L) = \calH_D(1) \in  D^\Omega(H/K).\]
        On the other hand, \[\Res^{N_G(L)/L}_{H/L} \calH_{C}(L) = [k_{\varphi_L|_{H/L}}],\] where $\varphi_L$ is the character corresponding to $u$ from the decomposition in \cite[Theorem C]{BC23} Thus, it suffices to show that for any $x \in H$, $\varphi_L(xL) = 1.$

        Since $h_C \in \ker (\Psi)$, $\varphi_1$ is the trivial character. By \cite[Theorem C]{BC23}, we have that for any $x \in H$, \[1 = \varphi_1(x) = \varphi_{\langle x_p\rangle}(x \langle x_p\rangle)=\varphi_{\langle x_p\rangle}(x_{p'} \langle x_p\rangle).\]

        We claim that for every coset $xL$ of $H/L$, there exists a $y \in H$ for which $\varphi_L(yL) = \varphi_L(xL)$ and $y_p$ is a generator of $L$, where $y = y_py_{p'}$ is the unique decomposition of $y$ into $p$- and $p'$-parts. Indeed, if $\langle x_p\rangle = L$, $y = x$ suffices. If not, then since $K$ is a $p$-group, $x_p \in K$, and there exists some $k\in K$ for which $x_pk$ is a generator of $L \leq K$. Let $y = xk$. Then by construction, $\langle y_p\rangle =\langle x_pk\rangle = L$. Now, since $\varphi_L$ is a linear Brauer character and $k$ is a $p$-element of $H$, \[\varphi_L(yL) = \varphi_L(xkL) = \varphi_L(x_{p'}L) = \varphi_L(xL),\] as desired. Therefore, for every coset $xL\in H/L$, we have \[1 = \varphi_1(y) = \varphi_{L}(yL) = \varphi_L(xL),\] thus $\varphi_{L}$ is the trivial character, and we are done.

    \end{proof}

    \begin{corollary}
        Let $f \in CF_{b}(G,p)$. For $P \in [s_p(G)]$, set \[b_P:= \sum_{Q \in [s_p(G)]} f(Q)\mu(P,Q).\]

        The $V(\calF_G)$-endosplit-trivial chain complex
        \[C = \bigotimes_{Q \in [s_p(G)]} C_Q^{\otimes b_Q}\] has an endotrivial cap $C_0$ with  $\calH(C_0) = k$ if and only if $f \in CF_{ba^+}(G,p)$.
    \end{corollary}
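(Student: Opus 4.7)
The plan is to reduce the claim to the identification $\ker\Psi = C_{ba^+}(G,p)$ established in Theorem \ref{thm:bouchomker}, using the short-exact-sequence isomorphism of Proposition \ref{prop:restshortexseqiso} together with the M\"obius-inversion construction from Theorem \ref{thm:hmarktocomplex}. In effect the corollary is a repackaging of Theorem \ref{thm:bouchomker}, so I do not expect any substantively new obstacle; the work is already done.

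First I would verify that $h_C = f$. By construction $[C] = \sum_{Q} b_Q [C_Q]$ in $\calE_k^{V(\calF_G)}(G)$, and since $h$ is a homomorphism with $h([C_Q]) = \omega_Q$, we obtain $h_C = \sum_Q b_Q\, \omega_Q$. The defining formula $b_P = \sum_Q f(Q)\mu(P,Q)$ is exactly the M\"obius inversion that recovers $f = \sum_P b_P\, \omega_P$, as in the proof of Theorem \ref{thm:basisofcalEVFG}. In particular $[C] \in {}^\Omega\calE_k^{V(\calF_G)}(G)$ by Definition \ref{def:subgroupgenbyrelsyzygies}, and $h([C]) = f$.

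Next I would reinterpret the condition ``$C$ has an endotrivial cap $C_0$ with $\calH(C_0) = [k]$'' as the single statement $[C] \in \calT\calE_k^\Omega(G)$. Existence of an endotrivial cap is equivalent to $[C]$ lying in the image of the embedding $\calE_k(G) \hookrightarrow \calE_k^{V(\calF_G)}(G)$, since the cap of a $V(\calF_G)$-endosplit-trivial complex is unique up to isomorphism and any representative of $[C]$ in $\calE_k(G)$ shares that cap. The homology condition $\calH(C_0) = [k]$ then forces $[C] \in \ker\calH = \calT\calE_k(G)$. Combined with $[C] \in {}^\Omega\calE_k^{V(\calF_G)}(G)$, this is exactly $[C] \in \calT\calE_k^\Omega(G)$.

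Finally I would invoke Proposition \ref{prop:restshortexseqiso}, whose left-hand vertical arrow is the isomorphism $h\colon \calT\calE_k^\Omega(G) \xrightarrow{\sim} \ker\Psi$. Therefore $[C] \in \calT\calE_k^\Omega(G)$ if and only if $f = h_C$ lies in $\ker\Psi$, and by Theorem \ref{thm:bouchomker} the latter equals $C_{ba^+}(G,p)$. The only care required is the simultaneous bookkeeping that places $[C]$ inside both $\calE_k(G)$ and ${}^\Omega\calE_k^{V(\calF_G)}(G)$; all substantive content, in particular the oriented Artin congruence, is absorbed into Theorem \ref{thm:bouchomker}.
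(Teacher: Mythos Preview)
Your proposal is correct and follows essentially the same route as the paper's own proof: establish $h_C = f$ via the M\"obius inversion from Theorem \ref{thm:hmarktocomplex}, then use the isomorphism $h\colon \calT\calE_k^\Omega(G)\xrightarrow{\sim}\ker\Psi$ of Proposition \ref{prop:restshortexseqiso} together with Theorem \ref{thm:bouchomker}. The paper's argument is terser, but your added unpacking of why ``endotrivial cap with $\calH(C_0)=[k]$'' amounts to $[C]\in\calT\calE_k^\Omega(G)$ is accurate and harmless.
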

    \begin{proof}
        For the reverse direction, it follows from the proof of Theorem \ref{thm:hmarktocomplex} that $h_C = f$. From Proposition \ref{prop:restshortexseqiso}, it follows that $C \in \calT\calE_k^\Omega(G)$, and therefore its cap $C_0$ has indecomposable homology isomorphic to $k$, as desired. For the forward direction, if $f \not\in CF_{ba^+}(G,p)$, then $[C] \not\in \ker (\calH)$ by the previous theorem and the isomorphism in Proposition \ref{prop:restshortexseqiso}, and the result follows.
    \end{proof}

    It remains unclear what occurs if $f \in CF_b(G,p)$ but $f\not\in CF_{ba^+}(G,p)$. One question of interest which remains is when $C$ as constructed in the previous corollary or as in Theorem \ref{thm:hmarktocomplex} has an endotrivial cap which does not belong to $\ker(\calH)$.

    \section{Extending virtual modules to invertible complexes for $p$-groups}

    In this section, we use results from \cite{BoYa06} to prove that the Lefschetz homomorphism,
    \[\Lambda:\calE_k(G) \to O(T(kG)), \, [C] \mapsto \Lambda(C) := \sum_{i\in \Z}(-1)^i [C_i],\] is surjective when $G$ is a $p$-group, giving a positive answer for a question posed by the author in \cite{SKM23}. As a corollary, we show that every $p$-permutation autoequivalence of a $p$-group (as defined by Boltje and Xu in \cite{BX07}) is the Lefschetz invariant of some splendid Rickard autoequivalence of the group algebra.

    \subsection{Surjectivity of the Lefschetz homomorphism}

    \begin{remark}
        If $G$ is a $p$-group, every permutation $kG$-module is indecomposable. In this case, we have a canonical ring isomorphism \[B(G) \xrightarrow{\sim} T(kG), [X] \mapsto [kX].\] Moreover, $(-)^*$ is the identity map on $T(kG)$, therefore $B(G)^\times \cong T(kG)^\times \cong O(T(kG))$.
    
        $B(G)^\times$ is famously difficult to describe for all finite groups. In fact, by an argument of tom Dieck in \cite{tD79}, the statement ``if $G$ has odd order, $B(G)^\times \cong C_2$'' is equivalent to the Feit-Thompson theorem.

        However, in the case for which $G$ is a $p$-group, Yal\c{c}in in \cite{Ya05} gave a complete generating set of $B(G)^\times$, which Bouc later refined to a basis in \cite{Bo07} by realizing $B^\times$ as a rational $p$-biset functor. Note the only interesting case is when $p = 2$. The biset operations on $B^\times$ are described by Bouc's \textit{generalized tensor induction}. For details, we refer the reader to \cite[Section 11.2]{Bou10}, but they will not be necessary for the scope of this paper.

        For any finite group $G$, $B(G)$ is be characterized similarly to how endotrivial complexes are via h-marks. For a $G$-set $X$, its \textit{mark at $H$} is the integer $|X^H|$. The $\Z$-linearization of this assignment yields the \textit{mark homomorphism}.

        \[m: B(G) \to B(G)^*, X\mapsto (f_X: G/H \mapsto |X|^H). \]

        The mark homomorphism is injective and full-rank, so $\Q \otimes_\Z m: \Q\otimes_\Z B(G) \to \Q\otimes_\Z B(G)^*$ is an isomorphism. However, it is rarely surjective.

        Finally, it is easy to see the image of $B(G)^\times$ consists of functions $f\in B(G)^*$ which take values only in $\pm 1$.
    \end{remark}

    There are two key insights for the proof of this statement. For these next statements, note $G$ is a $p$-group, so $B(G)^*\cong CF(G)$.

    \begin{prop}
        Let $G$ be a $p$-group. The following diagram commutes, where $\phi$ is the \textit{exponential map}, $\phi(f)(K) = (-1)^{f(K)}$.

        \begin{figure}[H]
            \centering
            \begin{tikzcd}
                \calE_k(G) \ar[r, "h"] \ar[dd, "\Lambda"] &  CF_b(G) \ar[rd, "\phi"]  \\
                & & (B(G)^*)^\times \\
                O(T(kG)) \ar[r, "\cong"] &  B(G)^\times \ar[ru, "m", hookrightarrow]
            \end{tikzcd}
            \label{fig:lefschetzcomm}
        \end{figure}
    \end{prop}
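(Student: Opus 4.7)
The plan is to evaluate both compositions in the diagram on an endotrivial complex $[C] \in \calE_k(G)$ at an arbitrary $p$-subgroup $K \leq G$, and verify that they agree. Going along the top edge, $\phi(h_C)(K) = (-1)^{h_C(K)}$ by definition. Going along the bottom edge, I need to compute the mark of $\Lambda(C) = \sum_i (-1)^i [C_i] \in B(G) \cong T(kG)$ at $K$. By $\Z$-linearity of the mark homomorphism,
\[ m(\Lambda(C))(K) \;=\; \sum_{i\in\Z} (-1)^i\, m([C_i])(K). \]
The key identification is that for a $p$-group $G$ and a $p$-permutation $kG$-module $M$ with permutation basis $X$, one has $m([M])(K) = |X^K| = \dim_k M(K)$; indeed, the Brauer quotient of a $p$-permutation module has $k$-basis the permutation-basis elements fixed by $K$ (orbits of larger size lie in the image of traces from proper subgroups).

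With this in hand, $m(\Lambda(C))(K) = \sum_i (-1)^i \dim_k C_i(K)$. Since the Brauer construction $(-)(K)$ is a functor on chain complexes of $p$-permutation modules, $C(K)$ is a bounded chain complex with $C(K)_i = C_i(K)$, so its Euler characteristic can be computed either at the chain level or the homology level:
\[ \sum_{i\in\Z}(-1)^i \dim_k C_i(K) \;=\; \chi(C(K)) \;=\; \sum_{i\in\Z}(-1)^i \dim_k H_i(C(K)). \]
Now invoke the defining property of endotriviality (the omnibus proposition of Section 2): $C(K)$ is itself an endotrivial complex of $kN_G(K)/K$-modules, with nonzero homology concentrated in the single degree $h_C(K)$, and that homology has $k$-dimension one. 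Therefore $\chi(C(K)) = (-1)^{h_C(K)}$, and we conclude
\[ m(\Lambda(C))(K) \;=\; (-1)^{h_C(K)} \;=\; \phi(h_C)(K), \]
which holds for every $K \in s_p(G)$ and so proves commutativity.

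The only conceptual content is the bridge between the combinatorial mark on $B(G)$ and the Brauer-construction dimension; once that is in place the argument is an Euler-characteristic calculation, and the endotriviality hypothesis collapses the alternating sum to a single sign. There is no serious obstacle, and no need to invoke the biset-functor structure or the Borel-Smith description — the identity $h_C \in C_b(G,p)$ is used only to ensure that $\phi(h_C)$ is a well-defined element of the target, which is automatic since $\phi$ is defined pointwise on all of $C(G,p)$.
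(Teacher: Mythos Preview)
Your proof is correct. The argument is the standard one: identify the mark at $K$ with $\dim_k(-)(K)$ via the Brauer construction for permutation modules over a $p$-group, pass from the alternating sum of chain dimensions to the Euler characteristic of $C(K)$, and then use that endotriviality forces $C(K)$ to have one-dimensional homology concentrated in degree $h_C(K)$.

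The paper itself does not give a proof here; it simply records that the statement is a reformulation of \cite[Proposition 4.6]{SKM23}. Your argument is presumably what that cited proposition does (or is equivalent to it), so you have supplied the content that the paper defers to the earlier reference. One small cosmetic point: you do not actually need to know that $h_C$ lands in $C_b(G,p)$ for the computation to go through, as you note at the end; the diagram as drawn uses $C_b(G,p)$ only because Theorem~\ref{bigtheorem} identifies the image of $h$ with the Borel--Smith functions.
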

    \begin{proof}
        This is a reformulation of \cite[Proposition 4.6]{SKM23}.
    \end{proof}

    \begin{prop}
        Let $G$ be a $p$-group. Then $\phi(CF_b(G)) = m(B(G)^\times)$.
    \end{prop}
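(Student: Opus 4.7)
The plan is to prove the two containments, the forward one essentially by diagram chase and the reverse one by separating the primes.

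For $\phi(C_b(G,p)) \subseteq m(B(G)^\times)$, I would use Theorem \ref{bigtheorem}, which identifies $C_b(G,p)$ precisely with the image of $h\colon \calE_k(G) \to C(G,p)$. Thus any $f \in C_b(G,p)$ can be written as $h([C])$ for some $[C] \in \calE_k(G)$. Commutativity of the diagram in the preceding proposition then yields
\[\phi(f) = \phi(h([C])) = m(\Lambda([C])),\]
which lies in $m(B(G)^\times)$ since $\Lambda([C]) \in O(T(kG)) \cong B(G)^\times$.

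For the reverse inclusion $m(B(G)^\times) \subseteq \phi(C_b(G,p))$, the case $p$ odd is immediate: any $p$-group of odd order satisfies $B(G)^\times = \{\pm 1\}$ by tom Dieck \cite{tD79} (via Feit--Thompson), so $m(B(G)^\times)$ consists only of the two constant functions $\pm 1_{[s_p(G)]}$. Both are $\phi$ applied to the constant integer functions $0$ and $1$, which trivially satisfy the Borel--Smith conditions. Moreover, the second Borel--Smith condition at cyclic sections of odd prime order forces every element of $C_b(G,p)$ to be constant modulo $2$, so in this case $\phi(C_b(G,p)) = \{\pm 1_{[s_p(G)]}\}$ exactly.

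The substantive case is $p=2$. Here the key external input is the description of $B^\times$ as a rational $2$-biset functor due to Yal\c{c}in \cite{Ya05} and refined by Bouc \cite{Bo07}: there exists a surjective ``signed exponential'' homomorphism $\exp\colon C_b(G,2) \twoheadrightarrow B(G)^\times$ characterized by the compatibility $m\circ\exp = \phi|_{C_b(G,2)}$. Granted this, applying $m$ and using surjectivity of $\exp$ gives
\[m(B(G)^\times) = m(\exp(C_b(G,2))) = \phi(C_b(G,2)),\]
completing the argument. The principal obstacle is isolating and invoking this Yal\c{c}in--Bouc surjectivity theorem correctly; the rest of the proof is formal, but without this ingredient one has no handle on the size of $B(G)^\times$, which even for small $2$-groups is highly nontrivial.
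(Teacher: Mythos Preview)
Your argument is correct, but you should be aware that the paper does not prove this proposition at all: it simply cites \cite[Proposition 5.1]{BoYa06}, which in turn rests on Tornehave's result \cite{To84}. Compared to that bare citation, your approach is more self-contained in two respects. First, you prove the inclusion $\phi(C_b(G,p)) \subseteq m(B(G)^\times)$ internally by combining Theorem~\ref{bigtheorem} with the commutative diagram of the preceding proposition, rather than importing it from \cite{BoYa06}. Second, you handle the odd-prime case of the reverse inclusion directly via the parity constraint in the Borel--Smith conditions and the triviality of $B(G)^\times$ for odd-order groups.

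For $p=2$, however, your argument collapses back onto the same external input as the paper. The surjective exponential map $C_b(G,2) \twoheadrightarrow B(G)^\times$ satisfying $m\circ\exp = \phi$ is precisely Tornehave's theorem \cite{To84}, reformulated as \cite[Proposition 5.1]{BoYa06}; the papers \cite{Ya05} and \cite{Bo07} you cite give generating sets and the biset-functor structure of $B^\times$, but the surjectivity statement you need is more directly attributed to Tornehave. One further caveat: your forward-direction argument relies on Theorem~\ref{bigtheorem}, which itself depends on \cite[Theorem 1.2]{BoYa06}, so while your proof redistributes the external dependence on Bouc--Yal\c{c}in, it does not eliminate it.
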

    \begin{proof}
        This is a reformulation of \cite[Proposition 5.1]{BoYa06}, which uses the result of Tornehave \cite{To84}.
    \end{proof}

    As a result, we obtain a surjective homomorphism $\gamma: CF_b(G) \to B(G)^\times$ for which the following diagram commutes.

    \begin{figure}[H]
        \centering
        \begin{tikzcd}
            \calE_k(G) \ar[r, "h"] \ar[dd, "\Lambda"] &  CF_b(G) \ar[dd, "\gamma", dotted] \ar[rd, "\phi"]  \\
            & & (B(G)^*)^\times \\
            O(T(kG)) \ar[r, "\cong"] &  B(G)^\times \ar[ru, "m", hookrightarrow]
        \end{tikzcd}
        \caption{The induced surjective homomorphism $\gamma: CF_b(G) \to B(G)^\times$.}
        \label{fig:lefschetzcomm2}
    \end{figure}

    \begin{theorem}\label{thm:surjetivityoflefschetz}
        Let $G$ be a $p$-group. $\Lambda: \calE_k(G) \to O(T(kG))$ is surjective.
    \end{theorem}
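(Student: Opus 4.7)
The plan is to assemble the surjectivity of $\Lambda$ directly from the commutative diagram shown just before the statement, together with the two propositions immediately preceding it. Nothing new needs to be proved; the claim is an elementary diagram chase once the right pieces are in place.

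First I would observe that, under the ring isomorphism $O(T(kG))\cong B(G)^\times$ (valid since $G$ is a $p$-group, so every $p$-permutation module is a permutation module and $(-)^*$ acts trivially), the mark homomorphism $m$ provides an injection $B(G)^\times \hookrightarrow (B(G)^*)^\times \cong C(G,p)^\times$. The diagram \ref{fig:lefschetzcomm2} tells us that $m \circ \Lambda = \phi \circ h$, where $h \colon \calE_k(G) \to C_b(G,p)$ is the h-mark homomorphism and $\phi$ is the exponential map.

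Next I would invoke the two key inputs. By Theorem \ref{bigtheorem}, $h \colon \calE_k(G) \to C_b(G,p)$ is an isomorphism. By the second of the two propositions preceding the theorem (itself a reformulation of \cite[Proposition 5.1]{BoYa06}), the image $\phi(C_b(G,p))$ coincides with $m(B(G)^\times)$. Combining these, the image of $m \circ \Lambda = \phi \circ h$ equals $m(B(G)^\times)$. Since $m$ is injective, this forces $\Lambda(\calE_k(G)) = B(G)^\times$, i.e., $\Lambda$ is surjective onto $O(T(kG))$.

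Equivalently, one can phrase the argument by noting that there is a well-defined surjection $\gamma \colon C_b(G,p) \twoheadrightarrow B(G)^\times$ making the triangle $m \circ \gamma = \phi$ commute (this is the dotted arrow in diagram \ref{fig:lefschetzcomm2}); then $\Lambda = \gamma \circ h$ is a composition of an isomorphism and a surjection. There is no real obstacle here, since the hard content has already been packaged into Theorem \ref{bigtheorem} (identifying $\calE_k(G)$ with Borel-Smith functions) and into Tornehave/Bouc--Yal\c{c}in's result that the exponential map sends Borel-Smith functions onto the image of the Burnside unit group under marks; the only thing to write up is the diagram chase.
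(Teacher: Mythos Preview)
Your proposal is correct and follows essentially the same approach as the paper: both use the commutative diagram \ref{fig:lefschetzcomm2}, the fact that $h$ is an isomorphism, and the surjectivity of $\gamma$ (equivalently, $\phi(C_b(G,p)) = m(B(G)^\times)$) to conclude that $\Lambda$ is surjective. Your write-up is simply a more detailed unpacking of the one-line diagram chase the paper gives.
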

    \begin{proof}
        This follows from Figure \ref{fig:lefschetzcomm2}. Since $h$ is an isomorphism and $O(T(kG)) \cong B(G)^\times$, surjectivity of $\gamma: CF_b(G) \to B(G)^\times$ implies $\Lambda$ is surjective as well.
    \end{proof}

    \subsection{Every $p$-permutation autoequivalence of a $p$-group arises from a splendid Rickard equivalence}

    Next, we use Theorem \ref{thm:surjetivityoflefschetz} to prove that for any every $p$-permutation autoequivalence of a $p$-group $G$, $\gamma \in O(T^\Delta(kG,kG)$, there exists a splendid Rickard equivalence $\Gamma \in Ch^b({}_{kG}\triv_{kG})$ for which $\Lambda(\Gamma) = \gamma$.

    We refer the reader to \cite{BP20} or \cite{BrMi23} for exposition on $p$-permutation equivalences and splendid Rickard equivalence. We take a looser definition of splendid Rickard equivalence which allows for equivalences of direct sums of block algebras (such as group algebras). 

    \begin{definition}
        For any $\phi\in \Aut(G)$, define $\Delta_\phi(G)\leq G\times G$ to be the subgroup defined by \[\Delta_\phi(G) := \{(\phi(g), g) \in G\times G\mid g \in G \}.\] Define $\Delta_\phi(G)^{op}\leq G\times G$ to be the subgroup defined by \[\Delta_\phi(G)^{op} := \{(g, \phi(g))\in G\times G \mid g\in G\}.\] Observe $\Delta_\phi(G)^{op} \cong \Delta_{\phi\inv}(G)$.
        We have obvious group homomorphisms \[G\cong \Delta_\phi(G), \,g \mapsto (\phi(g),g),\] and \[G \cong \Delta_\phi(G)^{op} , \,g\mapsto (g, \phi(g)).\] In this way we identify $kG$-modules with $k\Delta_\phi(G)$-modules and $k\Delta_\phi(G)^{op}$-modules.
    \end{definition}

    \begin{prop}\label{prop:ppermsareorthogunits}
        Let $G$ be a $p$-group and let $\gamma \in O(T^\Delta(kG, kG))$ be a $p$-permutation equivalence. Then there exists a orthogonal unit $u \in O(T(k[\Delta_\phi G]))\cong O(T(kG))$ and group automorphism $\phi: G\xrightarrow{\sim} G$ such that $\gamma = \Ind^{G\times G}_{\Delta_{\phi}G}(u)$.
    \end{prop}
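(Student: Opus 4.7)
The plan is to combine the classification of indecomposable trivial source $k[G\times G]$-modules (available because $G\times G$ is itself a $p$-group) with the local structure theorem for $p$-permutation equivalences of Boltje and Perepelitsky. For the first ingredient, note that any indecomposable trivial source $k[G\times G]$-module with vertex $P \leq G\times G$ is isomorphic to $\Ind_P^{G\times G}(k) = k[(G\times G)/P]$: since $G\times G$ is a $p$-group, every trivial source module is permutation, and Green's indecomposability theorem forces $k[(G\times G)/P]$ to be indecomposable. Because $\gamma$ has twisted diagonal vertices by hypothesis, one writes $\gamma = \sum_{(Q,\psi)} a_{(Q,\psi)}[\Ind_{\Delta_\psi(Q)}^{G\times G}(k)]$ with the sum indexed by $(G\times G)$-orbits of twisted diagonals $\Delta_\psi(Q)$.

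The main step is to invoke the Boltje–Perepelitsky structure theorem for $p$-permutation equivalences \cite{BP20}: associated to $\gamma$ there is a unique maximal $\gamma$-Brauer pair, which, because $G$ is its own defect group, must be of the form $(\Delta_\phi(G), k)$ for some $\phi \in \Aut(G)$ (uniquely determined up to inner automorphism and $(G\times G)$-conjugation), with coefficient $a_{(G,\phi)} = \pm 1$; moreover, every other twisted diagonal appearing in $\gamma$ is $(G\times G)$-subconjugate to $\Delta_\phi(G)$. Using transitivity of induction, each summand $[\Ind_{\Delta_\psi(Q)}^{G\times G}(k)]$ with $\Delta_\psi(Q) \leq_{G\times G} \Delta_\phi(G)$ rewrites as $\Ind_{\Delta_\phi(G)}^{G\times G}$ applied to $[\Ind_{\Delta_{\psi'}(Q')}^{\Delta_\phi(G)}(k)]$ for an appropriate conjugate $\Delta_{\psi'}(Q') \leq \Delta_\phi(G)$, and collecting coefficients produces an element $u \in T(k[\Delta_\phi(G)])$ with $\gamma = \Ind_{\Delta_\phi(G)}^{G\times G}(u)$.

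To finish, I would verify $u \in O(T(k[\Delta_\phi(G)]))$ by substituting $\gamma = \Ind_{\Delta_\phi(G)}^{G\times G}(u)$ into the orthogonality relation $\gamma \cdot \gamma^* = [kG]$. Using compatibility of induction with $k$-duality together with the Mackey formula, one obtains $\Ind_{\Delta_\phi(G)}^{G\times G}(u\cdot u^*) = [kG]$ modulo contributions of strictly smaller vertex, and comparison of leading coefficients forces $u \cdot u^* = [k[\Delta_\phi(G)]]$. Under the group isomorphism $\Delta_\phi(G) \cong G$, $(\phi(g),g)\mapsto g$, this $u$ corresponds to the required orthogonal unit in $O(T(kG))$. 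The chief obstacle is importing the Boltje–Perepelitsky rigidity in its strongest form: namely, that the maximal vertex is genuinely of the form $\Delta_\phi(G)$ for a full automorphism $\phi \in \Aut(G)$ (rather than a twisted diagonal of a proper $p$-subgroup), that the multiplicity there equals $\pm 1$, and that all remaining vertices are subconjugate; once this is in hand, the remainder of the argument is a routine Mackey computation.
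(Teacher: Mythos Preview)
Your overall strategy coincides with the paper's: invoke the Boltje--Perepelitsky maximal $\gamma$-Brauer pair to obtain $\phi\in\Aut(G)$ with every vertex appearing in $\gamma$ subconjugate to $\Delta_\phi(G)$, then use transitivity of induction to write $\gamma=\Ind^{G\times G}_{\Delta_\phi G}u$ for some $u\in T(k[\Delta_\phi G])$. (The extra observation that the top coefficient is $\pm 1$ is true but not needed.) Where your proposal diverges, and becomes imprecise, is the verification that $u$ is an orthogonal unit. Your sketch (``Mackey formula \dots\ modulo contributions of strictly smaller vertex \dots\ comparison of leading coefficients'') is not how the paper proceeds, and as written contains slips: the asserted relation $u\cdot u^*=[k[\Delta_\phi(G)]]$ has the wrong right-hand side (the identity of $T(k\Delta_\phi G)$ under $\otimes_k$ is $[k]$, not the regular module), and the induction on the left of your displayed equality should land over the untwisted diagonal $\Delta G$, not over $\Delta_\phi G$.

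The paper's argument at this step is exact and avoids any ``modulo lower terms'' reasoning. One first identifies the bimodule dual as $(\Ind^{G\times G}_{\Delta_\phi G}u)^*=\Ind^{G\times G}_{\Delta_\phi(G)^{op}}u$, and then applies Bouc's extended tensor product formula to obtain the \emph{equality}
\[
\Ind^{G\times G}_{\Delta_\phi G}u\;\cdot_{kG}\;\Ind^{G\times G}_{\Delta_\phi(G)^{op}}u \;=\; \Ind^{G\times G}_{\Delta G}(u\cdot_k u)
\]
in $T^\Delta(kG,kG)$, with no error terms. Since $\Ind^{G\times G}_{\Delta G}:T(kG)\to T^\Delta(kG,kG)$ is split injective (a retraction is given by taking $1\times G$-fixed points), equating with $[kG]$ yields $u\cdot_k u=[k]$ on the nose; self-duality of $u$ (automatic for a $p$-group, where every trivial source module is permutation hence self-dual) then gives $u\in O(T(kG))$. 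Your leading-coefficient idea could plausibly be salvaged, but you would need to replace the vague Mackey step by this exact computation, or else argue carefully that no lower-vertex cross terms survive.
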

    \begin{proof}
        This follows from \cite[Theorem 1.1(e)]{BP15}, however we provide a mostly self-contained proof as well.

        $\calB\calP(\gamma)$ has a maximal $\gamma$-Brauer pair $\omega = (\Delta(G,\phi, G), e\otimes e^*)$ by \cite[Theorem 10.11]{BP20}, where $\phi : G\to G$ is a group automorphism and $e$ is the unique block of $Z(G)$, and all other maximal elements of $\calB\calP(\gamma)$ are $G\times G$-conjugate to $\omega$, every other element of $\calB\calP(\gamma)$ is $G\times G$-conjugate to some $\omega'$ satisfying $\omega' \leq \omega$. Therefore, every trivial source $(kG, kG)$-bimodule $M$ appearing in $\gamma$ has a maximal $M$-Brauer pair $\omega' \in \calB\calP(M)$ satisfying $\omega' \leq \omega$. In particular, there exists a subgroup $P \leq G$ for which $M$ has $\Delta_\phi (P)$ as a vertex. Therefore, we have
        \[\gamma = \left(\sum_{i=1}^a [\Ind^{G\times G}_{\Delta_\phi P_i} k] \right)- \left(\sum_{i=1}^b [\Ind^{G\times G}_{\Delta_\phi Q_i}k ]\right),\]
        for some $a,b \in \N$ and each $P_i, Q_j$ a subgroup of $G$. Since for any $P \leq G$,
        \[\Ind^{G\times G}_{\Delta_\phi P} = \Ind^{G\times G}_{\Delta_\phi G} \circ\Ind^{\Delta_\phi G}_{\Delta_\phi P},\]
        the above equality factors as \[\gamma = \Ind^{G\times G}_{\Delta_\phi G}\left(\sum_{i=1}^a [\Ind^{\Delta_\phi G}_{\Delta_\phi P_i} k] - \sum_{i=1}^b [\Ind^{\Delta_\phi G}_{\Delta_\phi Q_i} k]\right) =: \Ind^{G\times G}_{\Delta_\phi G} u,\] with $u \in T(k[\Delta_\phi G])$. It suffices to show $u \in O(T(k[\Delta_\phi G]))$. From here, we identify $T(kG)$ with $T(k[\Delta_\phi (G)])$ in the obvious way.

        We have that \[\left(\Ind^{G\times G}_{\Delta_\phi (G)} u \right)\cdot_{kG} \left(\Ind^{G\times G}_{\Delta_\phi G} u \right)^* = kG \in O(T^\Delta(kG,kG)).\] Since $u \in T(kG)$ and $G$ is a $p$-group, $u$ is a sum of transitive indecomposable virtual permutation modules, hence $\Ind^{G\times G}_{\Delta_\phi G} u $ is a sum of transitive indecomposable virtual permutation modules with twisted diagonal vertices. It is a straightforward verification that for any $P \leq G$ and $\varphi \in \Aut(G)$, \[\left(\Ind^{G\times G}_{\Delta_\varphi (P)}k \right)^*\cong \Ind^{G\times G}_{\Delta_{\varphi}(P)^{op}} k,\] as $(kG,kG)$-bimodules. Therefore, \[\left(\Ind^{G\times G}_{\Delta_\phi (G)} u\right)^*= \Ind^{G\times G}_{\Delta_{\phi}(G)^{op} } u \in T^\Delta(kG,kG).\]
        It follows from Bouc's extended tensor product formula \cite[Theorem 1.1]{Bo10b} that for any $H \leq G$ and $kH$-modules $M,N$ that \[\Ind^{G\times G}_{\Delta_\phi (H)} M \otimes_{kG} \Ind^{G\times G}_{\Delta_{\phi}(H)^{op} }N \cong \Ind^{G\times G}_{\Delta H}(M\otimes_k N),\]
        where on the left, $M$ is regarded as a $k\Delta_\phi H$-module and $N$ is regarded as a $k\Delta_{\phi}(H)^{op}$-module. Moreover, this isomorphism is natural in both arguments. Therefore, we have the following chain of equalities.
        \begin{align*}
            kG&=\left(\Ind^{G\times G}_{\Delta_\phi (G)} u \right)\cdot_{kG} \left(\Ind^{G\times G}_{\Delta_\phi (G)} u \right)^*\\
            &=\Ind^{G\times G}_{\Delta_\phi (G)} u\cdot_{kG}\Ind^{G\times G}_{\Delta_{\phi} (G)^{op}}u \\
            &\cong \Ind^{G\times G}_{\Delta G} (u\cdot_k u) \in O(T^\Delta(kG,kG))
        \end{align*}
        $\Ind^{G\times G}_{\Delta G}$ induces a split injective group homomorphism $T(kG) \to T^\Delta(kG,kG)$ with retraction induced by taking $1\times G$-fixed points, see \cite[Lemma 4.14]{SKM23}. Therefore, we have \[k = (kG)^{1\times G} = \Ind^{G\times G}_{\Delta G} (u\cdot_k u)^{1\times G} = u\cdot_k u \in T(kG).\] Since $u$ contains only permutation modules, as $G$ is a $p$-group, $u$ is self-dual, and we conclude $u \in O(T(kG))$ as desired.
    \end{proof}

    Therefore, every $p$-permutation autoequivalence of a $p$-group is induced twisted diagonally from an orthogonal unit. Given a $p$-permutation equivalence $\gamma = \Ind^{G\times G}_{\Delta_\phi G}u$ with $u \in O(T(kG))$, the obvious choice of corresponding splendid Rickard equivalence $\Gamma$ which satisfies $\Lambda(\Gamma) = \gamma$ should be $\Gamma = \Ind^{G\times G}_{\Delta_\phi G}C$, where $C$ is some endotrivial complex for which $\Lambda (C) = u$. Indeed, \[\Lambda \left(\Ind^{G\times G}_{\Delta_\phi G}C \right)= \Ind^{G\times G}_{\Delta_\phi G}u = \gamma \in O(T^\Delta(kG,kG)).\]

    \begin{lemma}\label{lemma:twodiffduals}
        Let $M$ be a $kG$-module. We have a natural isomorphism \[(\Ind^{G\times G}_{\Delta_{\phi}(G)} M)^* \cong (\Ind^{G\times G}_{\Delta_{\phi}(G)^{op}}M)^*,\] as $(kG,kG)$-bimodules, where the dual induced on the left-hand side arises from the left $kG$-module $k$-dual, and the dual on the right-hand side arises from the bimodule $k$-dual.
    \end{lemma}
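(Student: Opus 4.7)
The plan is to exhibit the natural isomorphism by combining two standard ingredients: the compatibility of $k$-linear duality with induction from a finite-index subgroup, and the swap automorphism $\sigma: G\times G \to G\times G$, $(g,h)\mapsto(h,g)$, which carries $\Delta_\phi(G)$ to $\Delta_\phi(G)^{op}$ under the identifications $G \cong \Delta_\phi(G)$ and $G\cong \Delta_\phi(G)^{op}$. The entire argument is essentially formal once the bimodule conventions are pinned down.

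First I would recall that for any finite-index subgroup $H \leq G\times G$ and any $kH$-module $N$, there is a canonical $k[G\times G]$-module isomorphism $(\Ind^{G\times G}_H N)^* \cong \Ind^{G\times G}_H(N^*)$, constructed via dual bases along a fixed system of coset representatives of $H$ in $G\times G$. This intertwines the contragredient action on the left-hand $k$-dual with the induced contragredient action on the right. Applied with $H = \Delta_\phi(G)$, this reduces the lemma to identifying $\Ind^{G\times G}_{\Delta_\phi(G)}(M^*)$ equipped with one bimodule convention against $\Ind^{G\times G}_{\Delta_\phi(G)^{op}}(M^*)$ equipped with the other.

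Second, I would produce this remaining identification via the swap $\sigma$. Pulling back a $k[G\times G]$-module along $\sigma$ interchanges the left and right $kG$-actions on the associated bimodule, and carries $\Ind^{G\times G}_{\Delta_\phi(G)}$ to $\Ind^{G\times G}_{\Delta_\phi(G)^{op}}$. The two dual conventions in the lemma differ precisely by this interchange of left and right (together with the involution $g \mapsto g^{-1}$, which is absorbed into the contragredient). Composing with the first isomorphism yields the desired $(kG,kG)$-bimodule isomorphism, and naturality in $M$ is automatic because every intermediate step is functorial in $M$: the dual-basis construction is natural in the $kH$-module, and $\sigma$ acts by a fixed group automorphism.

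The main obstacle — really the only technical point — is the careful bookkeeping of the three ingredients (induction, the contragredient $k$-dual, and the swap $\sigma$), since the two dual conventions in the statement use different placements of the inverse map in their action formulas. Once the conventions are explicitly fixed, the isomorphism is forced and the verification reduces to chasing a single commutative square on coset representatives.
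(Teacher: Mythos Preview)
Your proposal is correct and rests on the same key observation as the paper's proof: the swap automorphism $\sigma$ of $G\times G$ exchanges $\Delta_\phi(G)$ with $\Delta_\phi(G)^{op}$ while leaving the underlying $kG$-module $M$ unchanged under the two identifications $G\cong\Delta_\phi(G)$ and $G\cong\Delta_\phi(G)^{op}$. The paper simply writes down the composite map directly as $\psi(f)\big((g_1,g_2)\otimes m\big)=f\big((g_2,g_1)\otimes m\big)$ and verifies by hand that it is well-defined, a $(kG,kG)$-bimodule homomorphism, invertible, and natural.

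Your route differs only in packaging: you factor the same map into two standard natural isomorphisms --- the compatibility $(\Ind^{G\times G}_H N)^*\cong \Ind^{G\times G}_H(N^*)$ and the pullback along $\sigma$ --- and then observe that the bimodule dual is exactly the $k[G\times G]$-contragredient composed with $\sigma^*$. This buys you naturality for free and replaces the paper's four explicit verifications with appeals to known functorial identities; the paper's bare-hands approach, on the other hand, makes the isomorphism completely explicit on elements, which is convenient when it is used in the proof of Theorem~\ref{thm:ppermsliftforpgroups}. Either way, the content is the swap, and your bookkeeping concern about the two dual conventions is exactly right: once you record that the bimodule dual equals $\sigma^*\circ(-)^{*,\mathrm{contra}}$, the argument is forced.
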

    \begin{proof}
        We first note the bimodule structure on the left- and right-hand sides of the proposed isomorphism.

        On the left, for $a,b \in G$, $(\Ind^{G\times G}_{\Delta_{\phi}(G)} M)^*$, has actions defined by:
        \begin{align*}
            a\cdot f\big((g_1,g_2)\otimes m\big) \cdot b &= (a,b\inv) \cdot f\big((g_1,g_2)\otimes m\big)\\
            &= f\big((a\inv, b)(g_1,g_2)\otimes m\big)\\
            &= f\big((a\inv g_1,bg_2)\otimes m\big),
        \end{align*}
        and on the right, $(\Ind^{G\times G}_{\Delta_{\phi}(G)^{op}}M)^*$ corresponds to the bottom left composite, that is, it has actions defined by:
        \begin{align*}
            a\cdot f\big((g_1,g_2)\otimes m\big) \cdot b &= f\big(b\cdot \big((g_1,g_2)\otimes m\big)\cdot a\big)\\
            &= f\big((bg_1,a\inv g_2)\otimes m\big).
        \end{align*}
        We define an isomorphism $\psi: (\Ind^{G\times G}_{\Delta_{\phi}(G)} M)^* \cong (\Ind^{G\times G}_{\Delta_{\phi}(G)^{op}}M)^*$ as follows:
        \begin{align*}
            \psi: (\Ind^{G\times G}_{\Delta_{\phi}(G)} M)^* &\to (\Ind^{G\times G}_{\Delta_{\phi}(G)^{op}}M)^*\\
            f&\mapsto \big((g_1,g_2)\otimes m \mapsto f((g_2,g_1)\otimes m)\big)
        \end{align*}
        Here, $(g_2,g_1)\otimes m$ as above is considered an element in $\Ind^{G\times G}_{\Delta_{\phi}(G)} M)^*$. We first check well-definedness of the mapping with respect to the tensor product. Let $f \in (\Ind^{G\times G}_{\Delta_{\phi}(G)} M)^*$ and $(g_1,g_2)\otimes m \in \Ind^{G\times G}_{\Delta_{\phi}(G)^{op}}M$. We have for any $g \in G$ that \[(g_1,g_2)\otimes m = (g_1g\inv, g_2\phi(g\inv))\otimes gm.\]
        Then for any $f \in (\Ind^{G\times G}_{\Delta_{\phi}(G)} M)^*$,
        \begin{align*}
            \psi(f)((g_1,g_2)\otimes m)&= \psi(f)((g_1g\inv, g_2\phi(g\inv))\otimes gm)\\
            &= f((g_2\phi(g\inv), (g_1g\inv)\otimes gm)\\
            &= f((g_2, g_1)\otimes m)\\
            &=  \psi(f)((g_1,g_2)\otimes m)
        \end{align*}
        Thus the map is well-defined. We next check that it is a $(kG,kG)$-bimodule homomorphism. Let $a,b \in G$.
        \begin{align*}
            \psi(a\cdot f\cdot b)((g_1, g_2)\otimes m) &= \psi (f((a\inv, b)\cdot -))((g_1, g_2)\otimes m)\\
            &= f((a\inv g_2, bg_1)\otimes m)\\
            &= \psi(f)((bg_1, a\inv g_2)\otimes m)\\
            &=(a\cdot \psi(f)\cdot b)((g_1, g_2)\otimes m)
        \end{align*}
        Therefore, $\psi$ is a $(kG,kG)$-bimodule homomorphism. It is apparent $\psi$ is an isomorphism, as its inverse is constructed similarly. Finally we must check naturality. This follows from commutativity of the following diagram, where $f: M\to N$ is a $kG$-module homomorphism.

        \begin{figure}[H]
            \centering
            \begin{tikzcd}
                (k[G\times G]\otimes_{k\Delta_\phi(G)} M)^* \ar[r, "\psi_M"] & (k[G\times G]\otimes_{k\Delta_\phi(G)^{op}} M)^*\\
                (k[G\times G]\otimes_{k\Delta_\phi(G)} N)^* \ar[r, "\psi_N"] \ar[u, "(\id \otimes f)^*"] & (k[G\times G]\otimes_{k\Delta_\phi( G)^{op}} N)^* \ar[u, "(\id \otimes f)^*"]
            \end{tikzcd}
        \end{figure}
    \end{proof}

    \begin{theorem}\label{thm:ppermsliftforpgroups}
        Let $G$ be a $p$-group and let $\gamma \in O(T^\Delta(kG,kG))$ be a $p$-permutation autoequivalence of $kG$. There exists a splendid Rickard equivalence $\Gamma$ satisfying $\Lambda(\Gamma)=\gamma.$
    \end{theorem}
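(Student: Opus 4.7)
The plan is to combine the surjectivity of the Lefschetz homomorphism (Theorem \ref{thm:surjetivityoflefschetz}) with the structural description of $p$-permutation autoequivalences of $p$-groups (Proposition \ref{prop:ppermsareorthogunits}) and the extended tensor product formula that already drove the proof of Proposition \ref{prop:ppermsareorthogunits}. The candidate $\Gamma$ will simply be the termwise induction of an endotrivial complex realizing the ``diagonal piece'' of $\gamma$.

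First I apply Proposition \ref{prop:ppermsareorthogunits} to write $\gamma = \Ind^{G\times G}_{\Delta_\phi G}(u)$ for some $\phi \in \Aut(G)$ and some orthogonal unit $u \in O(T(kG))$, identified with $O(T(k\Delta_\phi G))$ via $g\mapsto (\phi(g),g)$. By Theorem \ref{thm:surjetivityoflefschetz} the Lefschetz map $\Lambda \colon \calE_k(G) \to O(T(kG))$ is surjective, so I can choose an endotrivial complex $C \in Ch^b({}_{kG}\triv)$ with $\Lambda(C) = u$. Regarding $C$ as a complex of $k\Delta_\phi G$-modules via the same identification, I set
\[
\Gamma \,:=\, \Ind^{G\times G}_{\Delta_\phi G}(C),
\]
with induction performed in each chain degree.

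Two of the required properties come for free. The Lefschetz identity $\Lambda(\Gamma)=\gamma$ is immediate: since $\Lambda$ is additive in the chain degree and induction is exact, $\Lambda(\Gamma) = \Ind^{G\times G}_{\Delta_\phi G}(\Lambda(C)) = \Ind^{G\times G}_{\Delta_\phi G}(u) = \gamma$. Likewise, $\Gamma$ is a bounded complex of $p$-permutation $(kG,kG)$-bimodules with twisted diagonal vertices, because each $C_i$ is a $p$-permutation $kG$-module whose vertices, under induction from $\Delta_\phi G$, become twisted diagonal subgroups $\Delta_\phi P \leq G\times G$.

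The main obstacle is verifying the Rickard condition $\Gamma \otimes_{kG} \Gamma^* \simeq kG[0]$ and $\Gamma^* \otimes_{kG} \Gamma \simeq kG[0]$ in $K^b({}_{kG}\triv_{kG})$. Applying Lemma \ref{lemma:twodiffduals} termwise, with the standard sign convention on the dual of a complex, produces a bimodule isomorphism of complexes $\Gamma^* \cong \Ind^{G\times G}_{\Delta_\phi(G)^{op}}(C^*)$. Bouc's extended tensor product formula \cite[Theorem 1.1]{Bo10b}, applied in each bidegree and assembled into a chain map via the naturality built into that formula, yields
\[
\Gamma \otimes_{kG} \Gamma^* \;\cong\; \Ind^{G\times G}_{\Delta G}\bigl(C \otimes_k C^*\bigr).
\]
Since $C$ is endotrivial, $C \otimes_k C^* \simeq k[0]$ in $K^b({}_{kG}\triv)$, and because induction is additive and sends contractible complexes to contractible complexes, this homotopy equivalence transports to $\Gamma \otimes_{kG} \Gamma^* \simeq \Ind^{G\times G}_{\Delta G}(k) = kG$ in $K^b({}_{kG}\triv_{kG})$. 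The other composite is entirely symmetric. The one point requiring care is bookkeeping: checking that the termwise naturality in Lemma \ref{lemma:twodiffduals} and in Bouc's formula does assemble, with correct signs, into genuine chain isomorphisms, so that the module-level identifications lift cleanly to the chain-complex level. This is routine, but it is where all the work sits.
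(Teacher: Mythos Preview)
Your proof is correct and follows essentially the same route as the paper: write $\gamma=\Ind^{G\times G}_{\Delta_\phi G}(u)$ via Proposition \ref{prop:ppermsareorthogunits}, lift $u$ to an endotrivial complex $C$ via Theorem \ref{thm:surjetivityoflefschetz}, set $\Gamma=\Ind^{G\times G}_{\Delta_\phi G}(C)$, and verify the Rickard condition using Lemma \ref{lemma:twodiffduals} together with Bouc's extended tensor product formula to reduce $\Gamma\otimes_{kG}\Gamma^*$ to $\Ind^{G\times G}_{\Delta G}(C\otimes_k C^*)\simeq kG$. The only cosmetic difference is that the paper invokes \cite[Theorem 2.1]{R96} to avoid checking the second composite, whereas you note it is symmetric; your explicit remarks on termwise naturality and sign bookkeeping are a welcome addition but do not change the strategy.
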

    \begin{proof}
        By Proposition \ref{prop:ppermsareorthogunits}, we have $\gamma = \Ind^{G\times G}_{\Delta_\phi(G)} u$ for some $\phi \in \Aut(G)$ and $u \in O(T(kG)) \cong O(T(k[\Delta_\phi(G)]))$, After identifying $O(T(kG))$ with $B(G)^\times$, by \ref{thm:surjetivityoflefschetz}, there exists an endotrivial complex $C$ satisfying $\Lambda(C) = u$. Moreover, by additivity of induction, \[\Ind^{G\times G}_{\Delta_\phi(G)} C = \gamma = \Ind^{G\times G}_{\Delta_\phi(G)} u.\] It suffices to show $\Ind^{G\times G}_{\Delta_\phi(G)} C $ is a splendid Rickard autoequivalence of $kG$. We have:
        \begin{align*}
            (\Ind^{G\times G}_{\Delta_\phi(G)} C)\otimes_{kG} (\Ind^{G\times G}_{\Delta_\phi(G)} C)^*
            &\cong (\Ind^{G\times G}_{\Delta_\phi(G)} C)\otimes_{kG} (\Ind^{G\times G}_{\Delta_\phi(G)^{op}} C)^*\\
            \intertext{The above isomorphism is the identification in Lemma \ref{lemma:twodiffduals}.}
            &\cong (\Ind^{G\times G}_{\Delta_\phi(G)} C)\otimes_{kG} (\Ind^{G\times G}_{\Delta_\phi(G)^{op}} C^*)\\
            \intertext{This follows from the standard natural $kG$-module isomorphism $\Ind^G_H M^* \cong (\Ind^G_H M)^*$.}
            &\cong \Ind^{G\times G}_{\Delta G} (C\otimes_k C^*)\\
            \intertext{This follows from the isomorphism used in the proof of Proposition \ref{prop:ppermsareorthogunits}.}
            &\cong kG
        \end{align*}
        Thus by \cite[Theorem 2.1]{R96}, $\Ind^{G\times G}_{\Delta_\phi(G)} C$ is a splendid Rickard equivalence.
    \end{proof}

    \textbf{Acknowledgments:} The author thanks his advisor Robert Boltje for suggesting the topic of endotrivial complexes for this paper, and for many helpful and thoughtful discussions, and for introducing to him to Erg\"{u}n Yal\c{c}in. He thanks Erg\"{u}n Yal\c{c}in for enlightening discussions which led to the main classification result. He thanks the anonymous referee for their careful reading and suggestions. He thanks Paul Balmer for helpful feedback on exposition and for a rather eye-opening discussion that has led to numerous forthcoming developments. Finally, he thanks Martin Gallauer for his observation that induction of endotrivial complexes corresponds to the topological norm map, as well as numerous other stimulating and helpful discussions throughout the year. 

    \bibliography{bib}
    \bibliographystyle{alpha}

\end{document}